\newtheorem{lemma}{Lemma}[section]
\newtheorem{thm}[lemma]{Theorem}
\newtheorem*{thm*}{Theorem}
\newtheorem{prop}[lemma]{Proposition}
\newtheorem*{prop*}{Proposition}
\newtheorem{cor}[lemma]{Corollary}
\newtheorem{conj}[lemma]{Conjecture}
\theoremstyle{definition}
\newtheorem{defi}[lemma]{Definition}
\newtheorem{quest}[lemma]{Question}
\newtheorem*{quest*}{Question}
\newtheorem{example}[lemma]{Example}
\newtheorem{rem}[lemma]{Remark}
\theoremstyle{definition}
\newcommand\norm{\bBigg@{0.8}}
 \newcommand{\indnorm}[2][flex]{\csname #1l\endcsname\|#2%
                                 \csname #1r\endcsname\|\mathclose{}}
                                  \newcommand{\indnorml}[4][flex]{\csname #1l\endcsname\|#2%
                                 \csname #1r\endcsname\|_{#3}^{#4}\mathclose{}}
\newcommand{\sv}[2][flex]{\indnorm[#1]{#2}}
\newcommand{\isv}[2][norm]{\indnorml[#1]{#2}{\Z}{}}
\newcommand{\stisv}[2][norm]{\indnorm[#1]{#2}_\Z^\infty}
\newcommand{\svlf}[2][flex]{\indnorm[#1]{#2}^{\lfop}}
\DeclareMathOperator{\lfop}{lf}
\DeclareMathOperator{\cd}{cd}
\def\ltb#1#2{%
  b_{#1}^{(2)}(#2)}
\DeclareMathOperator{\catop}{cat}
\DeclareMathOperator{\Am}{Am}
\DeclareMathOperator{\amcat}{\catop_{\Am}}
\DeclareMathOperator{\lex}{\textup{Lex}}
\DeclareMathOperator{\comp}{comp}
\DeclareMathOperator{\im}{im}
\DeclareMathOperator{\id}{id}
\DeclareMathOperator{\SO}{SO}
\DeclareMathOperator{\Fill}{Fill}
\DeclareMathOperator{\Fillchi}{\Fill_\chi}
\DeclareMathOperator{\interior}{int}
\DeclareMathOperator{\ubc}{UBC}
\newcommand{\fa}[1]{%
  \forall_{#1}\quad}
\newcommand{\qand}{%
  \qquad\text{and}\qquad}
\DeclareMathOperator{\Mfd}{Mfd}
\def\Mon#1{\Mfd_{#1}^{\#}}
\DeclareMathOperator{\Cob}{Cob}
\def\Amcob#1{\Cob^{\Am}_{#1}}
\def\Gcob#1{\Cob^G_{#1}}
\DeclareMathOperator{\SV}{SV}
\newcommand{\N}{\ensuremath {\mathbb{N}}}
\newcommand{\R} {\ensuremath {\mathbb{R}}}
\newcommand{\Q} {\ensuremath {\mathbb{Q}}}
\newcommand{\Z} {\ensuremath {\mathbb{Z}}}
\renewcommand{\rho}{\varrho}
\def\phi{\varphi}
\def\args{\;\cdot\;}
\def\actson{\curvearrowright}
\def\plabel#1#2{%
  \tag{#1}%
  \label{#2}}
\def\pref#1{%
  Property~\eqref{#1}}
\long\def\forget#1{}
\def\longrightarrow{\rightarrow}
\def\emptyset{\varnothing}
\begin{document}

\title[Simplicial Volumes and Euler characteristics]{On the simplicial volume and the Euler characteristic of (aspherical) manifolds}

\author[]{Clara L\"oh}
\address{Fakult\"{a}t f\"{u}r Mathematik, Universit\"{a}t Regensburg, Regensburg, Germany}
\email{clara.loeh@ur.de}

\author[]{Marco Moraschini}
\address{Dipartimento di Matematica, Universit{\`a} di Bologna, 40126 Bologna, Italy}
\email{marco.moraschini2@unibo.it}

\author[]{George Raptis}
\address{Fakult\"{a}t f\"{u}r Mathematik, Universit\"{a}t Regensburg, Regensburg, Germany}
\email{georgios.raptis@ur.de}

\thanks{}

\keywords{}
\date{\today.\ \copyright{C.~L\"oh, M.~Moraschini and G.~Raptis}.
  This work was supported by the CRC~1085 \emph{Higher Invariants}
  (Universit\"at Regensburg, funded by the~DFG)}

\begin{abstract}
  A well-known question by Gromov asks whether the vanishing of the
  simplicial volume of oriented closed aspherical manifolds
  implies the vanishing of the Euler characteristic.
  
  We study various versions of Gromov's question and collect
  strategies towards affirmative answers and strategies towards
  negative answers to this problem. Moreover, we put Gromov's question
  into context with other open problems in low- and high-dimensional
  topology.

  A special emphasis is put on a comparative analysis of the
  additivity properties of the simplicial volume and the Euler
  characteristic for manifolds with boundary. We explain that the
  simplicial volume defines a symmetric monoidal functor (TQFT) on the \emph{amenable} cobordism category, but not on the
  whole cobordism category.  In addition, using known computations of
  simplicial volumes, we conclude that the fundamental group of the
  $4$-dimensional amenable cobordism category is not finitely
  generated.

  We also consider new variations of Gromov's question. Specifically,
  we show that counterexamples exist among aspherical spaces that are
  only homology equivalent to oriented closed connected manifolds.
\end{abstract}
\maketitle

\setcounter{tocdepth}{1}
\tableofcontents

\section{Introduction}

The simplicial volume~$\sv M$ is a homotopy invariant of oriented
compact manifolds~$M$, defined as the $\ell^1$-semi-norm of the
singular $\R$-fundamental class. The simplicial volume is proportional
to the Riemannian volume for hyperbolic manifolds and zero in the
presence of amenability.  By the Gau\ss--Bonnet theorem and F\o lner
covering towers, a similar behaviour is also exhibited by the Euler
characteristic of aspherical manifolds. But understanding the
connection between the vanishing behaviour of the simplicial volume
and the Euler characteristic of closed aspherical manifolds remains a
mystery.

In particular, the following problem by Gromov is wide open:

\begin{quest}[{\cite[p.~232]{gromovasym}}]\label{quest:gromov}
  Let $M$ be an oriented closed aspherical manifold. Does
  the following implication hold?
  \begin{equation}
    \sv M = 0  \Longrightarrow \chi(M) = 0.
    \plabel{SV$\chi$}{p:svchi}
  \end{equation}
\end{quest}

The main challenge in answering Question~\ref{quest:gromov} is to find
a common ground for the various conditions and invariants involved:
asphericity, being a closed manifold, the vanishing of the simplicial
volume, and the vanishing of the Euler characteristic.

In this article, we explore different approaches to
Question~\ref{quest:gromov}, in search of both positive and negative
examples, as well as study its connections with other open problems in
low- and high-dimensional topology.

On the one hand, a key direction pursued in this paper is the
comparative study of the additivity properties of the simplicial
volume and of the Euler characteristic, which naturally leads us to
look at the simplicial volume (and stable integral simplicial volume)
of compact manifolds with boundary (see also Section~\ref{subsec:add}
below).

On the other hand, a different direction considered in this paper is
based on the observation that if the answer to
Question~\ref{quest:gromov} is affirmative, then \pref{p:svchi} will
hold for a general class of oriented closed manifolds which are only
homology equivalent to an aspherical one (see
Remark~\ref{key_rem}). Based on this, we consider generalized versions
of Question~\ref{quest:gromov} for aspherical spaces which are only
homology equivalent to an oriented closed manifold and for oriented
closed manifolds which are only homology equivalent to an aspherical
space (see also Section~\ref{subsec:heq} below).

A quick summary of various other strategies and examples is provided
in Section~\ref{subsec:relatedwork}.

\subsection{Additivity}\label{subsec:add}

As the category of closed aspherical manifolds is difficult to handle
structurally, we extend the setup to compact aspherical manifolds with
$\pi_1$-injective boundary (Section~\ref{subsec:relative}) and
consider an (equivalent) version of Gromov's question in this
context.

\begin{quest*}[see Question~\ref{quest:gromov:relative}]
 Let $(M, \partial M)$ be an oriented compact aspherical 
 manifold with non-empty $\pi_1$-injective aspherical boundary.
 Does the following implication hold?
  \begin{equation}
    \sv{M, \partial M} = 0  \Longrightarrow \chi(M, \partial M) = 0. 
    \plabel{SV$\chi$, $\partial$}{p:svchib}
  \end{equation}
\end{quest*}

We recall some examples and sufficient conditions for the vanishing of
the simplicial volume in Section~\ref{sec:van:sv}. Moreover, we show
an extension of Gromov's vanishing theorem (Theorem
\ref{thm:grom:vanishing:thm}) to the vanishing of the relative
simplicial volume $\sv{M, \partial M}$ for oriented compact connected
manifolds $(M, \partial M)$ which admit open covers with certain
amenability properties (Theorem~\ref{thm:relvan}).  Also, in
Section~\ref{sec:sisv}, we discuss the properties of the relative
stable integral simplicial volume and the analogues of the above
questions for this invariant.

\medskip

The extension to manifolds with boundary generally allows us to compare additivity and filling
properties of the simplicial volume and of the Euler characteristic more
systematically.  For example, we show that Question~\ref{quest:gromov} is related
to Edmonds' problem (Conjecture~\ref{conj:chi:1}) as follows:

\begin{prop*}[see Proposition~\ref{lemma:conj:chi:1:vs:Gromov}]
  Suppose that the following hold:
  \begin{itemize}
  \item[(a)] Every oriented closed aspherical $3$-manifold
    with amenable fundamental group is the $\pi_1$-injective boundary of an oriented
    compact aspherical $4$-manifold~$W$ with~$\sv{W,\partial W} = 0$.
  \item[(b)] All oriented closed aspherical $4$-manifolds
    satisfy \pref{p:svchi}.
  \end{itemize}  
  Then, there exists an oriented closed aspherical
  $4$-manifold~$M$ with~$\chi(M)=1$.
\end{prop*}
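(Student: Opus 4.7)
The strategy is to build $M$ by capping off the cusps of a cusped hyperbolic $4$-manifold of Euler characteristic one with aspherical fillings produced by hypothesis~(a).

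\medskip

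First, I would invoke the existence of an orientable finite-volume cusped hyperbolic $4$-manifold of Euler characteristic one (provided, for instance, by the constructions of Ratcliffe--Tschantz). Truncating the cusps turns such a manifold into an oriented compact aspherical $4$-manifold $X$ with $\chi(X)=1$ whose boundary $\partial X = N_1\sqcup\cdots\sqcup N_k$ is a disjoint union of closed flat $3$-manifolds; each inclusion $N_i\hookrightarrow X$ is $\pi_1$-injective and each $\pi_1(N_i)$ is virtually abelian, hence amenable.

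\medskip

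Second, for every $i$ I would apply~(a) to~$N_i$ to obtain an oriented compact aspherical $4$-manifold $W_i$ with $\pi_1$-injective boundary $\partial W_i = N_i$ and $\sv{W_i,\partial W_i}=0$. Gluing each $W_i$ to~$X$ along $N_i$ (with matching orientations) produces an oriented closed $4$-manifold
\[
  M := X\cup_{N_1} W_1\cup_{N_2}\cdots\cup_{N_k} W_k.
\]
Since $X$, each $W_i$, and each $N_i$ are aspherical and the inclusions $N_i\hookrightarrow X$, $N_i\hookrightarrow W_i$ are $\pi_1$-injective, $M$ is aspherical by the standard amalgamation/tree-of-spaces argument on universal covers.

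\medskip

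Third, I would compute $\chi(M)$ by the additivity of the Euler characteristic along the boundary:
\[
  \chi(M) \;=\; \chi(X) + \sum_{i=1}^{k}\chi(W_i) - \sum_{i=1}^{k}\chi(N_i) \;=\; 1 + \sum_{i=1}^{k}\chi(W_i),
\]
using $\chi(N_i)=0$ (odd-dimensional closed manifold). To conclude that each $\chi(W_i)=0$, I form the double $DW_i := W_i\cup_{N_i} W_i$, which is closed, oriented, and aspherical by the same amalgamation argument. Amenable additivity of the simplicial volume along $N_i$ (using $\pi_1(N_i)$ amenable and $\sv{W_i,\partial W_i}=0$) yields $\sv{DW_i}=0$, so hypothesis~(b) gives $\chi(DW_i)=0$ and hence $\chi(W_i)=\tfrac{1}{2}\chi(DW_i)=0$. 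Substituting gives $\chi(M)=1$.

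\medskip

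The principal obstacle is the first step: one needs to exhibit an aspherical compact oriented $4$-manifold with $\pi_1$-injective aspherical amenable boundary realizing $\chi=1$. This nontrivial existence comes from (suitably chosen) cusped hyperbolic $4$-manifolds. Once $X$ is in hand, the remainder is a straightforward combination of the additivity of~$\chi$, amenable gluing for the simplicial volume, and the doubling trick applied together with the hypotheses~(a) and~(b).
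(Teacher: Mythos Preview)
Your proof is correct and follows the same overall scheme as the paper's: start from a seed compact aspherical $4$-manifold with $\chi=1$ and amenable $\pi_1$-injective aspherical boundary, cap off the boundary using hypothesis~(a), and then use hypothesis~(b) via doubling to show the caps contribute nothing to~$\chi$. The only substantive difference is the choice of seed: the paper uses Edmonds' explicit example (an aspherical $4$-manifold with $\chi=1$ whose boundary is a single torus bundle over~$S^1$), whereas you use a Ratcliffe--Tschantz cusped hyperbolic $4$-manifold with $\chi=1$ and flat cusp cross-sections. Both are legitimate inputs; the paper also packages your doubling step as a separate proposition (that \pref{p:svchi} in dimensions $n$ and $n-1$ implies \pref{p:svchib} in dimension~$n$), which you have simply unwound inline.
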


The notion of a topological quantum field theory (TQFT), as a
symmetric monoidal functor on the cobordism category, provides an
efficient way of encoding additivity properties. The connections
between the simplicial volume and (invertible) TQFTs are discussed in
Section~\ref{sec:tqft}.  Specifically, based on known additivity
properties, we explain in Section~\ref{subsec:tqft} that the
simplicial volume defines an invertible TQFT on a suitable
\emph{amenable cobordism category} with values in~$\R$. In addition,
we show that this functor cannot be extended to a functor on the whole
cobordism category (Proposition~\ref{prop:tqft}).  This contrasts the
additivity behaviour of the relative Euler characteristic, which is
unconditional and thus defines a TQFT on the whole
cobordism category (Remark~\ref{rem:eulerTQFT}).  We also obtain the
following result about the fundamental group of the amenable cobordism
category of $4$-manifolds and related cobordism categories (see
Section~\ref{subsec:tqft} for the precise definition of~$\Gcob d$):

\begin{thm*}[see Theorem~\ref{thm:cob}]\label{ithm:cob}
  Let $G$ be a class of amenable groups that is closed under
  isomorphisms and let $M$ be an object of~$\Gcob 4$. Then the
  group~$\pi_1(B \Gcob 4, [M])$ is \emph{not} finitely generated.
\end{thm*}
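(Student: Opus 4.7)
The plan is to use the invertible TQFT structure of the simplicial volume on $\Gcob 4$ (Section~\ref{subsec:tqft}) to produce a group homomorphism
\[
  \phi\colon \pi_1(B\Gcob 4, [M]) \longrightarrow (\R, +),
\]
and then to show that the image of $\phi$ already fails to be finitely generated; since finitely generated groups only have finitely generated homomorphic images, this forces $\pi_1(B\Gcob 4, [M])$ itself to be not finitely generated. Concretely, a loop at $[M]$ is represented by a self-cobordism $W\colon M \to M$ in $\Gcob 4$, and $\phi([W])$ is the simplicial volume of the closed oriented 4-manifold $\widehat W$ obtained by identifying the two copies of $M$ in $\partial W$ via the identity.

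For each closed oriented 4-manifold $N$, I would then consider the self-cobordism
\[
  W_N \; :=\; (M \times [0,1]) \connsum N,
\]
built by taking an interior connected sum with $N$ away from the boundary; this is still a morphism of $\Gcob 4$ because the boundary $M \sqcup \overline M$ and its $\pi_1$-injective inclusion into the cobordism are unaffected by the connected sum. The closure of $W_N$ is $(M \times S^1) \connsum N$. Because $\pi_1(M) \in G$ is amenable, so is $\pi_1(M \times S^1) \cong \pi_1(M) \times \Z$, and Gromov's vanishing theorem gives $\sv{M \times S^1} = 0$. The additivity of the simplicial volume under connected sums in dimension $\geq 3$ then yields
\[
  \phi([W_N]) \;=\; \sv{(M \times S^1) \connsum N} \;=\; \sv{M \times S^1} + \sv{N} \;=\; \sv{N}.
\]

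To finish, I would invoke known computations of simplicial volumes in dimension~$4$ to exhibit an infinite family $(N_k)_{k \in \N}$ whose simplicial volumes generate a subgroup of $(\R,+)$ that is not finitely generated. A convenient input is the realizability theorem of Heuer--L\"oh that every non-negative rational number is the simplicial volume of some closed oriented $4$-manifold: picking $N_q$ with $\sv{N_q} = 1/q$ for infinitely many primes $q$ yields a subgroup of $\Q$ with unbounded denominators, which is not finitely generated (every finitely generated subgroup of $\Q$ is cyclic). Consequently, the image of $\phi$, and hence $\pi_1(B\Gcob 4, [M])$, is not finitely generated.

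The main obstacle is supplying a rich enough family of closed $4$-manifolds whose simplicial volumes generate a non-finitely-generated subgroup of $\R$; this relies on genuine geometric input, the cleanest reference being the realizability of rational simplicial volumes by Heuer--L\"oh. The extraction of $\phi$ and the computation for $W_N$ are essentially formal consequences of the TQFT setup of Section~\ref{sec:tqft} together with the standard vanishing and additivity properties of the simplicial volume.
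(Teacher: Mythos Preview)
Your proof is correct and follows essentially the same strategy as the paper's. The paper simplifies the argument by taking the basepoint~$\varnothing$, so that a self-cobordism is just a closed $4$-manifold~$N$ with $\phi_\varnothing([N]) = \sv{N}$ (bypassing your connected sum construction~$W_N$ and the identification~$\phi([W]) = \sv{\widehat W}$), and then invokes basepoint-independence of~$\pi_1(B\Gcob 4)$; for the non-finite-generation of the image, it cites the $\Q$-linear independence result of Heuer--L\"oh for~$\SV(4)$ rather than rational realizability.
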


\subsection{Aspherical spaces homology equivalent to closed manifolds}\label{subsec:heq}

Using the Kan--Thurston theorem, we show in
Section~\ref{sec:asphericalisation} that \pref{p:svchi} fails in
general if closed aspherical manifolds are replaced by aspherical
spaces homology equivalent to closed manifolds or closed manifolds 
homology equivalent to an aspherical space (see Section~\ref{subsec:acyclicmaps}
for the definition of an acyclic map):

\begin{thm*}[see Theorem~\ref{thm:homologymfd}]
  Let $n \in \N_{\geq 2}$ be even. 
  \begin{enumerate}
  \item There exist aspherical spaces~$X$ that admit an acyclic map~$X
    \to M$ to an oriented closed connected $n$-manifold~$M$ and
    satisfy $\sv{X} = 0$ and $\chi(X) \neq 0$. In particular, these
    aspherical spaces do \emph{not} satisfy \pref{p:svchi}.
  \item There exist oriented closed connected $n$-manifolds~$M$ that
    admit an acyclic map~$X \to M$ from an aspherical space~$X$ and
    satisfy $\sv{M} = 0$ and $\chi(M) \neq 0$. In particular, these
    manifolds do \emph{not} satisfy \pref{p:svchi}.
\end{enumerate}
\end{thm*}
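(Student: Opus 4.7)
The plan is to derive both statements from a single construction that pairs a well-chosen closed manifold with its aspherical replacement via a Kan--Thurston-type acyclic map.

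For each even $n \geq 2$, I would first pick an oriented closed connected $n$-manifold $M$ with amenable fundamental group and nonzero Euler characteristic: for instance $M = S^n$ (so $\chi(M) = 2$), or $M = \mathbb{CP}^{n/2}$, or a product of even-dimensional spheres. Since $\pi_1(M)$ is amenable, Gromov's vanishing theorem gives $\sv M = 0$, while $n$ being even guarantees $\chi(M) \neq 0$. This already provides the manifold side of statement~(2), conditional on producing an aspherical $X$ equipped with an acyclic map to $M$.

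Next, I would invoke a refined form of the Kan--Thurston theorem to produce an aspherical space $X$ together with an acyclic map $f \colon X \to M$. Because $f$ induces an isomorphism on integral homology with all local coefficients, $\chi(X) = \chi(M) \neq 0$, and the real fundamental class $[M]_\R$ corresponds under $f_*$ to a distinguished class $\alpha \in H_n(X; \R)$ relative to which $\sv X = \|\alpha\|_1$ is defined. The crucial refinement is to arrange the Kan--Thurston construction so that the (perfect) kernel of $\pi_1(f) \colon \pi_1(X) \twoheadrightarrow \pi_1(M)$ is boundedly acyclic, which can be achieved by building the asphericalisation from binate groups with vanishing bounded cohomology. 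The Hochschild--Serre spectral sequence in bounded cohomology with a boundedly acyclic kernel then yields an isometric isomorphism $f^* \colon H^*_b(M; \R) \xrightarrow{\cong} H^*_b(X; \R)$; by the duality between bounded cohomology and the $\ell^1$-seminorm on singular homology, $\|\alpha\|_1 = \|[M]_\R\|_1 = \sv M = 0$, i.e.\ $\sv X = 0$.

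Both parts of the theorem now follow from the same pair $(X, M)$: for (1), $X$ is the aspherical witness with $\chi(X) \neq 0$, $\sv X = 0$, and acyclic map $X \to M$; for (2), $M$ is the manifold witness with $\chi(M) \neq 0$, $\sv M = 0$, and acyclic map from the aspherical $X$. In both cases the appropriate generalization of \pref{p:svchi} fails. The main obstacle in this plan is producing the refined Kan--Thurston construction with boundedly acyclic $\pi_1$-kernel: this relies on the existence of infinite groups that are simultaneously acyclic and boundedly acyclic (binate-type groups), which is a deep input from the recent literature. Once that input is available, the remainder is a formal consequence of functoriality of the $\ell^1$-seminorm, the mapping theorem for bounded cohomology, and duality.
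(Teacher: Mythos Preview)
Your plan is sound and would work, but it takes a genuinely different route from the paper's proof, and the difference is worth noting.

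For part~(2) your argument coincides with the paper's: pick $M$ with $\sv{M}=0$ and $\chi(M)\neq 0$ (e.g., $M=S^n$) and apply the ordinary Kan--Thurston theorem. For part~(1), however, the paper avoids the refined Kan--Thurston construction entirely. Instead, it picks $M$ admitting a self-map $h\colon M\to M$ with $|\deg h|\geq 2$ (again $M=S^n$ works) and uses the \emph{naturality} of the standard Kan--Thurston construction to lift $h$ to a self-map $H\colon X\to X$; commutativity of the square forces $|\deg H|=|\deg h|\geq 2$, and then $\sv{X}=0$ follows immediately from the degree estimate. No bounded cohomology, no boundedly acyclic kernels, no spectral sequences.

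What each approach buys: the paper's argument is strictly more elementary---it needs only the functoriality already present in the original Kan--Thurston paper and the trivial fact that a space with a self-map of degree~$\geq 2$ has vanishing simplicial volume. Your approach, by contrast, needs the deep input you flag (a Kan--Thurston model with boundedly acyclic perfect kernel, via binate-type groups), but in exchange it is more flexible: it would apply to any $M$ with $\sv{M}=0$, not only to those whose vanishing comes from a self-map of high degree. For the theorem as stated, though, the paper's shortcut via functoriality is the cleaner path, and you should be aware that the heavy machinery you invoke can be bypassed.
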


\subsection{Strategies and known examples}\label{subsec:relatedwork}

As the following examples show, the hypotheses in
Question~\ref{quest:gromov} cannot be reasonably weakened or modified
in any straightforward way:
\begin{itemize}
\item In general, non-aspherical oriented closed connected manifolds
  do not satisfy \pref{p:svchi}: For example, $\sv{S^2} = 0$,
  but~$\chi(S^2) = 2$.
\item The converse implication of \pref{p:svchi} does not hold in
  general for aspherical manifolds: For example, oriented closed
  connected hyperbolic $3$-manifolds have vanishing Euler
  characteristic, but their simplicial volume is non-zero.
\item
  In general, \pref{p:svchi} does not hold for oriented compact
  connected manifolds with non-empty boundary without imposing
  additional conditions on the inclusion of the boundary
  (Remark~\ref{rem:svchibfails}).
\item
  In general, \pref{p:svchi} does not hold for aspherical spaces that
  are only homology equivalent to oriented closed connected manifolds
  (
  Theorem~\ref{thm:homologymfd}).
\end{itemize}
We also refer to Section~\ref{sec:van:sv} for a suvey of examples of
oriented closed manifolds with vanishing or non-vanishing simplicial
volume.

\smallskip

Various strategies have been developed to handle
Question~\ref{quest:gromov}. In particular, this also led to a wide
range of positive examples:

\subsubsection*{Direct computations of both sides}

One of Gromov's original motivations to formulate
Question~\ref{quest:gromov} was the observation that the simplicial
volume and the Euler characteristic share some common vanishing
properties. Examples of this phenomenon include manifolds that admit
non-trivial self-coverings, closed aspherical manifolds with amenable
fundamental group, and closed aspherical manifolds that admit small
amenable open covers (Section~\ref{sec:van:sv}).

\subsubsection*{Boundedness properties of the Euler class}

The simplicial volume of an oriented closed connected $n$-manifold~$M$
is closely related to the comparison map from bounded cohomology to
singular cohomology in degree~$n$ (see Section~\ref{subsec:comp} and
Proposition~\ref{prop:duality}). On the other hand, the Euler
characteristic is related to the Euler class by duality. As we explain
in Proposition~\ref{prop:bounded:euler:gromov:quest}, an immediate
consequence is that \pref{p:svchi} can be reformulated in terms of the
boundedness of the Euler class.  The problem of the boundedness of the
Euler class is well studied and understood in several
cases~\cite{milnor_euler,wood_euler,ivanov_turaev,bucher_finiteness}.

\subsubsection*{$L^2$-Betti numbers}

Gromov suggested to use the fact that the Euler characteristic can be
computed as the alternating sum of the $L^2$-Betti numbers and asked the
following version of Question~\ref{quest:gromov}:

\begin{quest}[\protect{\cite[p.~232]{gromovasym}}]\label{quest:gromovL2}
  Let $M$ be an oriented closed aspherical manifold. Does the
  following implication hold?
  \begin{equation}
    \sv M = 0  \Longrightarrow  \bigl(\fa{k \in \N} \ltb k M = 0 \bigr).
    \plabel{SV$L^2$}{p:svL2}
  \end{equation}
\end{quest}

Assuming the Singer conjecture on the vanishing of the $L^2$-Betti numbers
of closed aspherical manifolds outside the middle
dimension~\cite{dodziuk}, Question~\ref{quest:gromov} and
Question~\ref{quest:gromovL2} are equivalent. More concretely,
Gromov~\cite[p.~306]{gromovmetric} proposed a definition of integral
foliated simplicial volume, involving dynamical systems, and then
\begin{itemize}
\item to establish an upper bound of the $L^2$-Betti numbers in terms of
  integral foliated simplicial volume (via Poincar\'e duality), and
\item to investigate whether the vanishing of the simplicial volume of
  closed aspherical manifolds implies the vanishing of
  integral foliated simplicial volume.
\end{itemize}
The first step has been carried out by Schmidt~\cite{Sthesis}. The
second step is an open problem, which is known to have a positive
answer in many cases, e.g., for oriented closed aspherical 
manifolds
\begin{itemize}
\item that have amenable fundamental group~\cite{FLPS},
\item that carry a non-trivial smooth $S^1$-action~\cite{FauserS1}, 
\item that are generalised graph manifolds~\cite{FFL},
\item that are smooth and have trivial minimal
  volume~\cite[(proof of) Corollary 5.4]{braunphd}.
  In particular, Question~\ref{quest:gromov} with the simplicial
  volume replaced by the minimal volume has a positive answer~\cite{sauerminvol}.
\end{itemize}
Moreover, the integral foliated simplicial volume is related to the
cost of the fundamental group~\cite{loeh_cost} and to the stable
integral simplicial volume~\cite{LP,FLPS} (Section~\ref{sec:sisv}).
In turn, the stable integral simplicial volume gives upper bounds for
homology growth, torsion homology growth~\cite{FLPS}, and the rank
gradient~\cite{loeh_rg}.

\subsubsection*{Functorial semi-norms}

If the integral foliated simplicial volume is a functorial semi-norm
on aspherical closed manifolds, then Question~\ref{quest:gromov}
has an affirmative answer~\cite[Theorem~2.2.2]{FauserT}.

\subsubsection*{Geometric positivity results} 

Conversely, it is known that many examples of closed aspherical 
manifolds with potentially non-zero Euler characteristic have positive
simplicial volume. Examples include oriented closed connected
hyperbolic manifolds~\cite{thurston,vbc}, closed manifolds with
negative sectional curvature~\cite{inoueyano}, closed irreducible
locally symmetric spaces of higher rank~\cite{lafontschmidt}, closed
manifolds with non-positive sectional curvature and sufficiently
negative intermediate Ricci curvature~\cite{connellwang}, and closed
manifolds with non-positive sectional curvature and strong enough
conditions at a single point~\cite{connellwang2}.  We refer to
Section~\ref{subsec:examples:van:sv} for further examples of manifolds
with (non-)vanishing simplicial volume.

\subsubsection*{Outlook}
Supported by the wealth of positive examples, and in view of the
existence of ``exotic'' aspherical manifolds, it seems plausible that 
Question~\ref{quest:gromov} has a positive answer in the following 
special case:

\begin{quest}
  Let $M$ be an oriented closed aspherical $n$-manifold
  whose universal covering is homeomorphic to~$\R^n$. Does $M$ satisfy
  \pref{p:svchi}?
\end{quest}

\subsection*{Organisation of the article}

In Section~\ref{sec:prelim}, we collect the definitions of simplicial
volume (Section~\ref{subsec:sv}) and bounded cohomology
(Section~\ref{subsec:comp}) as well as the duality principle which
connects these (Section~\ref{subsec:comp}).  Moreover, we discuss the
behaviour of the simplicial volume with respect to glueings
(Section~\ref{subsec:sv:glueings}) and introduce a relative version of
Question~\ref{quest:gromov} (Section~\ref{subsec:relative}).  Finally,
in Section~\ref{subsec:euler:class}, we discuss the boundedness
properties of the Euler class in connection with
Question~\ref{quest:gromov}.

Section~\ref{sec:van:sv} is mainly devoted to the vanishing of the
simplicial volume.  Some known examples are collected in
Section~\ref{subsec:examples:van:sv}.  Vanishing results for the
simplicial volume assuming the existence of amenable open covers are
recalled in Section~\ref{subsec:amen:cover:closed} and extended to manifolds with boundary in
Section~\ref{subsec:amen:cover:relative}.  Known results and open
problems about the behaviour of the simplicial volume with respect to
products are recalled in Section~\ref{subsec:products:sv} and these
are then discussed in connection with Question~\ref{quest:gromov}
(Proposition~\ref{prop:gromov:quest:products}).  Finally,
Section~\ref{subsec:filling:sv} explains a connection between
Question~\ref{quest:gromov} and a conjecture of Edmonds in
four-dimensional topology (Conjecture~\ref{conj:chi:1}) via
``fillings'' of closed manifolds.

In Section~\ref{sec:tqft}, we define the amenable cobordism category
and explain how to interpret the simplicial volume as an invertible
TQFT on this cobordism category. Using the simplicial volume, we prove 
that the fundamental group of the $4$-dimensional amenable cobordism category 
is not finitely generated (Theorem~\ref{thm:cob}). Also, using known results 
about cobordism categories, we show that the simplicial volume does not extend 
to the whole cobordism category (Proposition~\ref{prop:tqft}).

Section~\ref{sec:asphericalisation} is concerned with the study of
Question~\ref{quest:gromov} using known constructions that produce
aspherical spaces. More precisely, in
Sections~\ref{subsec:svhomologymfd}--\ref{subsec:kanthurston}, we
recall the Kan--Thurston theorem and explain how to use this to prove
the result stated in Section~\ref{subsec:heq} (see Theorem~\ref{thm:homologymfd}). Then, in
Section~\ref{subsec:other_aspher}, we briefly review known constructions of
closed aspherical manifolds, Davis' reflection group trick and Gromov's
hyperbolization, in the context of Question~\ref{quest:gromov}.

Finally, Section~\ref{sec:sisv} surveys the approach to
Question~\ref{quest:gromov} via the stable integral simplicial volume.

\subsection*{Notation}

We use~$\N = \{0,1,2, \dots \}$. We recall that aspherical spaces are assumed to be path-connected. 

\subsection*{Acknowledgements}

We would like to thank Igor Belegradek for helpful comments on
Section~\ref{subsec:other_aspher}.

\section{Simplicial volume}\label{sec:prelim}

We recall the definition of the simplicial volume, basic glueing
properties, and the role of the comparison map for bounded
cohomology. Moreover, we consider and study a relative version of
Question~\ref{quest:gromov}, and discuss the connection of
Question~\ref{quest:gromov} with (the boundedness of) the Euler class.

\subsection{Simplicial volume}\label{subsec:sv}

The simplicial volume originally appeared in Gromov's proof of Mostow
rigidity as a homotopy invariant replacement of the hyperbolic
volume~\cite{munkholm,vbc}.

\begin{defi}[Simplicial volume]
  Let $M$ be an oriented closed connected $n$-manifold. The
  \emph{simplicial volume} of~$M$ is defined as
  $$
  \sv{M} \coloneqq \sv{[M]}_1 \in \R_{\geq 0},
  $$
  where $[M] \in \, H_n(M; \R)$ is the $\R$-fundamental class of~$M$
  and $\|\cdot\|_1$ denotes the semi-norm on~$H_*(\args;\R)$, induced
  by the $\ell^1$-norm on the singular chain complex~$C_*(\args;\R)$
  with respect to the basis given by the singular simplices.
\end{defi}

\begin{defi}[Relative simplicial volume]
  Let $(M, \partial M)$ be an oriented compact connected
  $n$-manifold~$M$ with boundary~$\partial M$.  The \emph{relative
    simplicial volume} of~$(M, \partial M)$ is defined as
  $$
  \sv{M, \partial M} \coloneqq \sv{[M, \partial M]}_1 \in \R_{\geq 0},
  $$
  where $[M, \partial M] \in \, H_n(M, \partial M; \R)$ is the
  $\R$-fundamental class of~$(M, \partial M)$ and $\|\cdot\|_1$
  denotes the $\ell^1$-semi-norm on relative singular homology.
\end{defi}

In the oriented, compact, non-connected case, we define the (relative)
simplicial volume as the sum of the (relative) simplicial volumes of
the components. In particular, $\sv{\emptyset} = 0$.

\begin{rem}\label{rem:vanishing:rel:sv:implices:van:boundary}
  The boundary of a relative fundamental cycle of~$(M, \partial M)$ is
  a fundamental cycle of~$\partial M$.  This shows that for every
  oriented compact connected $n$-manifold~$M$ with non-empty
  boundary~$\partial M$, we have
  $$
  \sv{M, \partial M} \geq \frac{\sv{\partial M}}{n+1}.
  $$
  In particular, $\sv{M, \partial M} = 0$ implies $\sv{\partial M} = 0$.
  
  Note that in the case of compact $3$-manifolds better estimates
  are available~\cite{BFP3}.
\end{rem}

\begin{rem}\label{rem:sv:with:int:coeff}
  One can also define the (relative) simplicial volume with
  \emph{integral coefficients} just by working with integral singular
  homology.  More precisely, the \emph{integral (relative) simplicial
    volume} of an oriented compact connected $n$-manifold~$M$ with
  (possibly empty) boundary~$\partial M$ is defined by
  $$
  \sv{M, \partial M}_{\Z} \coloneqq \sv{[M, \partial M]_{\Z}}_1 \in \N,
  $$
  where $[M, \partial M]_\Z \in \, H_n(M, \partial M; \Z)$ is the
  $\Z$-fundamental class of~$(M, \partial M)$.  Notice that we still
  have $\sv{M, \partial M}_{\Z} \geq \sv{\partial M}_{\Z} \slash
  (n+1)$.
\end{rem}

\subsection{Simplicial volume and glueings of manifolds}\label{subsec:sv:glueings}

In general, the simplicial volume is not additive with respect to the
glueing of manifolds along submanifolds. However, (sub)additivity does
hold in the case of amenable glueings:

\begin{thm}[Simplicial volume and glueings~{{\cite{vbc, BBFIPP}}\cite[Theorem~7.6]{Frigerio}}]\label{thm:sv:additivity}
  Let $I$ be a finite set and let $(M_i, \partial M_i)_{i \in I}$ be a
  family of oriented compact connected manifolds of the same
  dimension. Assume that all the boundary components have 
  amenable fundamental group.
  Moreover, let $(M,\partial M)$ be obtained from~$(M_i,
  \partial M_i)_{i \in I}$ by a pairwise glueing (along orientation
  reversing homeomorphisms) of a set of
  boundary components. Then, we have
  \[ \sv{M,\partial M} \leq \sum_{i \in I} \sv{M_i,\partial M_i}.
  \]
  If all glued boundary components are $\pi_1$-injective in their
  original manifold, then
  \[ \sv{M,\partial M} = \sum_{i \in I} \sv{M_i,\partial M_i}.
  \]
\end{thm}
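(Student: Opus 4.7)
The plan is to establish the inequality at the chain level and to upgrade to equality in the $\pi_1$-injective case via duality with bounded cohomology. The enabling principle throughout is the vanishing of bounded cohomology of amenable groups, which yields both a uniform boundary condition (UBC) on $\ell^1$-chains and an isometric restriction theorem for relative bounded cohomology.

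For the inequality, fix $\epsilon > 0$ and, for each $i$, choose a relative fundamental cycle $c_i \in C_n(M_i;\R)$ with $\|c_i\|_1 \leq \sv{M_i, \partial M_i} + \epsilon/|I|$. The boundary $\partial c_i \in C_{n-1}(\partial M_i;\R)$ is a fundamental cycle of $\partial M_i$. On each glued pair of boundary components $N^j \subset \partial M_{i_j}$ $(j=1,2)$ identified via an orientation-reversing homeomorphism $\phi$, the pieces $(\partial c_{i_1})|_{N^1}$ and $\phi^*(\partial c_{i_2})|_{N^2}$ are two fundamental cycles of $N^1$ whose signed sum is null-homologous. Using that $\pi_1(N^1)$ is amenable, so that $\sv{N^1} = 0$ and the UBC holds in degree $n-1$, one can prescribe a priori a fundamental cycle $z_{N^1}$ of arbitrarily small $\ell^1$-norm on each glued component and then modify each $c_i$ by chains supported in a collar of the boundary (whose norms are controlled by the UBC constant times $\|z_{N^1}\|_1$) so that its boundary restrictions on the glued components become $\pm z_{N^1}$ and cancel exactly after the identification. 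The resulting chain $\sum_i \widetilde c_i \in C_n(M;\R)$ is then a relative fundamental cycle of $(M, \partial M)$ of norm at most $\sum_i \sv{M_i, \partial M_i} + \epsilon + O(\epsilon)$; letting $\epsilon \to 0$ yields the claimed inequality.

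For the equality under $\pi_1$-injective glueing, I would dualize via the duality principle recalled in Section~\ref{subsec:comp}: $\sv{M,\partial M}$ is the reciprocal of the infimal $\ell^\infty$-semi-norm of a relative bounded $n$-cocycle pairing to $1$ with the fundamental class. Since each glued boundary component is amenable and $\pi_1$-injective in its original manifold, Gromov's mapping theorem for relative bounded cohomology provides an \emph{isometric} Mayer--Vietoris-type isomorphism
\[
  H_b^n(M, \partial M;\R) \cong \bigoplus_{i \in I} H_b^n(M_i, \partial M_i;\R),
\]
under which the class dual to $[M,\partial M]$ decomposes as the direct sum of the classes dual to $[M_i,\partial M_i]$. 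Matching $\ell^\infty$-semi-norms on the two sides and passing to reciprocals gives the reverse inequality, and hence the equality.

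The main obstacle is the quantitative chain-level control in the subadditivity step: a naive application of UBC to correct a boundary mismatch loses a multiplicative constant $K_n$ that would prevent sharpness. The correct route is to exploit the full strength of amenability to prescribe the boundary fundamental cycles to have arbitrarily small $\ell^1$-norm before assembling the $c_i$, which requires a careful use of the relative UBC in the amenable setting. For the equality, the delicate point is to verify that the Mayer--Vietoris decomposition is \emph{isometric} with respect to the $\ell^\infty$-semi-norm, rather than merely a bounded isomorphism; $\pi_1$-injectivity is essential here, as it is what allows Gromov's mapping theorem to identify bounded cohomology with that of the groups and thus force the norms to split additively over the pieces.
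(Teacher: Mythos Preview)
The paper does not give its own proof of this theorem; it is quoted from the literature (Gromov, Bucher--Burger--Frigerio--Iozzi--Pagliantini--Pozzetti, and Frigerio's book). Your outline follows the standard route in those references: a chain-level matching argument for subadditivity, and duality with relative bounded cohomology for the reverse inequality. The second half is essentially correct; the isometric control you invoke is exactly Gromov's equivalence/isometry theorem for pairs with amenable $\pi_1$-injective boundary, as developed in the cited sources.

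There is, however, a genuine imprecision in your subadditivity step. You claim the correction chain has norm ``controlled by the UBC constant times $\|z_{N^1}\|_1$''. It does not: UBC bounds the filling by $K$ times the norm of the \emph{difference} $(\partial c_i)|_{N^1} - z_{N^1}$, and $\|(\partial c_i)|_{N^1}\|_1$ can be as large as $(n+1)\|c_i\|_1$, regardless of how small $z_{N^1}$ is. Choosing $z_{N^1}$ small does nothing by itself. What is actually needed---and what you only allude to in your final paragraph---is Gromov's equivalence theorem: when the boundary components have amenable fundamental group, the relative simplicial volume~$\sv{M_i,\partial M_i}$ may be computed using relative fundamental cycles~$c_i$ whose \emph{boundaries} already have arbitrarily small $\ell^1$-norm. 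Only after invoking this can the UBC correction be made small. You should name this step explicitly and place it before the matching, rather than presenting the small-boundary condition as a consequence of prescribing~$z_{N^1}$.
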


\begin{rem}
  Theorem~\ref{thm:sv:additivity} allows that boundary components of
  the given manifolds~$(M_i,\partial M_i)$ are glued to boundary
  components of the same manifold~$(M_i,\partial M_i)$; i.e.,
  self-glueings are included.  Furthermore, not all boundary
  components need to be glued (so that some of the components remain
  boundary components of~$M$).
\end{rem}

\begin{rem}\label{ex:additivity:Euler:characteristic}
  It is well-known that the Euler characteristic is \emph{always}
  additive with respect to glueings: given oriented compact connected
  $n$-manifolds $(M, \partial M)$ and~$(N, \partial N)$ with
  homeomorphic (or just homotopy equivalent) boundary components $M_1
  \subseteq \partial M$ and~$N_1 \subseteq \partial N$, we set $Z = M
  \cup_{M_1 \cong N_1} N$. Then we have:
  $$
  \chi(Z) = \chi(M) + \chi(N) - \chi(M_1 \cong N_1) 
  $$
  and similarly:
  $$
  \chi(Z, \partial Z) = \chi(M, \partial M) + \chi(N, \partial N) + \chi(M_1 \cong N_1).
  $$
  Here $\chi(W, \partial W) \coloneqq \chi(W) - \chi(\partial W)$ denotes the \emph{relative}
  Euler characteristic of the compact manifold $(W, \partial W)$.

  Assuming that $M_1$ is aspherical and has amenable fundamental
  group, then both the simplicial volume and the Euler characteristic
  of~$M_1$ vanish (Example~\ref{ex:zero:simplicial:volume}(1) and
  Theorem~\ref{thm:van:thm:euler:characteristic}).  In particular, the
  last formula simplifies in this case to a formula analogous to the
  one in Theorem~\ref{thm:sv:additivity}:
  $$
  \chi(Z, \partial Z) = \chi(M, \partial M) + \chi(N, \partial N).
  $$
\end{rem}

\begin{example}[Doubles]\label{exa:double:sv}
  Given an oriented compact connected manifold~$M$ with non-empty
  boundary~$\partial M$, we define the \emph{double} of~$M$ to be
  $$
  D(M) \coloneqq M \cup_{\partial M \cong \partial (-M)} -M,
  $$
  where $-M$ denotes a copy of~$M$ with the opposite orientation.  It
  is easily seen that we always have subadditivity of the simplicial
  volume in this case:
  $$
  \sv{D(M)} \leq 2 \cdot \sv{M, \partial M}.
  $$
  Indeed, given a relative fundamental cycle~$c$ of~$M$, we can
  set~$\overline {c}$ to be the relative fundamental cycle of~$-M$
  corresponding to~$-c$.  Then, $c' = c + \overline{c}$ is in fact a
  fundamental cycle of~$D(M)$ with norm
  $$
  |c'|_1 \leq |c|_1 + |\overline{c}|_1 = 2 \cdot |c|_1.
  $$ 
  Then the subadditivity of the simplicial volume follows from taking
  the infimum over all such~$c$.  The same computation also works for
  integral coefficients.
\end{example}

\begin{rem}[Doubles and asphericity]\label{rem:double:aspherical}
  In general, the double of an oriented compact aspherical
  manifold with boundary is not necessarily aspherical; prototypical
  examples of this kind are the $3$-ball or~$S^1 \times D^2$.

  Let $(M, \partial M)$ be a compact $n$-manifold, where $M$ is
  aspherical and $\partial M$ is connected, and let $F$ denote the
  homotopy fibre of the inclusion~$\partial M \subset M$. For
  simplicity, we write $H := \pi_1(\partial M, x)$ and $G:= \pi_1(M,
  x)$ and denote by $\iota \colon H \to G$ the induced homomorphism.
  Moreover, let $G' := \mathrm{im}(\iota)$ be the image of~$\iota$,
  and consider the corresponding diagram
  $$
  \xymatrix{
    K(G,1) \ar@{=}[d] & \partial M \ar[r] \ar[l] \ar[d]^{q} & K(G,1) \ar@{=}[d] \\
    K(G,1) & K(G',1) \ar[r] \ar[l] & K(G,1)
  }
  $$
  induced by~$\iota$ and the inclusion maps. Suppose that the
  (homotopy) pushout
  $$D(M) = M \cup_{\partial M} M \simeq K(G,1)
  \stackrel{h}{\cup}_{\partial M} K(G,1)$$ is aspherical.  Then the
  induced map between the homotopy pushouts
  $$g \colon K(G,1) \stackrel{h}{\cup}_{\partial M} K(G,1) \to K(G,1) \stackrel{h}{\cup}_{K(G',1)} K(G,1)$$
  is a homotopy equivalence by the Seifert--van~Kampen
  theorem. Passing to the homotopy fibres of the diagram above, regarded as
  a diagram over~$K(G,1)$, we obtain the following diagram (up to
  canonical homotopy equivalence)
  $$
  \xymatrix{
    \ast \ar@{=}[d] & F \ar[r] \ar[l] \ar[d]^{q'} & \ast \ar@{=}[d] \\
    \ast & D \ar[r] \ar[l] & \ast
  }
  $$
  where $D$ is discrete with cardinality equal to the
  index~$[G:G']$. We recall that the homotopy fibre of a map from a
  homotopy pushout is canonically identified up to homotopy
  equivalence with the homotopy pushout of the respective homotopy
  fibres.  Thus, since $g$ is a homotopy equivalence, the induced map
  $$g' \colon \Sigma F \to \Sigma D,$$ 
  between the homotopy fibres of the source and target of~$g$ as
  spaces over $K(G,1)$, where $\Sigma$ denotes here the (unreduced)
  suspension, is again a homotopy equivalence.  Therefore, $\pi_0(F)
  \cong [G:G']$ and each path-component of~$F$ must have trivial
  integral homology.  In addition, the homotopy fibre of $$q \colon
  \partial M \to K(G',1)$$ is identified with a path-component of~$F$,
  so the map~$q$ is acyclic (see Section~\ref{subsec:acyclicmaps}). In
  particular, $q$ is an integral homology equivalence and arises as
  the plus construction associated to the kernel of~$\iota$. As a
  consequence, if $\iota$ is injective, then $q \colon \partial M \to
  K(G',1)$ is a homotopy equivalence, so $\partial M$ is again
  aspherical. Conversely, it is well known that the double is
  aspherical if $M$ and $\partial M$ are aspherical and $\iota$ is
  injective.

  On the other hand, we do not know if the injectivity of~$\iota$ is
  necessary for the asphericity of the double.
\end{rem}

An interesting example of amenable glueings is given by connected
sums:

\begin{prop}[{\cite{vbc}\cite[Corollary~7.7]{Frigerio}}]\label{prop:connected:sum}
  Let $n \geq 1$ and let $M$ and $N$ be oriented closed connected
  $n$-manifolds. The following hold:
  \begin{enumerate}
  \item $\chi(M \# N) = \chi(M) + \chi(N) - \chi(S^{n})$;
  \item If $n \geq 3$, then $\sv{M \# N} = \sv{M} + \sv{N}$.
  \end{enumerate}
  In particular, if $n \geq 3$, we have:
  \begin{enumerate}
  \item[(3)] If $n$ is even and $\chi(M) = 0 = \chi(N)$, then $\chi(M
    \# N) \neq 0$;
  \item[(4)] If $\sv{M} = 0 = \sv{N}$, then $\sv{M \# N} = 0$.
  \end{enumerate}
\end{prop}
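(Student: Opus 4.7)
The plan is to treat (1) and (2) separately, with (3) and (4) following immediately.

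For (1), I would apply Mayer--Vietoris additivity of the Euler characteristic. Write $M \# N = M_0 \cup_{S^{n-1}} N_0$, where $M_0 \coloneqq M \setminus \interior B^n$ and $N_0 \coloneqq N \setminus \interior B^n$. The decomposition $M = M_0 \cup B^n$ with intersection homotopy equivalent to $S^{n-1}$ yields $\chi(M_0) = \chi(M) - 1 + \chi(S^{n-1})$, and symmetrically for $N_0$. Combining,
\[ \chi(M \# N) = \chi(M_0) + \chi(N_0) - \chi(S^{n-1}) = \chi(M) + \chi(N) + \chi(S^{n-1}) - 2. \]
Since one of $n$ or $n-1$ is odd, we have $\chi(S^n) + \chi(S^{n-1}) = 2$, which rearranges to the claimed formula.

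For (2), the strategy is to apply Theorem~\ref{thm:sv:additivity} twice. Since $n \geq 3$, $S^{n-1}$ is simply connected, so any gluing along $S^{n-1}$ is automatically amenable and $\pi_1$-injective. The equality case of Theorem~\ref{thm:sv:additivity}, applied to $M \# N = M_0 \cup_{S^{n-1}} N_0$, yields
\[ \sv{M \# N} = \sv{M_0, \partial M_0} + \sv{N_0, \partial N_0}. \]
It remains to identify $\sv{M_0, \partial M_0}$ with $\sv M$ (and analogously for $N_0$). Applying Theorem~\ref{thm:sv:additivity} again to $M = M_0 \cup_{S^{n-1}} D^n$ gives
\[ \sv M = \sv{M_0, \partial M_0} + \sv{D^n, S^{n-1}}, \]
so it suffices to show $\sv{D^n, S^{n-1}} = 0$. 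This follows from the standard cone trick: $\sv{S^{n-1}} = 0$ for $n \geq 2$ (by simple connectedness), and coning any fundamental cycle of $S^{n-1}$ to the center of $D^n$ produces a relative fundamental cycle of $(D^n, S^{n-1})$ with the same $\ell^1$-norm.

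Parts (3) and (4) are then immediate. For $n \geq 3$ even, $\chi(S^n) = 2$, so (1) reduces to $\chi(M \# N) = \chi(M) + \chi(N) - 2 = -2 \neq 0$; and (4) is tautological from (2).

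The only substantive technical point is the vanishing $\sv{D^n, S^{n-1}} = 0$, which the cone construction dispatches cleanly. The hypothesis $n \geq 3$ in (2) is essential precisely because when $n = 2$ the inclusion $\pi_1(S^{n-1}) = \mathbb{Z} \to \pi_1(D^n) = 1$ fails to be injective, so the equality version of Theorem~\ref{thm:sv:additivity} need not apply there.
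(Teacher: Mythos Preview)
Your proof is correct. The paper does not supply its own argument for this proposition; it merely cites Gromov and Frigerio, and your approach via Theorem~\ref{thm:sv:additivity} is exactly the standard derivation (and is how Frigerio obtains his Corollary~7.7). One cosmetic slip: you write that $\sv{S^{n-1}} = 0$ for $n \geq 2$ ``by simple connectedness'', but $S^1$ is not simply connected; since part~(2) assumes $n \geq 3$ this does not affect anything, but you may want to phrase it as ``since $S^{n-1}$ has amenable fundamental group'' or simply restrict to $n \geq 3$.
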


\begin{rem}\label{rem:connected:sum:not:asph}
  Note that the connected sum of aspherical manifolds in
  dimension~$\geq 3$ is never
  aspherical~\cite[Lemma~3.2]{lueck_aspherical}. Thus,
  Proposition~\ref{prop:connected:sum} cannot be used to produce
  counterexamples to Question~\ref{quest:gromov}.
\end{rem}

\begin{example}
  The formula in Proposition~\ref{prop:connected:sum}(2) fails in
  dimension~$2$. For example, hyperbolic surfaces have non-zero
  simplicial volume (Example~\ref{ex:pos:sim:vol}(1)) but the
  two-dimensional torus has zero simplicial volume
  (Example~\ref{ex:zero:simplicial:volume}.(1)).
\end{example}

\subsection{A relative version of Gromov's question}\label{subsec:relative}

We consider the following version of Question~\ref{quest:gromov} for manifolds
with boundary and show that it is a consequence of \pref{p:svchi} 
(Proposition~\ref{prop:gromov:implies:rel:gromov}).

\begin{quest}\label{quest:gromov:relative}
  Let $(M, \partial M)$ be an oriented compact aspherical
  manifold with non-empty $\pi_1$-injective aspherical boundary.
  Does the following implication hold?
  \begin{equation}
    \sv{M, \partial M} = 0  \Longrightarrow \chi(M, \partial M) = 0. 
    \plabel{SV$\chi$, $\partial$}{p:svchib'}
  \end{equation}
\end{quest}

\begin{rem}\label{rem:eulerrel}
  For every oriented compact connected $n$-manifold~$M$ with
  boundary~$\partial M$, we have $|\chi(M)| = |\chi(M, \partial M)|.$
  Indeed, when $n$ is even, we know that $\chi(\partial M) = 0$, so
  $\chi(M, \partial M) = \chi(M) - \chi(\partial M) = \chi(M).$ On the
  other hand, when $n$ is odd, we have $\chi(\partial M) = 2 \cdot
  \chi(M)$, so $\chi(M, \partial M) = \chi(M) - \chi(\partial M) =
  -\chi(M).$
\end{rem}

In order to disprove \pref{p:svchi} it suffices to find an example
that does not satisfy \pref{p:svchib}:

\begin{prop}\label{prop:gromov:implies:rel:gromov}
  Let $n \geq 1$. If all oriented closed aspherical
  manifolds of dimension~$n$ or~$n-1$ satisfy \pref{p:svchi}, then all
  oriented compact aspherical $n$-manifolds with non-empty
  $\pi_1$-injective aspherical boundary satisfy \pref{p:svchib}.
\end{prop}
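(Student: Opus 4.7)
The plan is to reduce to the closed case by passing to the double $D(M) = M \cup_{\partial M} (-M)$, which under our hypotheses is an oriented closed aspherical $n$-manifold with vanishing simplicial volume, and then to apply \pref{p:svchi} to it.

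First I would reduce to the case where $M$ is connected, using that both $\sv{\cdot,\cdot}$ and $\chi$ are additive over connected components (so $\sv{M,\partial M} = 0$ forces the analogous vanishing on each component). Then I would verify that $D(M)$ is aspherical: since $M$ and each component of $\partial M$ are aspherical and $\partial M \hookrightarrow M$ is $\pi_1$-injective, the asphericity of the double follows from the well-known fact recalled at the end of Remark~\ref{rem:double:aspherical}. By Example~\ref{exa:double:sv} we have $\sv{D(M)} \leq 2 \cdot \sv{M, \partial M} = 0$, so \pref{p:svchi} applied to the closed aspherical $n$-manifold $D(M)$ gives $\chi(D(M)) = 0$.

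It remains to translate this into the vanishing of $\chi(M, \partial M)$. By the additivity formula in Remark~\ref{ex:additivity:Euler:characteristic},
\[
\chi(D(M)) = 2\chi(M) - \chi(\partial M).
\]
If $n$ is even, then $\chi(\partial M) = 0$ automatically since $\partial M$ is closed and odd-dimensional. If $n$ is odd, then $\partial M$ is a closed aspherical $(n-1)$-manifold with $\sv{\partial M} = 0$ by Remark~\ref{rem:vanishing:rel:sv:implices:van:boundary}, so the hypothesis \pref{p:svchi} in dimension $n-1$ forces $\chi(\partial M) = 0$. In both cases $2\chi(M) = 0$, and Remark~\ref{rem:eulerrel} yields $\chi(M, \partial M) = 0$. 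The only step requiring genuine input is the asphericity of $D(M)$, which is precisely where the $\pi_1$-injectivity hypothesis is essential; the rest of the argument merely combines the subadditivity of the double from Example~\ref{exa:double:sv} with the additivity of $\chi$.
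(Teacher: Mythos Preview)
Your proof is correct and follows essentially the same route as the paper: pass to the double $D(M)$, use its asphericity (from $\pi_1$-injectivity) and the subadditivity $\sv{D(M)} \leq 2\sv{M,\partial M}$ to get $\chi(D(M))=0$ via \pref{p:svchi}, then combine with $\chi(\partial M)=0$ to conclude. The only cosmetic difference is that you split into the cases $n$ even/odd (getting $\chi(\partial M)=0$ for free when $n$ is even), whereas the paper uniformly invokes \pref{p:svchi} in dimension~$n-1$ after noting $\sv{\partial M}=0$; both arrive at the same computation $\chi(D(M)) = 2\chi(M) - \chi(\partial M)$.
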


\begin{proof}
  Let $(M,\partial M)$ be an oriented compact aspherical
  $n$-manifold with non-empty $\pi_1$-injective aspherical
  boundary. Then the double~$D(M) := M \cup_{\partial M} M$ is an
  oriented closed aspherical $n$-manifold
  (Remark~\ref{rem:double:aspherical}) and $\sv{D(M)} \leq 2 \cdot
  \sv{M,\partial M}$ (Example~\ref{exa:double:sv}).

  Suppose $\sv{M,\partial M} =0$. Then $\sv{D(M)} = 0$ and
  $\sv{\partial M} = 0$
  (Remark~\ref{rem:vanishing:rel:sv:implices:van:boundary}).  From
  \pref{p:svchi} in dimension~$n$ and $n-1$, respectively, we conclude
  \[ \chi\bigl(D(M)\bigr) = 0
  \qand
  \chi(\partial M) = 0.
  \]
  Therefore, we compute
  \begin{align*}
    \chi(M,\partial M)
    & = \chi(M) - \chi(\partial M)
    = \frac12 \cdot \bigl(\chi(D(M)) + \chi(\partial M)\bigr) - \chi(\partial M)
    = 0.
  \end{align*}
  Hence, $(M,\partial M)$ satisfies \pref{p:svchib}.
\end{proof}

\begin{example}\label{rem:svchibfails}
  Note that \pref{p:svchib} does not hold for all oriented compact
  aspherical manifolds without the $\pi_1$-injectivity
  condition on the boundary. For example, if we take an oriented
  closed connected hyperbolic even-dimensional manifold~$N$ and let $M
  := N \times D^2$, then $M$ and $\partial M = N \times S^1$ are
  aspherical and $\sv{M, \partial M} = 0$
  (Proposition~\ref{prop:sv:products} or
  Example~\ref{ex:zero:simplicial:volume}(4)).  On the other hand, we
  have $\chi(M, \partial M) = \chi(N) \cdot \chi(D^2, S^1) = \chi(N)
  \neq 0$.
\end{example}

\begin{rem}
  Note that by allowing the case of empty boundary,
  Proposition~\ref{prop:gromov:implies:rel:gromov} can be formulated
  as an equivalence between Question~\ref{quest:gromov:relative} and
  Question~\ref{quest:gromov}. The extension of Gromov's question to
  manifolds with boundary allows us in particular to explore
  Question~\ref{quest:gromov} by studying the properties of (vanishing
  of) the simplicial volume and the Euler characteristic along
  glueings of manifolds and compare their respective additivity
  properties. This viewpoint will also be explored in
  Section~\ref{sec:tqft}.
\end{rem}

\subsection{The comparison map}\label{subsec:comp}

Dually, simplicial volume can be expressed in terms of bounded
cohomology.  \emph{Bounded cohomology}
\[ H_b^*(\args;\R) := H^*(C_b^*(\args;\R))
\]
is the cohomology of the topological dual~$C_b^*(\args;\R)$ of the
singular chain complex, where the dual is taken with respect to the
$\ell^1$-norm. 
Bounded cohomology is then endowed with the $\ell^\infty$-seminorm,
denoted by $\| \cdot \|_\infty$.

The inclusion~$C_b^*(\args;\R) \hookrightarrow
C^*(\args;\R)$ induces a natural transformation
\[ \comp^* \colon H_b^*(\args;\R) \Longrightarrow H^*(\args;\R),
\]
which is called the \emph{comparison map}. A straightforward
application of the Hahn--Banach Theorem shows:

\begin{prop}[Duality principle~\cite{vbc}]\label{prop:duality}
  Let $(X,A)$ be a pair of spaces, let $k \in \N$, and let $\alpha\in
  H_k(X,A;\R)$.  Then
  \[ \|\alpha\|_1 = \sup \bigl\{\bigl| \langle \comp_{X,A}^k(\varphi), \alpha\rangle\bigr|
  \bigm| \varphi\in H^k_b(X,A;\R), \, \|\varphi\|_\infty \leq 1
  \bigr\}.
  \]
  In particular: If $(M, \partial M)$ is an oriented compact connected
  $n$-manifold with (possibly empty) boundary, then $\sv{M, \partial
    M}$ is the operator norm of the composition
  \[H^n_b(M, \partial M ; \R) \xrightarrow{\comp^n_{M, \partial M}}
    H^n(M, \partial M; \R) \stackrel{\cap [M, \partial M]}{\cong} \R
  \]
  and
  \[
  \comp^n_{(M, \partial M)} \mbox{ is surjective }
  \quad \Longleftrightarrow \quad
  \sv{M, \partial M} > 0.
  \]
\end{prop}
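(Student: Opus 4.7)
The plan is to prove the first (general) statement by a standard Hahn--Banach argument on the normed chain complex, and then deduce the manifold statement by identifying $H^n(M,\partial M;\R)$ with $\R$ via the fundamental class.

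For the first statement, I would start by observing that the chain group~$C_k(X,A;\R)$ carries the $\ell^1$-norm with the singular simplices (not in~$A$) as an unconditional basis, and that~$C^k_b(X,A;\R)$ is its topological dual with the~$\ell^\infty$-norm, since $\|\varphi\|_\infty = \sup\{|\varphi(c)|: c\in C_k(X,A;\R),\ |c|_1\leq 1\}$. The easy inequality~$\geq$ is then immediate: given~$\varphi\in H^k_b(X,A;\R)$ with~$\|\varphi\|_\infty\leq 1$ and any relative cycle~$c$ representing~$\alpha$, a bounded cocycle representative~$\widehat\varphi$ (with $\|\widehat\varphi\|_\infty$ close to~$\|\varphi\|_\infty$) yields $|\langle \comp^k_{X,A}(\varphi),\alpha\rangle| = |\widehat\varphi(c)| \leq \|\widehat\varphi\|_\infty\cdot |c|_1$, and taking the infimum over~$c$ and the supremum over~$\widehat\varphi$ gives the claim.

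For the non-trivial inequality~$\leq$, let $Z_k$ be the space of relative cycles, $B_k$ the space of relative boundaries, and consider the quotient normed space $Z_k/\overline{B_k}$; the image of~$\alpha$ has norm exactly~$\|\alpha\|_1$ (this is where one has to be careful that the semi-norm on homology is computed by passing to the closure of boundaries in the quotient; however, since we only need~$|c|_1\leq \|\alpha\|_1+\varepsilon$ for some representative, one argues directly). On the one-dimensional subspace spanned by the class of a chosen representative cycle~$c$, define a linear functional of operator norm~$1/|c|_1$ sending~$[c]$ to~$1$. By the Hahn--Banach theorem, extend it to a bounded linear functional~$\psi$ on~$C_k(X,A;\R)$ (equivalently an element of~$C^k_b(X,A;\R)$) of norm~$\leq 1/|c|_1$ that annihilates~$B_k$; by construction~$\psi$ is a cocycle, defines a class~$[\psi]\in H^k_b(X,A;\R)$ with $\|[\psi]\|_\infty\leq 1/|c|_1$, and $\langle \comp^k_{X,A}([\psi]),\alpha\rangle = 1$. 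Renormalising and letting~$|c|_1\to \|\alpha\|_1$ yields a sequence of bounded classes of norm~$\leq 1$ whose pairings with~$\alpha$ converge to~$\|\alpha\|_1$. The main subtlety here is the usual one about $Z_k\cap \overline{B_k}$: one must argue on a representative~$c$ directly rather than on the equivalence class to avoid closure issues.

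For the manifold statement, apply the general duality to~$\alpha=[M,\partial M]\in H_n(M,\partial M;\R)$. Lefschetz duality identifies $H^n(M,\partial M;\R)\cong \R$ via the cap product with~$[M,\partial M]$, under which $\langle \comp^n_{M,\partial M}(\varphi),[M,\partial M]\rangle$ is exactly the real number corresponding to~$\comp^n_{M,\partial M}(\varphi)$. Hence the first part of the proposition says precisely that the operator norm of $\cap[M,\partial M]\circ \comp^n_{M,\partial M}\colon H^n_b(M,\partial M;\R)\to \R$ equals $\sv{M,\partial M}$. Since the target is one-dimensional and the Lefschetz isomorphism is an isomorphism, the composition is surjective if and only if it is non-zero, if and only if its operator norm is positive, i.e.\ if and only if $\sv{M,\partial M}>0$. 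The main obstacle in the whole argument is really just the Hahn--Banach step above, where one has to be meticulous about the norm on the quotient; everything else is formal.
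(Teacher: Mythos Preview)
Your approach via Hahn--Banach is exactly what the paper indicates (it does not give a proof, merely stating that the result is ``a straightforward application of the Hahn--Banach Theorem'' and citing Gromov). One small correction: to obtain a cocycle you must define the initial functional on $B_k + \R c$ (sending $B_k$ to~$0$ and $c$ to~$1$), not just on~$\R c$; its norm on that subspace is then $1/\operatorname{dist}(c,B_k)=1/\|\alpha\|_1$ rather than $1/|c|_1$, and Hahn--Banach gives an extension of the same norm that annihilates~$B_k$ by construction---this also makes your limiting step in~$|c|_1$ unnecessary.
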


\begin{prop}\label{prop:cup}
  Let $M$ be an oriented closed connected $n$-manifold such that
  $\sv{M}=0$. Suppose that $x \in H^k(M; \R)$ is bounded
  (i.e. $x$ lies in the image of the comparison map $\comp^k_M$) 
   and let $x^*
  \in H^{n-k}(M; \R)$ be such that $x \cup x^* \neq 0$. Then $x^*$ is
  not bounded.
\end{prop}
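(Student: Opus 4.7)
The plan is to combine the duality principle (Proposition~\ref{prop:duality}) with the multiplicativity of the comparison map. First, since $M$ is an oriented closed connected $n$-manifold, $H^n(M;\R)\cong\R$, and Proposition~\ref{prop:duality} tells us that the composition
\[ H^n_b(M;\R) \xrightarrow{\comp^n_M} H^n(M;\R) \xrightarrow{\cap [M]} \R \]
has operator norm equal to $\sv M = 0$. Since $\cap [M]$ is an isomorphism and every class in~$H^n_b(M;\R)$ has finite $\ell^\infty$-seminorm, this forces $\comp^n_M$ itself to be the zero map. In particular, \emph{no nonzero class in~$H^n(M;\R)$ is bounded}.

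Second, I would argue by contradiction: assume that $x^*\in H^{n-k}(M;\R)$ is also bounded. The cup product of bounded singular cochains is again bounded, since $\|c\cup c'\|_\infty \leq \|c\|_\infty\cdot\|c'\|_\infty$. Hence the singular cup product restricts to a cup product on~$C_b^*(M;\R)$, which descends to bounded cohomology and is compatible with the ordinary cup product via~$\comp^*_M$. Therefore, choosing bounded representatives for $x$ and~$x^*$, their product at the cochain level provides a bounded representative for the class~$x\cup x^*\in H^n(M;\R)$, showing that this nonzero class lies in the image of~$\comp^n_M$.

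Combining the two steps yields the desired contradiction, so~$x^*$ is not bounded. I expect no significant obstacles in this argument; the only subtle ingredient is the compatibility of cup products with the comparison map, which is standard and follows directly from the fact that the Alexander--Whitney cup product formula is defined at the cochain level and respects the $\ell^\infty$-seminorm.
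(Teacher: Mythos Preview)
Your proof is correct and follows essentially the same approach as the paper's: both argue by contradiction, lift $x$ and $x^*$ to bounded cohomology, use that the cup product on bounded cochains is compatible with the comparison map, and then invoke Proposition~\ref{prop:duality} to derive a contradiction from the nonvanishing of $x\cup x^*$ in degree~$n$. The only cosmetic difference is that you phrase the conclusion from $\sv M=0$ as ``$\comp^n_M$ is the zero map'' whereas the paper phrases the contradiction as ``$\comp^n_M$ is surjective''; since $H^n(M;\R)\cong\R$, these are equivalent.
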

\begin{proof}
  Let $y \in H^k_b(M; \R)$ be a class with~$\comp^k_M(y) =
  x$. \emph{Assume} for a contradiction that $x^* \in H^{n-k}(M;\R)$
  lies in the image of~$\comp^{n-k}_M$, that is, there is~$z \in
  H^{n-k}_b(M; \R)$ with~$\comp^{n-k}_M(z) = x^*$.
  The usual explicit formula for the cup-product on singular
  cohomology shows that the cup-product lifts to a cup-product
  on bounded cohomology. 
  Then
  \[ \comp^n_M(y \cup z) = \comp^k_M(y) \cup \comp^{n-k}_M(z) = x \cup x^* \neq 0,\]
  so $\comp^n_M$ maps surjectively onto $H^n(M; \R) \cong \R$. But
  this contradicts the assumption~$\sv{M} = 0$ according to
  Proposition~\ref{prop:duality}.
\end{proof}

The vanishing of the simplicial volume thus implies that not too many
classes can be bounded; dually, the vanishing of the simplicial volume
causes that there are many other classes with vanishing
$\ell^1$-semi-norm.

\begin{cor}
  Let $M$ be an oriented closed connected $n$-manifold satisfying~$\sv
  M = 0$ and let $N_*(M;\R) := \{ \alpha \in H_*(M;\R) \mid
  \|\alpha\|_1 = 0\}$. Then
  \[ \sum_{k \in \N} \dim_\R N_k(M;\R)
     \geq \frac12 \cdot \sum_{k \in \N} \dim_\R H_k(M;\R).
  \] 
\end{cor}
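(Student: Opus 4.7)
The plan is to dualise $N_k(M;\R)$ to cohomology via the evaluation pairing, and then use Proposition~\ref{prop:cup} together with Poincaré duality to bound the sizes of the images of the comparison maps.

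First, since $M$ is a compact manifold, $H_*(M;\R)$ is finite-dimensional. The Duality Principle (Proposition~\ref{prop:duality}) tells us that $\|\alpha\|_1 = 0$ if and only if $\langle \comp^k_M(\varphi),\alpha\rangle = 0$ for every $\varphi\in H^k_b(M;\R)$. Hence $N_k(M;\R)$ is precisely the annihilator of $V_k := \im(\comp^k_M) \subseteq H^k(M;\R)$ with respect to the evaluation pairing $H^k(M;\R)\otimes H_k(M;\R)\to \R$, which is perfect in finite dimensions. Therefore
\[
\dim_\R N_k(M;\R) \;=\; \dim_\R H_k(M;\R) - \dim_\R V_k.
\]

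The key geometric input is Proposition~\ref{prop:cup}: for any $x\in V_k$ and $y\in V_{n-k}$, the cup product $x\cup y\in H^n(M;\R)$ must vanish, because otherwise we would have two bounded classes whose cup product is non-zero, contradicting $\sv M = 0$. Thus $V_k$ and $V_{n-k}$ are orthogonal with respect to the cup product pairing $H^k(M;\R)\otimes H^{n-k}(M;\R)\to H^n(M;\R)\cong \R$. By Poincaré duality this pairing is perfect, which yields
\[
\dim_\R V_k + \dim_\R V_{n-k} \;\leq\; \dim_\R H^k(M;\R) \;=\; \dim_\R H_k(M;\R).
\]

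To finish, I would sum this inequality over $k = 0,\dots,n$. The left-hand side becomes $2\sum_{k} \dim_\R V_k$ by reindexing, while the right-hand side is $\sum_{k}\dim_\R H_k(M;\R)$ (using Poincaré duality and universal coefficients to identify $H^k$ with $H_k$). Dividing by two and inserting the result into the formula $\dim_\R N_k = \dim_\R H_k - \dim_\R V_k$ gives exactly the desired lower bound. The content of the argument lies entirely in the cup-product orthogonality supplied by Proposition~\ref{prop:cup}; there is no real obstacle beyond this, the rest being linear algebra and Poincaré duality.
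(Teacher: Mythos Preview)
Your proof is correct and follows essentially the same approach as the paper: both identify $\dim_\R N_k = \dim_\R H_k - \dim_\R(\im\comp^k_M)$ via the duality principle, and then use Proposition~\ref{prop:cup} together with Poincar\'e duality to bound $\sum_k \dim_\R(\im\comp^k_M)$ by half the total Betti number. Your version simply makes the cup-product orthogonality of $V_k$ and $V_{n-k}$ more explicit before summing.
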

\begin{proof}
  On the one hand, by the duality principle
  (Proposition~\ref{prop:duality}), we have
  \[ \dim_\R H_k(M;\R) - \dim_\R N_k(M;\R) = \dim_\R (\im \comp^k_M)
  \]
  for all~$k \in \N$.  On the other hand, Poincar\'e duality and
  Proposition~\ref{prop:cup} imply that
  \[ \sum_{k\in \N} \dim_\R (\im \comp^k_M) \leq \frac12 \cdot \sum_{k \in \N} \dim_\R H^k(M;\R)
  = \frac12 \cdot\sum_{k\in \N} \dim_\R H_k(M;\R).
  \]
  Combining both estimates gives the claim.
\end{proof}

\subsection{Boundedness of the Euler class}\label{subsec:euler:class}

The Euler characteristic of an oriented closed connected smooth
$n$-manifold~$M$ can be expressed in terms of the Euler class~$e(M)
\in H^n(M;\R)$~\cite{Milnor-Stasheff} via
\[ \chi(M) = \bigl\langle e(M), [M] \bigr\rangle.
\]
The norm of the Euler class has been studied extensively in the
literature, especially, in connection with the existence of
\emph{flat} structures (a detailed account of results in this
direction is given in Frigerio's book~\cite{Frigerio}). The
boundedness of the Euler class is also closely related to
Question~\ref{quest:gromov}.

\begin{quest}\label{quest:euler:bounded}
  Let $M$ be an oriented closed aspherical smooth
  $n$-manifold.  Does the following property hold?
  \begin{equation}
    \text{The Euler class~$e(M) \in H^n(M;\R)$ is bounded.}
    \plabel{Eub}{p:euler:bounded}
  \end{equation}
\end{quest}

\begin{prop}\label{prop:bounded:euler:gromov:quest}
  Let $n \in \N$ and let $M$ be an oriented closed connected smooth
  $n$-manifold.  Then the following are equivalent:
  \begin{enumerate}
  \item The manifold~$M$ satisfies \pref{p:svchi}.
  \item The manifold~$M$ satisfies \pref{p:euler:bounded}.
  \end{enumerate}
\end{prop}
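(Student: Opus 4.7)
My plan is to split the argument into two cases according to whether $\sv{M}$ vanishes, in each case appealing to the duality principle of Proposition~\ref{prop:duality} together with the fact that $H^n(M;\R)\cong \R$ for an oriented closed connected $n$-manifold.

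First, I would dispose of the case $\sv{M}>0$. Here \pref{p:svchi} is vacuously true. On the other hand, Proposition~\ref{prop:duality} asserts that $\sv{M}>0$ is equivalent to surjectivity of the comparison map $\comp^n_M\colon H^n_b(M;\R)\to H^n(M;\R)$. In particular, $e(M)$ lies in the image of $\comp^n_M$, so \pref{p:euler:bounded} also holds.

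Next, I would handle the case $\sv{M}=0$. By Proposition~\ref{prop:duality}, the composition $H^n_b(M;\R)\xrightarrow{\comp^n_M} H^n(M;\R)\xrightarrow{\cap[M]}\R$ has operator norm zero, and hence is identically zero; since the second map is an isomorphism, the image of $\comp^n_M$ must be trivial. Thus a class in $H^n(M;\R)$ is bounded if and only if it vanishes, so \pref{p:euler:bounded} amounts to $e(M)=0$. Using once more the isomorphism $H^n(M;\R)\cong\R$ via $\langle\args,[M]\rangle$, this is equivalent to $\chi(M)=\langle e(M),[M]\rangle=0$, which under the standing hypothesis $\sv{M}=0$ is precisely \pref{p:svchi}.

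I do not expect any substantial obstacle: the argument rests only on the one-dimensionality of $H^n(M;\R)$ together with two applications of the duality principle. The one point requiring a moment of attention is that a subspace of a one-dimensional real vector space is either the whole space or $\{0\}$, so vanishing of the simplicial volume forces the image of $\comp^n_M$ to collapse to $\{0\}$ rather than merely to a proper subspace.
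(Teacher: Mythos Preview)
Your proof is correct and uses essentially the same ingredients as the paper's: the duality principle for the comparison map together with the one-dimensionality of~$H^n(M;\R)$. The only difference is organisational: you split first into the cases~$\sv M>0$ and~$\sv M=0$ and verify the equivalence in each, whereas the paper proves the two implications separately (splitting into the same two cases inside the direction (1)~$\Rightarrow$~(2), and using the norm inequality $|\chi(M)|\leq\|e(M)\|_\infty\cdot\sv M$ for (2)~$\Rightarrow$~(1)); the content is the same.
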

\begin{proof}
  Let $M$ satisfy \pref{p:svchi}. If $\sv{M} =0$, then $\langle e(M),
  [M]\rangle= \chi(M) = 0$. By duality, this implies that~$e(M) = 0$;
  in particular, $e(M)$ is bounded.  On the other hand, if $\sv{M} >
  0$, then the comparison map is surjective
  (Proposition~\ref{prop:duality}), hence $e(M)$ is also bounded. This
  shows that $M$ satisfies \pref{p:euler:bounded}.

  Conversely, suppose that $e(M)$ is bounded. Then
  \[ \bigl| \chi(M) \bigr| = \bigl| \bigl\langle e(M), [M] \bigr\rangle\bigr|
     \leq \bigl\| e(M) \bigr\|_\infty \cdot \sv M.
  \]
  As a consequence, if $\sv M = 0$, then $\chi(M) = 0$; i.e., $M$
  satisfies \pref{p:svchi}.
\end{proof}

\begin{rem}
  Let $(M, \partial M)$ be an oriented compact connected manifold with
  boundary. We may define
  $$e(M, \partial M) \in H^n(M, \partial M; \R)$$
  to be the Poincar\'e dual class to $\chi(M) \in \Z \cong H_0(M;
  \Z)$. We recall that $|\chi(M)| = |\chi(M, \partial M)|$
  (Remark~\ref{rem:eulerrel}). Then \pref{p:svchib} is equivalent to
  \begin{equation}
    e(M, \partial M) \in H^n(M, \partial M;\R) \text{ is bounded.}
    \plabel{Eub, $\partial$}{p:euler:boundedb}
  \end{equation} 
  The proof is the same as for
  Proposition~\ref{prop:bounded:euler:gromov:quest}.
\end{rem}

\begin{rem}
  It is well known that the Euler class of flat vector bundles is
  bounded~\cite[Section~13]{Frigerio}.  This shows that if $\sv{M} =
  0$ and the tangent bundle of~$M$ admits a flat connection, then
  $\chi(M) =
  0$~\cite{ivanov_turaev}\cite{bucher_finiteness}\cite[Theorem~13.11]{Frigerio}.

Moreover, it was conjectured~\cite[Conjecture~13.13]{Frigerio} that the Euler class of 
topologically flat sphere bundles admits a bounded representative. Monod and Nariman~\cite[Theorem~1.8]{monodnariman}
have recently proved that the Euler class of the (discrete) group of orientation-preserving
homeomorphisms of $S^3$ is unbounded.
\end{rem}

\begin{example}
  Assuming that Question~\ref{quest:gromov} has an affirmative answer,
  then Proposition~\ref{prop:bounded:euler:gromov:quest} has
  interesting implications for the existence of tangential maps
  between smooth manifolds. We recall that a map~$f \colon M \to N$
  between closed smooth manifolds is called \emph{tangential} if the
  vector bundles $TM$ and~$f^*TN$ are isomorphic.  As a consequence, a
  tangential map~$f \colon M \to N$ between oriented closed connected
  smooth manifolds preserves the Euler class up to sign.  Assuming
  Question~\ref{quest:gromov}, it follows that there
  cannot exist tangential maps $f \colon M \to N$ if $\chi(M)
  \neq 0$ and $N$ is aspherical with zero simplicial volume. Indeed, assuming 
  that $N$ satisfies \pref{p:svchi}, it follows that $e(N) = 0$ (since $\chi(N) = 0$). 
  Then, given a tangential map~$f \colon M \to N$, the classes $f^*(e(N))$ 
  and $e(M)$ agree up to sign, so $e(M) = 0$ (and therefore also $\chi(M) = 0$). 
\end{example} 

\section{Vanishing of the simplicial volume}\label{sec:van:sv}

In this section, we collect some known results on the simplicial
volume.  We will be mainly interested in describing sufficient
conditions for the vanishing of the simplicial volume. We also compare
those situations with the respective behaviour of the Euler
characteristic.

\subsection{Computations of the simplicial volume}\label{subsec:examples:van:sv}

In general, computing exact values of the simplicial volume is
difficult. For example, the problem of determining whether a given
(triangulated) manifold has vanishing simplicial volume or not is
undecidable~\cite[Chapter~2.6]{weinberger_computersrigidity}.  The two
major sources for (non-)vanishing results are amenability (which leads
to vanishing) and negative curvature (which leads to non-vanishing).

\begin{example}[Non-vanishing]\label{ex:pos:sim:vol}
  The following manifolds have positive simplicial volume:
  \begin{enumerate}
  \item Oriented closed connected hyperbolic manifolds~\cite{thurston,
    vbc};
  \item The compactification of oriented connected complete
    finite-volume hyperbolic manifolds~\cite{vbc, FM};
  \item Oriented closed connected manifolds with negative sectional
    curvature~\cite{inoueyano};
  \item Oriented closed connected locally symmetric spaces of
    non-compact type~\cite{bucherfirst, lafontschmidt}; 
  \item Oriented closed connected manifolds with non-positive
    sectional curvature and sufficiently negative intermediate
    Ricci curvature~\cite{connellwang};
  \item Oriented closed connected manifolds with non-positive
    sectional curvature and strong enough conditions at a single
    point~\cite{connellwang2};
  \item Oriented closed connected rationally essential (e.g., aspherical) 
    manifolds of dimension~$\geq 2$ with non-elementary hyperbolic fundamental group 
    (this follows via the duality principle from work of Mineyev
    on surjectivity of the comparison map~\cite{mineyev});
  \item Oriented closed connected rationally essential manifolds of 
    dimension~$\geq 2$ with non-elementary relatively hyperbolic
    fundamental group~\cite{BHrel};
  \item
    Non-vanishing of simplicial volume is inherited through
    a proportionality principle~\cite{vbc,thurston,loeh_pp}: If $M$
    and $N$ are oriented closed connected Riemannian manifolds with
    isometric universal coverings, then $\sv M > 0$ if and only if
    $\sv N > 0$.
  \end{enumerate}
\end{example}

In this paper, we focus our attention on vanishing results for the
simplicial volume.  The following example contains some known 
vanishing results:

\begin{example}[Vanishing]\label{ex:zero:simplicial:volume}
  The following manifolds have zero simplicial volume:
  \begin{enumerate}
  \item Oriented closed connected $n$-manifolds with amenable
    fundamental group and $n>0$~\cite{vbc} or, more
    generally, with $n$-boundedly acyclic fundamental group~\cite{BAc,
      vbc}; finitely presented non-amenable boundedly acyclic groups
    have been recently constructed~\cite{FLMbdd, monodthompson};
  \item Oriented compact connected $n$-manifolds~$M$ with non-empty
    boundary such that both the fundamental groups of $M$ and
    of~$\partial M$ are amenable~\cite{vbc};
  \item More generally, oriented compact connected $n$-manifolds~$M$
    with non-empty boundary such that $\sv{\partial M} = 0$ and the
    connecting homomorphism $H^{n-1}_b(\partial M) \to H^n_b(M, \partial M)$ is
    surjective. Manifolds satisfying the latter condition can be
    constructed by taking manifolds whose boundary inclusion is
    $\pi_1$-surjective and such that their fundamental group lies
    in~$\lex$~\cite{Bouarich}. Recall that $\lex$ groups are those
    groups~$\Gamma$ such that every epimorphism~$\Lambda
    \twoheadrightarrow \Gamma$ induces an injective map in bounded
    cohomology in every degree.  Examples of $\lex$ groups contain
    free groups, amenable groups~\cite{Bouarich}, boundedly acyclic
    groups and certain extensions of these~\cite[Remark~3.8]{FLMbdd}.
  \item Oriented compact connected manifolds with (possibly empty)
    boundary that admit a self-map~$f$ of degree~$\deg(f) \not\in \{0,
    1, -1\}$~\cite{vbc}; 
  \item Oriented closed connected manifolds that are the boundary of an
    oriented compact connected manifolds with zero simplicial volume
    (Remark~\ref{rem:vanishing:rel:sv:implices:van:boundary});
  \item Oriented closed connected $n$-manifolds that admit a smooth
    non-trivial $S^1$-action~\cite{yano}.  More generally, manifolds
    admitting an $F$-structure also have zero simplicial
    volume~\cite{cheegergromov, paternainpetean};
  \item Oriented closed aspherical manifolds supporting an
    affine structure whose holonomy map is injective and contains a
    pure translation~\cite{BCL};
  \item Oriented closed connected smooth manifolds with zero minimal
    volume~\cite{vbc, BCGminimal} or zero minimal volume
    entropy~\cite[p.~37]{vbc}\cite{balacheffkaram};
  \item Oriented closed connected graph $3$-manifolds~\cite{soma,vbc};
  \item All mapping tori of oriented closed connected
    $3$-manifolds~\cite{bucherneofytidis}; however, the general
    behaviour of simplicial volume of general mapping tori is very
    diverse~\cite{kastenholzreinhold}.
\end{enumerate}
\end{example}

\begin{rem}\label{rem:eulerselfmap}
  In view of Question~\ref{quest:gromov}, it would be interesting to
  understand whether all oriented closed aspherical
  manifolds that admit a self-map~$f$ of degree~$\deg(f) \not\in \{0,
  1, -1\}$ must have zero Euler characteristic. For surfaces this is
  clearly the case (the only candidate being the torus).  More
  generally, this is known to be true whenever the fundamental group
  of the aspherical manifold is Hopfian~\cite{agol_MO}.
  The statement was claimed in full generality by
  Sullivan~\cite[Footnote~23, p.~318]{Sullivan} without a proof. 
\end{rem}

\subsection{Amenable covers: The closed case}\label{subsec:amen:cover:closed}

A useful approach to investigate the vanishing of simplicial volume is
to consider \emph{amenable covers}.  This idea dates back to
Gromov~\cite{vbc} and it was then developed further by many
authors~\cite{Ivanov, GGW, FM:Grom, Frigerio:ell1, LS,
  Ivanov_bac_covers, CLM, bsfibre, lmfibre, georgevanishing}. We use
the terminology of amenable category~\cite{GGW, CLM, lmfibre}:

\begin{defi}[Amenable covers and category]\label{defi:amenable:covers:category}
  \hfil
  \begin{enumerate}
  \item Let $X$ be a topological space and let $U$ be a subset
    of~$X$. We say that $U$ is \emph{amenable} in~$X$ if for every~$x \in
    \, U$ the image
    $$
    \im\bigl(\pi_1(U \hookrightarrow X, x)\bigr) \leq \pi_1(X, x)
    $$
    is amenable. We say that an open cover of~$X$ is \emph{amenable}, if
    it consists of amenable sets.
  \item The \emph{amenable category} of~$X$, denoted by~$\amcat(X)$,
    is the minimal integer~$n$ such that $X$ admits an open amenable
    cover with cardinality~$n$. If no such integer exists, we simply
    set $\amcat(X) = +\infty$.
\end{enumerate}
\end{defi}

The vanishing results for open amenable covers are usually stated in
terms of assumptions on the multiplicity of the cover instead of the
cardinality. These assumptions essentially are the same when working
with paracompact Hausdorff spaces~\cite[Remark~3.13]{CLM}.

The importance of amenable covers in our setting is demonstrated by
the following two results:

\begin{thm}[Gromov's vanishing theorem~{\cite[p.~40]{vbc}}]\label{thm:grom:vanishing:thm}
  Let $M$ be an oriented closed connected $n$-manifold. Then,
  \[ \amcat(M) \leq n \Longrightarrow \sv M = 0.
  \]
\end{thm}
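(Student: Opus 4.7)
The plan is to invoke the duality principle (Proposition~\ref{prop:duality}): since $H^n(M;\R) \cong \R$ via evaluation against~$[M]$, proving $\sv M = 0$ amounts to showing that the comparison map
\[
\comp^n_M \colon H^n_b(M;\R) \longrightarrow H^n(M;\R)
\]
has zero image. Equivalently, I need to verify that $\langle \comp^n_M(\varphi), [M]\rangle = 0$ for every $\varphi \in H^n_b(M;\R)$.

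By hypothesis there exists an open amenable cover $\mathcal{U} = \{U_1,\ldots,U_k\}$ of~$M$ with~$k \leq n$. A subordinate partition of unity yields a canonical map $f \colon M \to |N(\mathcal{U})|$ into the geometric realization of the nerve, which is a simplicial complex with $k$ vertices and hence of dimension at most~$k-1 \leq n-1$. In particular, $H^n(|N(\mathcal{U})|;\R) = 0$. My plan is to argue that the image of~$\comp^n_M$ is contained in $f^*\bigl(H^n(|N(\mathcal{U})|;\R)\bigr)$, which forces it to vanish.

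The crucial input is that each $U_i$ is amenable in~$M$. By the mapping theorem of Gromov and Ivanov, bounded cohomology depends only on the fundamental group up to canonical isomorphism and vanishes in all positive degrees for amenable groups; consequently, the restriction map $H^n_b(M;\R) \to H^n_b(U_i;\R)$ is zero for each~$i$ when $n \geq 1$. Combining this pointwise vanishing with a nerve-type spectral sequence for~$\mathcal{U}$ --- or, more explicitly, with a diffusion procedure on bounded singular cochains built from the partition of unity and from invariant means on the amenable image groups $\im(\pi_1(U_i) \to \pi_1(M))$ --- one produces for every bounded $n$-cocycle on~$M$ a cohomologous bounded cocycle that is the pull-back under~$f$ of a simplicial $n$-cochain on~$|N(\mathcal{U})|$. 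Since the nerve has no $n$-cells, such a pulled-back cocycle vanishes identically.

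The hard part will be the explicit construction of this chain homotopy (equivalently, the degeneration of the appropriate \v{C}ech-type spectral sequence in the relevant range). Verifying that amenability of the members of~$\mathcal{U}$ genuinely trivialises bounded cohomology in degrees at least the cardinality of the cover is the technical core of Gromov's argument and uses his amenable resolution techniques, as later refined by Ivanov and further developed in the works cited in Section~\ref{subsec:amen:cover:closed}. Once this technical step is in hand, it follows that $\comp^n_M(\varphi) = 0$ for every $\varphi \in H^n_b(M;\R)$, and hence Proposition~\ref{prop:duality} yields $\sv M = 0$.
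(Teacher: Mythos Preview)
The paper does not give its own proof of this theorem: it is stated as a known result with a citation to Gromov's original work~\cite[p.~40]{vbc}, and the surrounding text merely records it alongside Sauer's analogous result for the Euler characteristic. So there is no ``paper's proof'' to compare your proposal against.

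That said, your outline follows one of the standard modern strategies (duality principle plus factorisation of bounded cohomology through the nerve of the amenable cover), and you correctly identify the technical heart of the argument---producing, for each bounded cocycle, a cohomologous one pulled back from the nerve---as the step that genuinely requires amenability and the averaging/resolution machinery of Gromov and Ivanov. As written, however, your proposal is a roadmap rather than a proof: the sentence ``one produces \ldots'' is precisely where all the work lies, and you defer it entirely to the literature. If your goal is to supply a self-contained argument, you would still need to carry out that construction (or cite a precise statement, e.g., the vanishing/factorisation theorems in Ivanov's or Frigerio's treatments). If the goal is only to indicate why the theorem is plausible and where to find a proof, then your sketch is adequate and matches how the paper itself handles the result---by citation.
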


A similar result for the Euler characteristic has been proved by
Sauer~\cite{sauerminvol}:

\begin{thm}[Euler characteristic and amenable covers]\label{thm:van:thm:euler:characteristic}
  Let $M$ be an oriented closed aspherical
  $n$-manifold. Then,
  \[ \amcat(M) \leq n \Longrightarrow \chi(M) = 0.
  \]
\end{thm}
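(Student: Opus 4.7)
The plan is to deduce $\chi(M)=0$ by showing that all $L^2$-Betti numbers of $M$ vanish. For any closed smooth $n$-manifold, Atiyah's $L^2$-index theorem yields
\[ \chi(M) = \sum_{k=0}^{n} (-1)^{k}\, \ltb{k}{M}, \]
so it suffices to prove $\ltb{k}{M}=0$ for every $k \in \{0,\dots,n\}$.

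Fix an amenable open cover $\mathcal{U}=\{U_1,\dots,U_n\}$ of $M$. After passing to a refinement subordinate to a suitable partition of unity and to path components, we may assume that $\mathcal{U}$ has multiplicity at most $n$ and that the image in $\pi_1(M)$ of $\pi_1$ of each path component of each $U_i$ is amenable. The partition of unity then yields a classifying map $\varphi \colon M \to N(\mathcal{U})$ to the nerve, a simplicial complex of dimension at most $n-1$. The crucial additional input from asphericity is that $M \simeq B\pi_1(M)$, so that the pieces of $M$ cut out by $\varphi$ are classified (up to homotopy) by amenable subgroups of $\pi_1(M)$.

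The core step is a Mayer--Vietoris (or \v{C}ech-type) spectral sequence associated to $\mathcal{U}$ converging to the $L^2$-cohomology of $\widetilde{M}$. The $E_2$-page vanishes in columns $p>n-1$ for dimensional reasons, and it vanishes in rows $q\geq 1$ because $L^2$-Betti numbers of amenable groups vanish in positive degree (Cheeger--Gromov), combined with asphericity, which converts amenability of local $\pi_1$-images into vanishing of local $L^2$-cohomology. Hence $\ltb{k}{M}=0$ for every $k\geq 1$, while $\ltb{0}{M}=0$ because $\pi_1(M)$ is infinite (as the fundamental group of a closed aspherical manifold of positive dimension). Plugging back into Atiyah's formula gives $\chi(M)=0$.

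The main obstacle is making the spectral sequence argument fully rigorous: specifically, one must set up an $L^2$-\v{C}ech machinery whose $E_2$-page truly computes $L^2$-Betti numbers of the pieces of the cover \emph{interpreted inside $B\pi_1(M)$}, so that the amenability of local $\pi_1$-images actually produces vanishing of local $L^2$-cohomology rather than merely of $L^2$-cohomology of the pieces in isolation. This is the content of Sauer's amenable covering theorem for $L^2$-Betti numbers of closed aspherical manifolds; once that technical input is in hand, the conclusion is immediate from the computation above.
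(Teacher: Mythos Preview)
The paper does not give its own proof of this theorem; it simply attributes the result to Sauer and cites his paper. So the ``paper's proof'' here is really Sauer's argument, and your sketch is indeed aimed at that same route: reduce $\chi(M)=0$ to the vanishing of all $L^2$-Betti numbers via an amenable cover. In that sense you have identified the correct strategy.

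That said, there are two issues with the sketch as written. First, a genuine gap: from ``$E_2^{p,q}=0$ for $p>n-1$'' and ``$E_2^{p,q}=0$ for $q\geq 1$'' you cannot conclude $\ltb{k}{M}=0$ for all $k\geq 1$. The row $q=0$ with $0\leq p\leq n-1$ is untouched by your two vanishing statements, and it need not vanish on the $E_1$- or $E_2$-page: a piece with \emph{finite} (e.g., trivial) $\pi_1$-image has $b_0^{(2)}=1/|\Gamma_I|\neq 0$, so the Cheeger--Gromov input only kills positive degrees. Your argument as stated therefore yields at best $\ltb{n}{M}=0$, which by Poincar\'e duality gives $\ltb{0}{M}=0$, but says nothing about the middle degrees and hence nothing about $\chi(M)$. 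Handling the bottom row (or circumventing it) is precisely where the real work in Sauer's proof lies.

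Second, your closing paragraph is circular. ``Sauer's amenable covering theorem for $L^2$-Betti numbers of closed aspherical manifolds'' is exactly the statement that $\ltb{k}{M}=0$ for all $k$ under the hypothesis $\amcat(M)\leq n$; the present theorem is its immediate corollary via the $L^2$-Euler characteristic formula. Invoking that theorem as a black box to patch the spectral-sequence argument is not a proof but a citation---which is all the paper itself offers. If you want to go beyond that, you need to actually carry out Sauer's analysis of the bottom row (or his alternative homological-algebra argument), not merely name it.
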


\begin{rem}\label{rem:euler:non:asph}
  In Theorem~\ref{thm:van:thm:euler:characteristic}, the asphericity
  assumption is crucial: every even-dimensional sphere provides a
  counterexample in the non-aspherical setting.
\end{rem}

In particular, Theorems~\ref{thm:grom:vanishing:thm}
and~\ref{thm:van:thm:euler:characteristic} show that all oriented
closed aspherical manifolds~$M$ with~$\amcat(M) \leq
\dim(M)$ satisfy \pref{p:svchi}.

\begin{example}
  Certain fibre bundles yield examples of the situation arising in
  Theorems~\ref{thm:grom:vanishing:thm}
  and~\ref{thm:van:thm:euler:characteristic}. Let $N \hookrightarrow M
  \to B$ be a fibre bundle of oriented closed connected manifolds and
  suppose that
  \[ \amcat(N) \leq \frac{\dim(M)}{\dim(B)+1}.
  \]
 Then, we have $\amcat(M) \leq
 \dim(M)$~\cite[Corollary~1.2]{lmfibre} (and $\dim(M) \geq \dim(B) + 1 \geq 1$). Using
 Theorems~\ref{thm:grom:vanishing:thm}
 and~\ref{thm:van:thm:euler:characteristic}, we conclude:
  \begin{itemize}
  \item[(a)] $\sv{M} = 0$ (Theorem~\ref{thm:grom:vanishing:thm}).
  \item[(b)] If $M$ is aspherical, then also~$\chi(M) = 0$
    (Theorem~\ref{thm:van:thm:euler:characteristic}).
  \end{itemize}
  Concerning (b), it should be noted that in this case the asphericity
  of~$N$ is also sufficient in order to conclude that~$\chi(M) =
  0$. Indeed the hypothesis on~$\amcat(N)$ shows that
  \[ \amcat(N) \leq \frac{\dim(M)}{\dim(B) +1}
       = \frac{\dim(B) + \dim(N)}{\dim(B) +1}
       =\frac{\dim(B)}{\dim(B) + 1} + \frac{\dim(N)}{\dim(B) +1}.
  \]
  Since $\amcat(N)$ is an integer, it follows that $\amcat(N) \leq
  \dim(N)$.  Thus, $\chi(N) = 0$
  (Theorem~\ref{thm:van:thm:euler:characteristic}) and so $\chi(M) =
  \chi(N) \cdot \chi(B) = 0$.
\end{example}

Recently, Gromov's vanishing theorem was extended to weaker
situations, such as weakly boundedly acyclic open covers and other
more general homotopy colimit
decompositions~\cite{Ivanov_bac_covers,georgevanishing}. This new
context suggests in particular the following question:

\begin{quest}\label{quest:weakly:boundedly:acyclic:cover:euler}
  Let $M$ be an oriented closed aspherical
  $n$-manifold. Assuming there exists a weakly boundedly acyclic open
  cover of~$M$ in the sense of Ivanov~\cite{Ivanov_bac_covers} with
  cardinality at most~$n$, does it follow that $\chi(M) = 0$\;?
\end{quest}

A negative answer to
Question~\ref{quest:weakly:boundedly:acyclic:cover:euler} would also
produce closed aspherical examples that do not satisfy \pref{p:svchi}.
In fact, no example of an oriented closed aspherical
manifold~$M$ with vanishing simplicial volume and~$\amcat(M) = \dim(M)
+ 1$ seems to be known.

\subsection{Amenable covers: The case with boundary}\label{subsec:amen:cover:relative}

The vanishing theorem can also be extended to the relative setting via
Gromov's vanishing-finiteness
theorem~\cite{vbc,FM:Grom,Ivanov_open_covers}. We quickly recall the
formulation of the vanishing-finiteness theorem and mention two
convenient special cases for compact manifolds with boundary: the
relative vanishing theorem and the case of locally co-amenable
subcomplexes.

For an oriented connected (possibly non-compact) manifold~$M$ without
boundary, the \emph{locally finite simplicial volume} is defined by
\[ \svlf{M} := \inf \bigl\{ |c|_1
\bigm| \text{$c \in C_n^{\lfop}(M;\R)$ is a fundamental cycle of~$M$}\bigr\}, 
\]
where $C_*^{\lfop}(\args;\R)$ denotes the (singular) locally finite
chain complex of~$M$. Because locally finite chains are not
necessarily~$\ell^1$, the locally finite simplicial volume is not
always finite.

\begin{thm}[Vanishing-finiteness theorem~\protect{\cite[p.~58]{vbc}\cite[Section~7.2]{FM:Grom}\cite[Theorem~4.6]{Ivanov_open_covers}}]\label{thm:vanfin}
  Let $M$ be an oriented connected $n$-manifold without boundary and
  let $(U_i)_{i \in \N}$ be an open cover of~$M$ with the following
  properties:
  \begin{itemize}
  \item[(i)] For each~$i \in \N$, the subset~$U_i \subset M$ is
    amenable and relatively compact.
  \item[(ii)] The sequence~$(U_i)_{i \in \N}$ is \emph{amenable at
    infinity}, i.e., there exists an increasing sequence~$(K_i)_{i \in
    \N}$ of compact subsets of~$M$ such that:
    \begin{itemize}
    \item[(a)] The family~$(M \setminus K_i)_{i \in \N}$
    is locally finite;
    
    \item[(b)] For all~$i \in
    \N$, we have $U_i \subset M \setminus K_i$;
    
    \item[(c)] For all sufficiently large~$i \in \N$, the
    set~$U_i$ is amenable in~$M \setminus K_i$.
    \end{itemize}    
  \item[(iii)] 
    The multiplicity of~$(U_i)_{i \in \N}$ is at most~$n$.
  \end{itemize}
  Then $\svlf M = 0$.
\end{thm}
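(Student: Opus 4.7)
The plan is to dualize the problem via bounded cohomology and then kill the fundamental cohomology class by a nerve-type argument adapted to the non-compact setting through the amenability-at-infinity hypothesis.

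First, I would establish a locally finite analogue of the duality principle (Proposition~\ref{prop:duality}): $\svlf{M}$ equals the operator norm of the comparison map from a suitable compactly supported bounded cohomology $H^n_{b,c}(M;\R)$ into compactly supported singular cohomology $H^n_c(M;\R) \cong \R$, evaluated on the locally finite fundamental class~$[M]^{\lfop}$. Consequently, it suffices to show that for every $\varepsilon > 0$ there is a compactly supported bounded cocycle representing a generator of $H^n_c(M;\R)$ whose $\ell^\infty$-norm is at most~$\varepsilon$.

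Second, I would exploit conditions (i) and (iii) via a nerve/\v{C}ech argument. The multiplicity bound~$n$ forces the nerve of~$\mathcal U$ to have dimension at most~$n-1$, and the acyclicity of bounded cohomology on amenable subsets (Gromov--Ivanov) then implies that a bounded cohomology class in degree~$n$ cannot be detected through the cover: any such class lifts, up to bounded coboundary, to data on the nerve, and there is nothing in degree~$n$ to accommodate it. In the closed case this recovers Gromov's vanishing theorem (Theorem~\ref{thm:grom:vanishing:thm}). In our open setting, the same scheme applies to each tail~$M\setminus K_i$, where condition~(ii)(c) supplies the required amenability of the cover elements in their ambient space.

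Third, I would globalize the local vanishings using the full force of condition~(ii). Starting from an arbitrary compactly supported bounded cocycle representing the generator of~$H^n_c(M;\R)$, I would diffuse its mass along the amenable cover of some tail~$M\setminus K_i$: condition (ii)(b) keeps the averaging inside that tail, condition (ii)(c) ensures that the averaging kills the class in degree~$n$ modulo a bounded coboundary of arbitrarily small $\ell^\infty$-norm, and condition (ii)(a) guarantees that only finitely many cover elements participate near each point, so that the resulting cocycle remains compactly supported up to a coboundary. Passing to cohomology and invoking the duality established in the first step yields $\svlf{M} = 0$.

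The main obstacle is the simultaneous control of norm and support in the third step. The natural spreading/diffusion operation that produces the small norm a priori destroys the compactness of the support; it is precisely condition~(ii)(a), the local finiteness of the sequence $(M\setminus K_i)_{i\in\N}$, that prevents the spread cocycle from accumulating infinitely many contributions near a fixed point and allows one to confine the small-norm representative to a compact set modulo coboundary. Making this interplay quantitative, while simultaneously respecting the multiplicity bound~(iii) and the amenability of each $U_i$ in~$M\setminus K_i$, is the technical heart of the argument.
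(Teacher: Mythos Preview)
The paper does not give its own proof of Theorem~\ref{thm:vanfin}; the result is quoted from Gromov, Frigerio--Moraschini, and Ivanov, and then applied as a black box in Theorem~\ref{thm:relvan} and Proposition~\ref{prop:loc:coam:relative}. So there is no in-paper argument to compare your plan against.

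Independently of that, your first step contains a genuine error of direction. You claim that $\svlf M = 0$ would follow once, for every~$\varepsilon>0$, you exhibit a compactly supported bounded cocycle representing a generator of~$H^n_c(M;\R)$ with $\ell^\infty$-norm at most~$\varepsilon$. But this yields the opposite conclusion: if such~$\varphi$ exists and $c$ is any locally finite fundamental cycle with~$|c|_1 < \infty$, then $1 = |\langle \varphi, c\rangle| \leq \varepsilon \cdot |c|_1$ (the pairing is a finite sum since $\varphi$ has compact support), hence $|c|_1 \geq 1/\varepsilon$ for all~$\varepsilon$, forcing $\svlf M = +\infty$. Recall the closed case for orientation: $\sv M > 0$ precisely when the dual fundamental class \emph{is} bounded, and then $\sv M$ equals the reciprocal of its $\ell^\infty$-seminorm (Proposition~\ref{prop:duality}). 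The cohomological translation of~$\svlf M = 0$ you actually need is that the relevant comparison map vanishes in degree~$n$, i.e., that \emph{no} bounded (compactly supported) cocycle represents a non-zero class in~$H^n_c(M;\R)$. Your steps~2 and~3, as written, aim at producing small-norm bounded representatives of the generator, so they inherit the same reversal; diffusing a representative of the generator to something of small $\ell^\infty$-norm that remains cohomologous to it does not help.

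For context, the cited proofs proceed differently: Gromov's original argument and the treatment in Frigerio--Moraschini work on the chain side, diffusing locally finite fundamental cycles along the amenable cover to produce cycles of small $\ell^1$-norm directly, while Ivanov's approach goes through a sheaf/resolution framework for bounded cohomology. If you wish to keep a cohomological route, the correct target is to show that every compactly supported bounded $n$-cocycle is an ordinary (compactly supported) coboundary, not that the generator admits small bounded representatives.
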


\begin{rem}
  Properties (a) and (b) in (ii) show that every sequence $(U_i)_{i \in \N}$
  which is amenable at infinity is necessarily locally finite.
\end{rem}

Vanishing of the locally finite simplicial volume leads to vanishing
results for the relative simplicial volume:

\begin{rem}\label{rem:lfrel}
  If $(M,\partial M)$ is an oriented compact connected manifold, then
  $\sv{M,\partial M} \leq
  \svlf{\interior(M)}$~\cite{vbc}\cite[Proposition~5.12]{Loehthesis}. In
  general, this inequality is
  strict~\cite[p.~10]{vbc}\cite[Example~6.17]{Loehthesis}:
  E.g.,~$\sv{[0,1], \{0,1\}} < \infty = \svlf{(0,1)}$.
  
  On the other hand,  there is no known example for which $\svlf{\interior(M)}$ is finite and distinct from $\|M,\partial M\|$.
\end{rem}

\begin{thm}[Relative vanishing theorem]\label{thm:relvan}
  Let $(M, \partial M)$ be an oriented compact connected
  $n$-manifold that admits an amenable open cover~$(U_i)_{i \in I}$
  with the following properties:
  \begin{itemize}
  \item[(i)] The multiplicity of~$(U_i)_{i \in I}$ is at most~$n$.
  \item[(ii)] The multiplicity of~$(U_i \cap \partial M)_{i \in I}$ is at
    most~$n-1$.
  \item[(iii)] For each~$i \in I$, the set~$U_i \cap \partial M$ is
    amenable in~$\partial M$.
  \end{itemize}
  Then $\sv{M,\partial M} = 0$.
\end{thm}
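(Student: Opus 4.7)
The plan is to deduce $\sv{M,\partial M}=0$ from the inequality $\sv{M,\partial M}\le\svlf{\interior(M)}$ of Remark~\ref{rem:lfrel} and then to obtain the latter from the vanishing-finiteness theorem (Theorem~\ref{thm:vanfin}). So the task reduces to constructing an open cover of $\interior(M)$ by amenable, relatively compact sets, of multiplicity at most $n$, which is amenable at infinity in the sense of Theorem~\ref{thm:vanfin}.

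Fix an open collar $c\colon\partial M\times[0,1)\hookrightarrow M$ with $c(x,0)=x$ and set $V_i:=U_i\cap\partial M$; by hypotheses (ii) and (iii), $(V_i)_{i\in I}$ is an amenable open cover of $\partial M$ of multiplicity at most $n-1$. Pick a sequence $t_k\searrow 0$ in $(0,1)$ and set $K_k:=M\setminus c(\partial M\times[0,t_k))$, a compact exhaustion of $\interior(M)$. I would assemble the cover in two pieces: outside the collar, on $K_1\cap\interior(M)$, use suitably shrunken restrictions of the original sets $U_i$, whose amenability in $\interior(M)$ and multiplicity bound $\le n$ are inherited from those of $(U_i)$ in $M$; inside the collar, telescope the boundary cover along the shells $c(\partial M\times(t_{k+1},t_{k-1}))$.

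The technical heart is the multiplicity bookkeeping inside the collar. The na\"ive product cover $\{V_i\times(t_{k+1},t_k)\}_{i,k}$ would give multiplicity up to $2(n-1)$ at the interfaces $\partial M\times\{t_k\}$, exceeding $n$ for $n\ge 3$. To sharpen the bound, I would use a nerve-theoretic refinement: choose a partition of unity subordinate to $(V_i)_{i\in I}$ and form the associated map $\varphi\colon\partial M\to|N|$ into the geometric realization of the nerve, a simplicial complex of dimension $\le n-2$; take the product with a simplicial subdivision of $(0,1)$ of dimension one, triangulate the resulting Cartesian product into a simplicial complex of dimension $\le n-1$, and pull back its open star cover via the product map $\partial M\times(0,1)\to|N|\times(0,1)$. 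The resulting open cover of the collar has multiplicity $\le n$; each of its elements lies inside some $V_i\times(t_{k+1},t_{k-1})$ and therefore deformation retracts to a subset of $V_i$, so it is amenable in $\interior(M)\setminus K_{k-1}$ because that complement retracts onto $\partial M$ and the inclusion of $V_i$ into $\partial M$ is amenable by hypothesis (iii).

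With the cover in hand, the hypotheses of Theorem~\ref{thm:vanfin} are verified directly: relative compactness is immediate from the shell structure, amenability of each element in the appropriate $\interior(M)\setminus K_k$ is the observation above, local finiteness follows from $t_k\to 0$, and the global multiplicity bound holds by construction after gluing the two pieces on a thin annular transition region (where one arranges that only the star refinement is active). The main obstacle is precisely this multiplicity-reduction step in the collar, that is, circumventing the factor-of-two blow-up inherent in naive product covers; once past it, Theorem~\ref{thm:vanfin} yields $\svlf{\interior(M)}=0$, and Remark~\ref{rem:lfrel} gives $\sv{M,\partial M}=0$.
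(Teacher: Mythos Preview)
Your proposal is correct and follows the same strategy as the paper: reduce via Remark~\ref{rem:lfrel} to $\svlf{\interior(M)}=0$ and then construct a cover of $\interior(M)$ satisfying the hypotheses of Theorem~\ref{thm:vanfin}. The paper's argument differs only cosmetically, modelling $\interior(M)$ as $M\cup_{\partial M}(\partial M\times[0,\infty))$, extending each $U_i$ by the half-cylinder over $U_i\cap\partial M$, and then deferring the multiplicity-preserving refinement into relatively compact pieces (your nerve/star-cover step) to a cited ``standard procedure''; hypothesis~(ii) enters at exactly the place you identify.
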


\begin{proof}
  In view of Remark~\ref{rem:lfrel}, it suffices to show that the
  vanishing-finiteness theorem (Theorem~\ref{thm:vanfin}) can be
  applied to~$\interior(M)$.  To this end, we modify the given open
  cover~$(U_i)_{i \in I}$ as follows. Up to homeomorphism, we can
  write
  \[
  \interior (M) \cong M \cup_{\partial M} \bigl( \partial M \times [0, +\infty) \bigr).
  \]
  We extend~$(U_i)_{i \in I}$ to the right-hand side by replacing the
  sets~$V$ of~$(U_i)_{i \in I}$ intersecting~$\partial M$ with~$V \cup
  ((V \cap \partial M) \times [0,\infty))$. Let $(V_i)_{i \in I}$
  be the resulting open cover of~$\interior(M)$. We now upgrade
  this cover~$(V_i)_{i \in I}$ to a locally finite cover made of
  relatively compact sets without increasing the multiplicity;
  because the intersection of~$(U_i)_{i \in I}$ with~$\partial M$
  has multiplicity at most~$n-1$, this is indeed possible by
  a standard procedure~\cite[Proof of Theorem~11.2.3, p.~144]{FM:Grom}.
  The resulting open cover satisfies all the conditions required
  by the vanishing-finiteness theorem (Theorem~\ref{thm:vanfin}).
\end{proof}

Based on Theorem~\ref{thm:relvan}, the following question is a special
case of Question~\ref{quest:gromov:relative}:

\begin{quest}
  Let $(M, \partial M)$ be an oriented compact aspherical
  $n$-manifold with non-empty $\pi_1$-injective aspherical boundary
  that admits an open cover as in Theorem~\ref{thm:relvan}. Then, do
  we have $\chi(M, \partial M) = 0$\;?
\end{quest}

Amenable covers as in the vanishing-finiteness theorem
(Theorem~\ref{thm:vanfin}) appear naturally in the presence of locally
co-amenable subcomplexes~\cite{vbc, FM:Grom}:

\begin{defi}[Locally co-amenable subcomplex~{\cite[p.~59]{vbc}\cite[Definition~11.2.1]{FM:Grom}}]
  Let $M$ be an oriented compact connected PL-manifold with non-empty
  boundary and let $P$ be a simplicial complex such that $M \cong
  |P|$.  Assume that there exists a simplicial complex $K \subset P$
  such that $|K| \subset \interior(M)$ and $M$ is homeomorphic to a
  closed regular neighbourhood of $K$ inside
  $P$~\cite[Definition~11.1.4]{FM:Grom}.  Suppose also that $K$ has
  codimension at least~$2$ in~$M$. Then, $K$ is called \emph{locally
    co-amenable in~$P$ (or in $M$)} if for each vertex $v \in \, (K'')^0$ of the
  second barycentric subdivision~$K''$ of~$K$ we have that
  $$
  \pi_1(|S_v \setminus (S_v \cap K)|)
  $$
  is amenable. Here, $S_v$ denotes the simplicial sphere in~$P''$
  centered at~$v$.
\end{defi}

\begin{rem}\label{rem:loccoam:homotopy:equiv}
  If $K$ is locally co-amenable in~$M$, then $M$ is homotopy
  equivalent to $K$.
\end{rem}

\begin{rem}
  If both $M$ and~$\partial M$ are aspherical and $M$ admits a locally
  co-amenable subcomplex~$K$, then the boundary inclusion is
  \emph{not} $\pi_1$-injective: Indeed, if $\partial M \hookrightarrow
  M$ were $\pi_1$-injective, then $\pi_1(\partial M)$ would be
  isomorphic to a subgroup of~$\pi_1(M)$. So, asphericity and the
  Shapiro lemma show that
  \[ \cd \pi_1(M) \geq \cd \pi_1(\partial M) = \dim(M) - 1.
  \]
  However, as $M$ is homotopy equivalent to~$K$
  (Remark~\ref{rem:loccoam:homotopy:equiv}), also $K$ is aspherical
  and thus
  \[ \cd \pi_1(M) \leq \dim(K) \leq \dim(M) - 2,
  \]
  which is a contradiction.
\end{rem}

\begin{prop}\label{prop:loc:coam:relative}
  If $M$ is an oriented compact connected PL-manifold with non-empty
  boundary that admits a locally co-amenable subcomplex, then $\sv{M,
    \partial M} = 0$.
\end{prop}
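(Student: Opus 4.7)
The plan is to apply the vanishing-finiteness theorem (Theorem~\ref{thm:vanfin}) to the open manifold~$\interior(M)$ in order to conclude that $\svlf{\interior(M)} = 0$, and then deduce $\sv{M,\partial M} = 0$ from the inequality $\sv{M,\partial M} \leq \svlf{\interior(M)}$ recalled in Remark~\ref{rem:lfrel}.

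The required open cover of~$\interior(M)$ will be extracted from the second barycentric subdivision~$P''$ refining~$K''$. Since $K$ has codimension at least~$2$ in~$M$, a closed regular neighbourhood of~$K$ in~$P$ is homeomorphic to~$M$ and an open one to~$\interior(M)$. For each vertex~$v \in (K'')^0$, I would build an open, relatively compact subset~$V_v \subset \interior(M)$ that deformation retracts onto $|S_v \setminus (S_v \cap K)|$; by the local co-amenability hypothesis, $\pi_1(V_v)$ is then amenable, so each $V_v$ is amenable in~$\interior(M)$. A standard combinatorial argument, crucially exploiting that $\dim K \leq n - 2$, allows the collection $\{V_v\}_{v \in (K'')^0}$ to be arranged into an open cover of~$\interior(M)$ whose multiplicity drops from the generic value~$n+1$ on a triangulated $n$-manifold to at most~$n$.

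For the amenability-at-infinity condition required by Theorem~\ref{thm:vanfin}, I would exhaust~$\interior(M)$ by a nested sequence of compact sets $(K_j)_{j \in \N}$ obtained from collar coordinates near~$\partial M$, and re-index the $V_v$ by~$\N$, possibly with repetitions, so that for each sufficiently large~$i$ one has $U_i \subset \interior(M) \setminus K_i$ and $U_i$ is amenable in $\interior(M) \setminus K_i$. Because each $V_v$ deformation retracts onto a subset of~$K$, which lies away from~$\partial M$, translating its copies along the collar direction preserves their homotopy type and hence their amenability inside the successively smaller ambient spaces.

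The main obstacle is the simultaneous combinatorial-geometric bookkeeping: one must realise all four conditions, that is multiplicity at most~$n$, relative compactness, local finiteness, and amenability inside each complement $\interior(M) \setminus K_j$, within a single cover. This is the technical heart of Gromov's vanishing-finiteness machinery, carried out in detail in~\cite{vbc} and in~\cite{FM:Grom}; the codimension condition on~$K$ is what makes the multiplicity reduction possible and is therefore indispensable.
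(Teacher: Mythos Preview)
Your approach is essentially the same as the paper's: apply the vanishing-finiteness theorem (Theorem~\ref{thm:vanfin}) to~$\interior(M)$ and then invoke Remark~\ref{rem:lfrel}. The paper simply cites \cite[Theorem~11.2.3]{FM:Grom} for the construction of the required cover rather than sketching it; your sketch has a few imprecisions (for instance, the~$V_v$ cannot simultaneously retract onto~$|S_v \setminus (S_v \cap K)|$ and onto a subset of~$K$), but since you ultimately defer to the same reference for the technical details, this does not affect the argument.
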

\begin{proof}
  Under the given assumptions, the vanishing-finiteness theorem
  (Theorem~\ref{thm:vanfin}) applies
  to~$\interior(M)$~\cite[Theorem~11.2.3]{FM:Grom}.  Therefore,
  $\sv{M,\partial M} = 0$ (Remark~\ref{rem:lfrel}).
\end{proof}

\subsection{Products of manifolds}\label{subsec:products:sv}

While the Euler characteristic is multiplicative with respect
to products, the product behaviour for the simplicial volume
is more delicate. If one of the factors is closed, the vanishing
behaviour of simplicial volume is controlled by the factors:

\begin{prop}[Simplicial volume and products~{\cite{vbc, Bucher:product}\cite[Proposition~C.7]{Loehthesis}}]\label{prop:sv:products}
  Let $M$ be an oriented closed connected $m$-manifold and let $N$ be
  an oriented compact connected $n$-manifold with (possibly empty)
  boundary. Then, we have
  $$
  \sv{M} \cdot \sv{N, \partial N} \leq \sv{M \times N, \partial (M \times N)} \leq {{n+m}\choose{m}} \cdot \sv{M} \cdot \sv{N, \partial N}.
  $$
\end{prop}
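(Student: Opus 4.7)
The plan is to establish the two inequalities independently, using the Eilenberg--Zilber machinery on chains for the upper bound and the dual version (Alexander--Whitney) for the lower bound, invoking the duality principle (Proposition~\ref{prop:duality}) throughout. Note first that since $M$ is closed we have $\partial(M\times N)=M\times\partial N$, and the fundamental class of the product is the homological cross product $[M]\times[N,\partial N]\in H_{m+n}(M\times N,M\times\partial N;\R)$.

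For the upper bound, I would start with arbitrary relative fundamental cycles $c_M\in C_m(M;\R)$ of $M$ and $c_N\in C_n(N;\R)$ of $(N,\partial N)$. The Eilenberg--Zilber shuffle map
\[
EZ\colon C_\ast(M;\R)\otimes C_\ast(N;\R)\longrightarrow C_\ast(M\times N;\R)
\]
sends a basis element $\sigma\otimes\tau$ (with $\sigma$ an $m$-simplex and $\tau$ an $n$-simplex) to a signed sum of exactly $\binom{m+n}{m}$ singular $(m+n)$-simplices, one for each $(m,n)$-shuffle of $\Delta^{m+n}$. Since $\partial c_N$ has support in $\partial N$, the cycle $EZ(c_M\otimes c_N)$ represents $[M]\times[N,\partial N]$ modulo $C_\ast(M\times\partial N;\R)$ and is therefore a relative fundamental cycle of $(M\times N,\partial(M\times N))$. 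Counting shuffles yields
\[
\bigl|EZ(c_M\otimes c_N)\bigr|_1\leq \binom{m+n}{m}\cdot |c_M|_1\cdot |c_N|_1,
\]
and taking the infimum over $c_M$ and $c_N$ gives the upper bound.

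For the lower bound, I would use duality. Given $\varepsilon>0$, by Proposition~\ref{prop:duality} there exist bounded classes $\alpha\in H^m_b(M;\R)$ and $\beta\in H^n_b(N,\partial N;\R)$ with $\|\alpha\|_\infty\leq 1$, $\|\beta\|_\infty\leq 1$, and
\[
\bigl|\langle\comp^m_M(\alpha),[M]\rangle\bigr|\geq \sv{M}-\varepsilon,\qquad
\bigl|\langle\comp^n_{N,\partial N}(\beta),[N,\partial N]\rangle\bigr|\geq \sv{N,\partial N}-\varepsilon.
\]
The cohomological cross product, implemented on cochains via the Alexander--Whitney formula $(\varphi\times\psi)(\sigma)=\varphi(\sigma|_{[0,\dots,m]})\cdot\psi(\sigma|_{[m,\dots,m+n]})$, descends to bounded cohomology and satisfies $\|\alpha\times\beta\|_\infty\leq\|\alpha\|_\infty\cdot\|\beta\|_\infty\leq 1$, and it lands in $H^{m+n}_b(M\times N,M\times\partial N;\R)$ because $\beta$ is relative to $\partial N$. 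Naturality of the cross product with respect to the comparison map together with $[M\times N,\partial(M\times N)]=[M]\times[N,\partial N]$ yields
\[
\bigl\langle\comp^{m+n}(\alpha\times\beta),[M\times N,\partial(M\times N)]\bigr\rangle
=\bigl\langle\comp^m_M(\alpha),[M]\bigr\rangle\cdot\bigl\langle\comp^n_{N,\partial N}(\beta),[N,\partial N]\bigr\rangle.
\]
Applying the duality principle once more to the class $\alpha\times\beta$ bounds $\sv{M\times N,\partial(M\times N)}$ from below by $(\sv{M}-\varepsilon)(\sv{N,\partial N}-\varepsilon)$, and letting $\varepsilon\to 0$ gives the lower bound.

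The main obstacle is bookkeeping rather than conceptual: one must verify carefully that the shuffle and Alexander--Whitney constructions respect the relative structure (i.e.\ that boundaries go into $M\times\partial N$) and that both constructions are compatible with the comparison map. These are standard facts, but writing down the identification $\partial(M\times N)=M\times\partial N$ and the relative cross product on (bounded) cohomology is the step where one has to be precise.
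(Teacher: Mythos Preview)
The paper does not actually prove this proposition; it is stated with references to Gromov, Bucher, and L\"oh's thesis and then used as a black box. Your argument is correct and is precisely the standard proof found in those references: the upper bound via the Eilenberg--Zilber shuffle map (with the shuffle count giving the binomial coefficient) and the lower bound via the submultiplicativity of the $\ell^\infty$-seminorm under the cohomological cross product combined with the duality principle. Your remarks about the relative bookkeeping (that the shuffle product of $c_M\otimes c_N$ lands in the correct relative chain group and that the Alexander--Whitney cross product of a relative bounded cocycle is again relative) are exactly the points one needs to check, and they go through as you indicate.
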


The exact values in general are unknown; the only known non-zero
computation is the product of two closed surfaces~\cite{bucherprod}.

On the other hand, the product of at least three compact manifolds
with non-empty boundary always has vanishing simplicial volume:

\begin{prop}\label{prop:vanishing:triple:product}
  Let $M_1, M_2, M_3$ be oriented compact connected PL-manifolds with
  non-empty boundary. Then, we have
  $$
  \sv{M_1 \times M_2 \times M_3, \partial (M_1 \times M_2 \times M_3)} = 0 \ .
  $$
\end{prop}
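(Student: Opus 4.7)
The plan is to apply Proposition~\ref{prop:loc:coam:relative} to $M := M_1 \times M_2 \times M_3$ by exhibiting a locally co-amenable subcomplex built as the product of spines of the factors. Since each $M_i$ is a compact PL-manifold with non-empty boundary, it collapses onto a spine $K_i$, that is, a subpolyhedron of dimension at most $n_i - 1$, where $n_i := \dim M_i$; by choosing suitable triangulations we may assume that $K_i$ is a subcomplex of a triangulation $P_i$ of $M_i$ with $M_i$ a closed regular neighbourhood of $K_i$ in $P_i$.

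Set $n := \dim M$ and let $K := K_1 \times K_2 \times K_3$ viewed as a subpolyhedron of $M$ (after choosing a simplicial structure on the product that restricts appropriately on each factor). Then
\[ \dim K \leq (n_1 - 1) + (n_2 - 1) + (n_3 - 1) = n - 3, \]
so $K$ has codimension at least $3 \geq 2$ in $M$. Moreover, taking the product of the three collapses $M_i \searrow K_i$ exhibits $M$ as a closed regular neighbourhood of $K$.

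The heart of the argument is to verify the local co-amenability of $K$ in $M$ at each vertex $v = (v_1, v_2, v_3)$ of the second barycentric subdivision $K''$. In the product triangulation, the simplicial link $S_v$ of $v$ in $M$ is PL-homeomorphic to the join of the simplicial links $L_i$ of $v_i$ in $M_i$, and is therefore a PL-sphere of dimension $n - 1$; similarly the intersection $S_v \cap K$ is PL-homeomorphic to the join of the links of $v_i$ in $K_i$ (with the convention that the link is empty when $\dim K_i = 0$). Since each such link of $v_i$ in $K_i$ has dimension at most $n_i - 2$, the join dimension formula gives
\[ \dim (S_v \cap K) \leq (n_1 - 2) + (n_2 - 2) + (n_3 - 2) + 2 = n - 4. \]
Hence $|S_v \setminus (S_v \cap K)|$ is the complement, in the PL-sphere $|S_v| \cong S^{n-1}$, of a subcomplex of codimension at least $3$. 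Since $n \geq 3$, a general position argument shows that this complement is simply connected; in particular, its fundamental group is trivial, and thus amenable.

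Consequently $K$ is a locally co-amenable subcomplex of $M$, and Proposition~\ref{prop:loc:coam:relative} yields $\sv{M_1 \times M_2 \times M_3, \partial (M_1 \times M_2 \times M_3)} = 0$. The main delicate point is making compatible PL-triangulation choices so that the product structure behaves correctly with respect to regular neighbourhoods and simplicial links; this is standard in PL-topology but demands some care.
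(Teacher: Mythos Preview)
Your approach matches the paper's primary proof: exhibit a locally co-amenable subcomplex of the triple product and invoke Proposition~\ref{prop:loc:coam:relative}. The paper simply cites Gromov and Frigerio--Moraschini for the existence of such a subcomplex, whereas you spell out the construction via the product of spines; the core of your argument (codimension~$\geq 3$ spine, hence simply connected link complements) is exactly the content of those references, and is correct.

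One minor inaccuracy: after passing to the second barycentric subdivision, not every vertex of~$K''$ is of the form~$(v_1,v_2,v_3)$ with~$v_i$ a vertex of~$K_i''$; the new vertices are barycenters of higher-dimensional simplices, so your join description of~$S_v$ and~$S_v\cap K$ does not literally apply at all vertices. This does not matter, however, because all you actually need is the dimension bound: for \emph{any} vertex~$v$ of~$K''$, the set~$S_v\cap K''$ is the link of~$v$ in~$K''$, hence has dimension at most~$\dim K - 1 \leq n-4$, and the general position argument goes through unchanged. You might rephrase the paragraph to rely only on this dimension count rather than on the join structure.

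It is also worth noting that the paper offers a second, quite different, proof: the homotopy fibre of the boundary inclusion~$\partial M \hookrightarrow M$ (with~$M = M_1\times M_2\times M_3$) is simply connected, so the induced map~$H^k_b(M) \to H^k_b(\partial M)$ is an isomorphism in all degrees; the long exact sequence of the pair then gives~$H^n_b(M,\partial M)=0$, and the vanishing of the simplicial volume follows from the duality principle. This avoids all PL bookkeeping at the cost of invoking the mapping theorem for bounded cohomology.
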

\begin{proof}
  In this situation, $M_1 \times M_2 \times M_3$ admits a locally
  co-amenable subcomplex~\cite[Example~(a),
    p.~59]{vbc}\cite[Theorem~14]{FM:Grom}. Therefore, we can apply
  Proposition~\ref{prop:loc:coam:relative}.

An alternative proof is as follows: Let $n = \dim(M_1 \times M_2 \times M_3)$.
The homotopy fibre of the boundary inclusion
in this case has trivial fundamental group and thus has trivial bounded cohomology.
Hence the induced map in bounded cohomology 
$H_b^k(M_1 \times M_2 \times M_3) \to H_b^k(\partial (M_1 \times M_2 \times M_3))$
is an isomorphism in every degree~\cite{vbc, BAc}.
The long exact sequence of the pair then implies that 
$H_b^n(M_1 \times M_2 \times M_3, \partial (M_1 \times M_2 \times M_3))$
is trivial, whence $\sv{M_1 \times M_2 \times M_3, \partial (M_1 \times M_2 \times M_3)} = 0$
(Proposition~\ref{prop:duality}).
\end{proof}

A nice application of the previous result is the following:

\begin{example}\label{example:triple:product:tori}
  Let $(T^2)^{\times}$ denote a $2$-dimensional torus with an open
  disk removed.  Then the simplicial volume of the product of (at
  least) three copies of $(T^2)^{\times}$ vanishes. On the other
  hand, $\chi((T^2)^{\times} \times (T^2)^{\times} \times
  (T^2)^{\times}) = -1$.
\end{example}

While triple products have zero relative simplicial volume, the
situation remains undecided in the following case of products of two
compact manifolds~\cite[Question~8]{FM:Grom}:

\begin{quest}\label{quest:vanishing:product:2:simvol}
  Let $M$ and $N$ be oriented compact connected manifolds with
  non-empty connected boundaries. Does it follow that
  \begin{equation}
    \sv{M \times N, \partial (M \times N)} = 0 \;\text{?}
    \plabel{SV$\times$}{p:svprod}
  \end{equation}
\end{quest}

\begin{rem}
  Without the connectedness assumption on the boundary, there are
  products that do not satisfy \pref{p:svprod}.  For example, it is
  known that the product of a compact hyperbolic surface with boundary
  and a closed interval has non-zero relative simplicial
  volume~\cite[p. 17]{vbc} \cite[Corollary~6.2]{Loehthesis}.
\end{rem}

The following proposition shows an interesting connection between
\pref{p:svprod} and Question~\ref{quest:gromov}.

\begin{prop}\label{prop:gromov:quest:products}
  Let $M$ and $N$ be oriented compact aspherical manifolds
  with non-empty $\pi_1$-injective aspherical boundary that satisfy
  $\chi(M) \cdot \chi(N) \neq 0$. Furthermore, suppose that $M$ and
  $N$ have dimensions of different parity and satisfy
  \pref{p:svprod}. Then $\partial(M \times N)$ does not satisfy
  \pref{p:svchi}.
\end{prop}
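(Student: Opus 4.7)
The plan is to exhibit $\partial(M \times N)$ as an oriented closed aspherical manifold with vanishing simplicial volume but non-zero Euler characteristic, thereby giving a counterexample to \pref{p:svchi}. There are three ingredients to verify: asphericity of $\partial(M \times N)$, vanishing of $\sv{\partial(M \times N)}$, and non-vanishing of $\chi(\partial(M \times N))$.

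First, I would express the boundary as the pushout
\[
  \partial(M \times N) \;\cong\; (\partial M \times N) \cup_{\partial M \times \partial N} (M \times \partial N).
\]
All four spaces appearing here are aspherical (asphericity is preserved by products), and the $\pi_1$-injectivity of $\partial M \hookrightarrow M$ and $\partial N \hookrightarrow N$, combined with the compatibility of $\pi_1$ with products, ensures that $\partial M \times \partial N \hookrightarrow \partial M \times N$ and $\partial M \times \partial N \hookrightarrow M \times \partial N$ are $\pi_1$-injective cofibrations. The classical asphericity criterion for pushouts along $\pi_1$-injective inclusions of aspherical spaces (equivalently, a Bass–Serre tree argument: the universal cover of the pushout is assembled from universal covers of the three pieces indexed by the Bass–Serre tree of the associated amalgamated product, and is contractible) then implies that $\partial(M \times N)$ is aspherical.

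Second, the vanishing of the simplicial volume is immediate from the hypotheses: \pref{p:svprod} gives $\sv{M \times N, \partial(M \times N)} = 0$, and Remark~\ref{rem:vanishing:rel:sv:implices:van:boundary} forces $\sv{\partial(M \times N)} = 0$. Third, since $\dim M$ and $\dim N$ have different parity, the dimension of $M \times N$ is odd, so Remark~\ref{rem:eulerrel} (via Poincaré–Lefschetz duality) yields
\[
  \chi\bigl(\partial(M \times N)\bigr) \;=\; 2\,\chi(M \times N) \;=\; 2\,\chi(M)\,\chi(N) \;\neq\; 0.
\]
Combining the three points shows that $\partial(M \times N)$ is an oriented closed aspherical manifold violating \pref{p:svchi}.

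The main step to worry about is the asphericity of the pushout: one must justify that the natural decomposition of the boundary behaves as a homotopy pushout (clear because the inclusions are cofibrations) and that the $\pi_1$-injectivity and asphericity hypotheses propagate to the three pieces $\partial M \times N$, $M \times \partial N$, and $\partial M \times \partial N$ and to their respective inclusion maps. Once these are checked, the Bass–Serre / universal cover argument closes the asphericity question, and the remaining two steps are bookkeeping from results already recorded in the excerpt.
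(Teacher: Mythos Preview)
Your proof is correct and follows essentially the same approach as the paper: both use \pref{p:svprod} together with Remark~\ref{rem:vanishing:rel:sv:implices:van:boundary} to obtain $\sv{\partial(M\times N)}=0$, the pushout decomposition to see asphericity, and the odd-dimensionality of $M\times N$ to compute $\chi(\partial(M\times N))=2\,\chi(M)\chi(N)\neq 0$. The only difference is that you spell out the asphericity argument via the standard Bass--Serre/pushout criterion, whereas the paper simply asserts it.
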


\begin{proof}
  As $M$ and $N$ satisfy \pref{p:svprod}, we have 
  $
  \sv{M \times N, \partial (M \times N)} = 0. 
  $
  In particular, $\sv{\partial (M \times N)} = 0$
  (Remark~\ref{rem:vanishing:rel:sv:implices:van:boundary}).
  Moreover, the boundary $\partial(M \times N) = (\partial M \times N)
  \cup_{\partial M \times \partial N} (M \times \partial N)$ is
  aspherical and has even dimension. So it suffices to show that
  $\partial (M \times N)$ has non-zero Euler characteristic. Since
  $\chi(M \times N) = \chi(M) \cdot \chi(N) \neq 0$ and $M \times N$
  is odd-dimensional, we have $\chi(\partial (M \times N)) = 2 \cdot
  \chi(M \times N) \neq 0$.
\end{proof}

\subsection{Small aspherical fillings}\label{subsec:filling:sv}

We now come to a higher order version of vanishing, which asks for
``small'' aspherical fillings of the given manifold with vanishing
Euler characteristic/simplicial volume.  We will be mainly interested
in filling aspherical $3$-manifolds with amenable fundamental group.

\begin{defi}[{\cite[p.~3]{Edmonds}}]
  Let $M$ be an oriented closed aspherical $3$-manifold. We
  define
  \[
  \Fillchi (M) \coloneqq \min_{W \in F(M)} |\chi(W)| \ ,
  \]
  where $F(M)$ denotes the class of all oriented compact aspherical $4$-manifolds $W$ with $\pi_1$-injective boundary~$M$.
\end{defi}

\begin{quest}[Edmonds~{\cite[p.~3]{Edmonds}}]\label{quest:edmonds}
  Does there exist an oriented closed connected $3$-manifold~$M$ with
  $\Fillchi(M) \neq 0$?
\end{quest}

In the same spirit, we could ask the corresponding question for the
simplicial volume:

\begin{defi}
  Let $M$ be an oriented closed connected $3$-manifold. We say that
  $M$ \emph{admits a small aspherical filling} if there exists~$W \in
  F(M)$ such that~$\sv{W,M} = 0.$
\end{defi}

The previous definition suggests the following question:

\begin{quest}\label{quest:sv:edmonds}
  Does every oriented closed aspherical $3$-manifold satisfy
  the following implication?
  \begin{equation}
    \pi_1(M) \mbox{ is amenable } \Longrightarrow M \mbox{ admits a
      small aspherical filling.}  \plabel{Fill}{p:fillsv}
  \end{equation}
\end{quest}

Question~\ref{quest:sv:edmonds} can be interpreted as a manifold
variant of the \emph{uniform boundary condition}
($\ubc$)~\cite{MM}. Recall that a space~$X$ satisfies~$\ubc$ in
dimension~$n$ if there exists a constant~$K > 0$ such that every
boundary~$c \in \, \im \partial_{n+1} \subset C_n(X;\R)$ can be filled
$K$-efficiently, i.e., there exists a chain~$b \in \, C_{n+1}(X;\R)$
such that $\partial_{n+1} b = c$ and $|b|_1 \leq K \cdot |c|_1$.
Spaces with amenable fundamental groups satisfy~$\ubc$ in all
dimensions~\cite{MM}.  Therefore, in a similar way,
Question~\ref{quest:sv:edmonds} asks whether the small fundamental
cycles of oriented closed connected $3$-manifolds~$M$ with amenable
fundamental group can be filled efficiently using relative fundamental
cycles of $4$-manifolds with $M$ as $\pi_1$-injective boundary.

Similar quantified bordism problems have been successfully studied
in more geometric contexts~\cite{CDMW}.

\begin{rem}
  \pref{p:fillsv} does \emph{not} hold in dimension~$1$. Indeed, the
  only surfaces that have the circle as $\pi_1$-injective boundary are
  hyperbolic surfaces with totally geodesic boundary. All these have
  uniformly positive simplicial volume
  (Example~\ref{ex:pos:sim:vol}(2)).

  \pref{p:fillsv} holds in dimension~$2$. The only candidate to check
  is the $2$-torus, which is the $\pi_1$-injective boundary of~$S^1
  \times (T^2)^\times$.
\end{rem}

Our interest in Question~\ref{quest:sv:edmonds} is motivated by the
following open problem in $4$-dimensional topology:

\begin{conj}[{\cite[Conjecture~1]{Edmonds}}]\label{conj:chi:1}
  There exists an oriented closed aspherical
  $4$-manifold~$M$ with~$\chi(M) = 1$.
\end{conj}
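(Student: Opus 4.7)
The conjecture is famously open, so any proof proposal can only amount to an attack strategy. My plan is to exploit the reduction given by Proposition~\ref{lemma:conj:chi:1:vs:Gromov} (restated in the introduction), which says that it suffices to establish the two conditions (a) and (b): every oriented closed aspherical $3$-manifold with amenable fundamental group bounds an oriented compact aspherical $4$-manifold $W$ with $\pi_1$-injective boundary and $\sv{W,\partial W}=0$, and every oriented closed aspherical $4$-manifold satisfies \pref{p:svchi}. Granting these, one can build a closed aspherical $4$-manifold whose Euler characteristic one can force to be~$1$ by gluing standard pieces whose relative Euler characteristics combine as in Remark~\ref{ex:additivity:Euler:characteristic}, using Theorem~\ref{thm:sv:additivity} to keep the simplicial volume zero.

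For condition~(a), I would proceed geometrically: by Perelman's geometrization, the closed aspherical $3$-manifolds with amenable fundamental group split into flat, Nil, and Sol manifolds (together with Seifert-fibered pieces over bad $2$-orbifolds with amenable orbifold fundamental group). For each geometric family I would try to exhibit an explicit aspherical $\pi_1$-injective filling with vanishing relative simplicial volume, e.g.\ via mapping tori, fibre bundles over aspherical bases, or locally co-amenable subcomplexes (Proposition~\ref{prop:loc:coam:relative}). The vanishing of $\sv{W,\partial W}$ could in favorable cases be read off from the relative vanishing theorem (Theorem~\ref{thm:relvan}) or from $F$-structures on $W$.

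For condition~(b), I would combine the bounded Euler class viewpoint of Proposition~\ref{prop:bounded:euler:gromov:quest} with the dimension-$4$ heuristic that, under the Singer conjecture, the vanishing of all $L^2$-Betti numbers in dimension~$4$ reduces to the vanishing of the middle one; this would be approached via the integral foliated simplicial volume (Section~\ref{sec:sisv}), using the Schmidt upper bound on $L^2$-Betti numbers and trying to promote $\sv{M}=0$ to vanishing of the integral foliated simplicial volume for aspherical $4$-manifolds.

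The main obstacle is that \emph{both} ingredients are themselves open: (b)~is the four-dimensional case of Gromov's Question~\ref{quest:gromov}, while (a)~is the filling question formulated in Question~\ref{quest:sv:edmonds}, and it is precisely the interplay of these two unknowns that reflects the difficulty of Edmonds' Conjecture. A direct attack via arithmetic hyperbolic $4$-manifolds is obstructed by current lower bounds on volume (and hence on $\chi$) for closed orientable hyperbolic $4$-manifolds, and attempts to decrease $\chi$ by surgery or branched covers have so far failed to produce an aspherical example with $\chi=1$. Consequently, the realistic plan is to invest effort into either (a) or (b) in restricted classes (e.g.\ restricting to $3$-manifolds with solvable fundamental group in~(a), or to aspherical $4$-manifolds with specific geometric structures in~(b)) and thereby produce partial evidence for the conjecture rather than a full proof.
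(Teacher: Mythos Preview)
This statement is a \emph{conjecture}, not a theorem, and the paper does not prove it either; it is recorded as an open problem attributed to Edmonds. Your proposal correctly recognises this and offers not a proof but an attack strategy, so there is no proof in the paper to compare against in the usual sense.

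That said, your strategy is precisely the connection the paper itself establishes: Proposition~\ref{lemma:conj:chi:1:vs:Gromov} shows that Conjecture~\ref{conj:chi:1} follows from the conjunction of \pref{p:fillsv} (your condition~(a)) and the four-dimensional case of \pref{p:svchi} (your condition~(b)). The paper's argument for that proposition is concrete and worth noting, as it is slightly sharper than your sketch: one does not need to glue ``standard pieces'' and keep track of simplicial volume of the result. Instead, Edmonds already constructed an oriented compact aspherical $4$-manifold~$W$ with $\pi_1$-injective aspherical boundary (a torus bundle over~$S^1$, hence with amenable~$\pi_1$) and $\chi(W)=1$. Condition~(a) then provides a filling~$W'$ of~$\partial W$ with $\sv{W',\partial W'}=0$, and condition~(b) together with Proposition~\ref{prop:gromov:implies:rel:gromov} forces $\chi(W',\partial W')=0$; the closed aspherical manifold~$M=W\cup_{\partial W}W'$ then has $\chi(M)=1$. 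In particular, the simplicial volume of~$M$ itself plays no role---only the vanishing of $\sv{W',\partial W'}$ is needed, and Theorem~\ref{thm:sv:additivity} is not invoked.

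Your assessment of the obstacles is accurate: both (a) and (b) are open (they are Question~\ref{quest:sv:edmonds} and the four-dimensional instance of Question~\ref{quest:gromov}, respectively), so this route currently yields only a conditional statement, which is exactly what the paper records.
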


The last conjecture and Question~\ref{quest:gromov} are connected as
follows:

\begin{prop}\label{lemma:conj:chi:1:vs:Gromov}
 Suppose that the following hold:
  \begin{itemize}
  \item[(a)] Every oriented closed aspherical $3$-manifold
   satisfies \pref{p:fillsv};
  \item[(b)] All oriented closed aspherical $4$-manifolds
    satisfy \pref{p:svchi}.
  \end{itemize}  
  Then Conjecture~\ref{conj:chi:1} holds.
\end{prop}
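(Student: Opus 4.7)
The plan is to produce a closed aspherical $4$-manifold with Euler characteristic~$1$ by capping off a known compact aspherical $4$-manifold of Euler characteristic~$1$ whose boundary consists of closed aspherical $3$-manifolds with amenable fundamental group.

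First, I would invoke the Ratcliffe--Tschantz construction of finite-volume cusped hyperbolic $4$-manifolds~$V_0$ with $\chi(V_0) = 1$. Truncating the cusps of~$V_0$ at horospherical cross-sections yields a compact aspherical $4$-manifold~$V$ that is homotopy equivalent to~$V_0$ (so $\chi(V) = 1$) and whose boundary is a disjoint union $\partial V = N_1 \sqcup \cdots \sqcup N_k$ of closed flat $3$-manifolds. Each~$N_i$ is aspherical and has virtually abelian (hence amenable) fundamental group; moreover, the inclusion $N_i \hookrightarrow V$ is $\pi_1$-injective, as it corresponds to the standard embedding of a peripheral cusp subgroup inside~$\pi_1(V_0)$.

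Next, by hypothesis~(a), each~$N_i$ admits a small aspherical filling: there is a compact aspherical $4$-manifold~$W_i$ with $\pi_1$-injective boundary~$N_i$ and $\sv{W_i, N_i} = 0$. Gluing $V$ to each $-W_i$ along the matching boundary component produces an oriented closed $4$-manifold
\[
M \coloneqq V \cup_{N_1 \sqcup \cdots \sqcup N_k} \bigl((-W_1) \sqcup \cdots \sqcup (-W_k)\bigr).
\]
Since all the pieces and the~$N_i$ are aspherical and the boundary inclusions are $\pi_1$-injective, a standard Seifert--van~Kampen argument (in the spirit of Remark~\ref{rem:double:aspherical}) shows that~$M$ is aspherical. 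Additivity of the Euler characteristic together with $\chi(N_i) = 0$ gives
\[
\chi(M) = \chi(V) + \sum_{i=1}^{k} \chi(W_i) - \sum_{i=1}^{k} \chi(N_i) = 1 + \sum_{i=1}^{k} \chi(W_i).
\]

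Finally, I would show that $\chi(W_i) = 0$ for every~$i$. Assumption~(b), together with the trivial validity of \pref{p:svchi} in dimension~$3$ (closed $3$-manifolds have vanishing Euler characteristic), lets us apply Proposition~\ref{prop:gromov:implies:rel:gromov} in dimension~$n = 4$: every compact aspherical $4$-manifold with non-empty $\pi_1$-injective aspherical boundary satisfies \pref{p:svchib}. Since $W_i$ fulfils these hypotheses and $\sv{W_i, N_i} = 0$, we obtain $\chi(W_i, N_i) = 0$; combined with Remark~\ref{rem:eulerrel} this gives $\chi(W_i) = 0$. Therefore $\chi(M) = 1$, confirming Conjecture~\ref{conj:chi:1}. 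The main obstacle is supplying the starting manifold~$V$ with the required cusp structure; this is handled by the Ratcliffe--Tschantz examples, so the remainder of the argument is a purely formal combination of~(a), (b) and Proposition~\ref{prop:gromov:implies:rel:gromov}.
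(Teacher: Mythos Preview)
Your argument is correct and follows essentially the same strategy as the paper's proof: start with a compact aspherical $4$-manifold of Euler characteristic~$1$ whose $\pi_1$-injective aspherical boundary has amenable fundamental group, cap off each boundary component with a small aspherical filling supplied by~(a), and use~(b) via Proposition~\ref{prop:gromov:implies:rel:gromov} to force the fillings to have vanishing (relative) Euler characteristic. The only difference is the choice of seed manifold: the paper uses Edmonds' example (a compact aspherical $4$-manifold with a single torus-bundle boundary component), whereas you use a truncated Ratcliffe--Tschantz cusped hyperbolic $4$-manifold with flat boundary components; both choices serve the same purpose and the rest of the argument is identical.
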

\begin{proof}
  Edmonds \cite{Edmonds} constructed an oriented compact
  aspherical $4$-manifold $W$ with non-empty $\pi_1$-injective
  aspherical boundary and $\chi(W) =
  1$~\cite[Proposition~4.1]{Edmonds}.  Moreover, $\partial W$ is a
  torus bundle over the circle~\cite[Proposition~4.1]{Edmonds}. This
  shows that $\partial W$ has amenable fundamental group.
  
  Using \pref{p:fillsv}, there exists an oriented compact aspherical $4$-manifold~$W'$ with $\pi_1$-injective
  boundary~$\partial W' \cong \partial W$ (orientation-reversing) and $\sv{W', \partial W'} =
  0$.  Moreover, by hypothesis, \pref{p:svchi} is satisfied both for
  all $3$- and $4$-dimensional oriented closed aspherical
  manifolds (\pref{p:svchi} is automatically satisfied in odd
  dimensions).  Hence, Proposition~\ref{prop:gromov:implies:rel:gromov}
  shows that $(W',\partial W')$ satisfies \pref{p:svchib}, and so
  $\chi(W', \partial W') = 0$. Therefore,
  $$
  M := W \cup_{\partial W \cong \partial W'} W'
  $$
  is an oriented closed aspherical $4$-manifold with
  $$
  \chi(M) = \chi(W) + \chi(W', \partial W') = 1 + 0 = 1 \ .
  $$
  Therefore $M$ provides the required example for
  Conjecture~\ref{conj:chi:1}.
\end{proof}

\section{Simplicial volume and cobordism categories}\label{sec:tqft}

In this section, we will introduce the amenable cobordism category and
explain how the simplicial volume extends to a symmetric monoidal
functor on this cobordism category. In other words, the simplicial
volume defines an invertible TQFT in this restricted
sense. Interestingly, it will be shown that the simplicial volume does
not extend to a functor on the whole cobordism category of smooth
oriented manifolds. This fact reflects the (non-)additivity properties
of the simplicial volume.

Viewing the simplicial volume as an invertible TQFT will allow us to
obtain some interesting information about the fundamental group of the
amenable cobordism category and its variations. Specifically, we will
show that this fundamental group is \emph{not} finitely generated
(Theorem~\ref{thm:cob}). This result is based on the following
computations of simplicial volume in dimension~$4$:

\begin{rem}\label{rem:sv4nonfin}
  For~$n \in \N$, let $\SV(n) \subset \R_{\geq 0}$ denote the set of
  simplicial volumes of all oriented closed connected $n$-manifolds.
  Then $\SV(n)$ is a countable submonoid of~$(\R_{\geq 0},
  +)$~\cite[Remark~2.3]{heuerloeh}.
  
  If $n \geq 4$, then $\SV(n)$ has no gap at zero~\cite{heuerloeh} and
  thus is non-discrete. Moreover, $\SV(3)$ contains the set of all
  volumes of oriented closed connected hyperbolic $3$-manifolds
  (scaled by~$1/v_3$) and thus is non-discrete~\cite{thurston}.
  Therefore, if $n \geq 3$, then the additive monoid~$\SV(n)$ is
  \emph{not} finitely generated.

  Moreover, $\SV(4)$ contains an infinite family of values that are
  linearly independent over~$\Q$~\cite{heuerloeh_trans}.
\end{rem}

We first consider the simpler case of the connected sum monoid and
prove that it is not finitely generated
(Section~\ref{subsec:monoid}). In Section~\ref{subsec:tqft}, we
explain how to view the simplicial volume as a symmetric monoidal
functor on the amenable cobordism category and use this description to
deduce the non-finite generation of the fundamental group of the $4$-dimensional 
amenable cobordism category (Theorem~\ref{thm:cob}). Finally, we prove
that the functor of simplicial volume cannot be extended to an
invertible TQFT on the whole cobordism category
(Proposition~\ref{prop:tqft}).

\subsection{The connected sum monoid}\label{subsec:monoid}

For $n \in \N$, let $\Mon n$ denote the monoid, whose elements are
diffeomorphism classes of oriented closed connected smooth
$n$-manifolds and whose operation is given by connected sum.  By the
classification of surfaces, the monoid~$\Mon 2$ is generated by the
$2$-torus. This finite generation fails in higher dimensions:

\begin{prop}
  Let $n \in \N_{\geq 3}$. Then the monoid~$\Mon n$ is \emph{not}
  finitely generated.
\end{prop}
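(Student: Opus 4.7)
The plan is to transport the question through the monoid homomorphism defined by the simplicial volume and reduce to the known non-finite generation of the value monoid~$\SV(n)$.

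First, I would observe that by Proposition~\ref{prop:connected:sum}(2), in every dimension~$n \geq 3$ the simplicial volume is additive under connected sum: $\sv{M \connsum N} = \sv{M} + \sv{N}$. Since the simplicial volume is a diffeomorphism (indeed homotopy) invariant, the assignment
\[
\Phi \colon \Mon n \longrightarrow (\R_{\geq 0}, +), \qquad [M] \longmapsto \sv{M},
\]
is therefore a well-defined monoid homomorphism.

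Next, I would argue by contradiction. Suppose $\Mon n$ were finitely generated, say by diffeomorphism classes~$[M_1], \dots, [M_k]$. Then every element in the image of~$\Phi$ would be a non-negative integer combination of~$\sv{M_1}, \dots, \sv{M_k}$. In particular, the image monoid~$\Phi(\Mon n) \subseteq (\R_{\geq 0}, +)$ would be finitely generated. But by definition $\Phi(\Mon n) = \SV(n)$, and Remark~\ref{rem:sv4nonfin} recalls that for $n \geq 3$ the monoid~$\SV(n)$ is \emph{not} finitely generated (using Thurston's non-discreteness of hyperbolic volumes in dimension~$3$ and the non-discreteness of~$\SV(n)$ at zero for $n \geq 4$). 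This contradiction completes the proof.

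The argument is essentially a one-line reduction to Remark~\ref{rem:sv4nonfin}, so there is no real obstacle; the only point requiring care is that additivity of simplicial volume under connected sum genuinely fails in dimension~$2$, so the hypothesis $n \geq 3$ is used precisely to ensure that $\Phi$ is a monoid homomorphism (and not merely subadditive).
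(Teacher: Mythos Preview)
Your proof is correct and follows essentially the same approach as the paper: both use the additivity of simplicial volume under connected sum (Proposition~\ref{prop:connected:sum}) to obtain a monoid homomorphism to~$(\R_{\geq 0},+)$, and then invoke Remark~\ref{rem:sv4nonfin} together with the preservation of finite generation under monoid homomorphisms. The only cosmetic difference is that you phrase the last step as a contradiction, whereas the paper states directly that images of finitely generated monoids are finitely generated.
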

\begin{proof}
  As simplicial volume is additive in dimension~$\geq 3$ with respect
  to connected sums (Proposition~\ref{prop:connected:sum}), we can
  view the simplicial volume as a monoid homomorphism
  \[ S \colon \Mon n \longrightarrow \R_{\geq 0} 
  \]
  from~$\Mon n$ to the additive monoid~$(\R_{\geq 0},+)$. The
  submonoid~$S(\Mon n)$ is not finitely generated
  (Remark~\ref{rem:sv4nonfin}).  Because finite generation is
  preserved by monoid homomorphisms, we conclude that $\Mon n$ is
  \emph{not} finitely generated.
\end{proof}

\begin{rem}
As suggested by the referee, the previous result also admits a more geometric 
proof: In every dimension~$n \geq 3$ there exist \emph{infinitely many} hyperbolic
$n$-manifolds (and none of them is a non-trivial connected sum).
\end{rem}

\subsection{Simplicial volume as a TQFT}\label{subsec:tqft}

The simplicial volume can be viewed as an (invertible)~TQFT defined on
an appropriate cobordism category of oriented smooth manifolds. This
is essentially a basic consequence of known additivity properties of
the simplicial volume. For background material about cobordism
categories and TQFTs, we refer the interested reader to the work of
Abrams~\cite{Abrams} and the book by Kock~\cite{Kock}, both of which
focus especially on the $2$-dimensional case, and to the lecture notes
of Debray, Galatius, and Palmer~\cite{DGPinvfield}, which contain an
excellent exposition of the classification of invertible TQFTs
following major recent developments in the field.

\subsubsection{Cobordism categories.} 
For~$d\in \N$, let $\Cob_d$ denote the $d$-dimensional (discrete)
cobordism category of oriented manifolds~\cite{Kock, DGPinvfield}.
The objects of~$\Cob_d$ are oriented closed smooth
$(d-1)$-manifolds~$M$, one from each diffeomorphism class. A morphism
from $M$ to~$N$ in~$\Cob_d$ is an equivalence class of $d$-dimensional
oriented smooth cobordisms~$(W; \partial_{\text{in}}W,
\partial_{\text{out}} W)$ equipped with orientation-preserving diffeomorphisms~$-M
\stackrel{\cong}{\to} \partial_{\text{in}} W$ (incoming boundary) and~$N \stackrel{\cong}{\to} \partial_{\text{out}} W$ (outgoing boundary). The
equivalence relation is given by orientation-preserving
diffeomorphisms that preserve the boundary pointwise.  Composition of
morphisms in~$\Cob_d$ is given by glueing of cobordisms, using the
given identifications of the boundary components. The
category~$\Cob_d$ is a symmetric monoidal category under the operation
of disjoint union.


\subsubsection{Amenability conditions.}
Let $G$ be a class of groups that is closed under isomorphisms. We
consider the subcategory~$\Gcob d \subset \Cob_d$ defined as follows:
The objects are those manifolds with fundamental group in~$G$ (for
each component). The morphisms are the cobordisms~$(W; M, N)$ such
that $M \hookrightarrow W$ and $N \hookrightarrow W$ are
$\pi_1$-injective (for all components). It should be noted that~$\Gcob
d$ is indeed a subcategory of~$\Cob_d$, i.e., that $\Gcob d$ is closed
under composition. To see this, we only need to check that the
$\pi_1$-injectivity of the boundary components is preserved under
composition of cobordisms. This can be shown inductively by glueing
one pair of components at a time and applying the Seifert--van~Kampen
theorem as well as the normal form theorems for amalgamated free
products and HNN extensions~\cite[Chapter~11]{rotman}. These guarantee
at each stage that the remaining boundary components are
$\pi_1$-injective in the resulting manifold.

\smallskip

The symmetric monoidal pairing of~$\Cob_d$ clearly restricts to a
symmetric monoidal pairing on~$\Gcob d$. When $G = \Am$ is the class
of all amenable groups, we will refer to~$\Amcob d$ as the
\emph{amenable cobordism category}.

\subsubsection{Simplicial volume as a TQFT on the amenable cobordism category.} 
Let $\R = (\R, +)$ denote the additive (abelian) group of real
numbers, regarded as a symmetric monoidal groupoid with one object.
Moreover, let $G$ be a class of amenable groups that is closed under
isomorphisms. The additivity of the simplicial volume with respect to
amenable glueings (Theorem~\ref{thm:sv:additivity}) and disjoint union
shows that we obtain a symmetric monoidal functor with values in the abelian 
group~$\R$ (regarded as a symmetric monoidal category):
\begin{align*}
  \sv{-} \colon \Gcob d
  \to \R,
  \ (W; M, N) \mapsto \sv{W, \partial W}.
\end{align*}
In other words, the simplicial volume defines a TQFT on~$\Gcob d$.
Because this TQFT takes values in an abelian group (hence Picard groupoid), it is invertible.

\subsubsection{The fundamental group of~$B\Gcob d$.}
Let $B \Gcob d$ denote the classifying space of the cobordism
category~$\Gcob d$.  An object~$M$ of~$\Gcob d$ determines a point
($0$-simplex) $[M] \in B \Gcob d$ and we denote by~$\Omega_M B\Gcob d$
the loop space of the classifying space~$B\Gcob d$ based
at~$[M]$. Note that the monoid of path-components of~$B\Gcob d$ is a
group (similarly to~$B\Cob_d$).  Thus, $B \Gcob d$ is an infinite loop
space, therefore, all of its path components have the same homotopy
type. After passing to the classifying spaces, the functor~$\sv{-}$
induces a group homomorphism:
\begin{align*}
\phi_M \colon \pi_1(B\Gcob d, [M])  \cong \pi_0(\Omega_M B \Gcob d)
  & \to \pi_0(\Omega B \R) \cong \R.
\end{align*}
Here, we have used the homotopy equivalence~$\Omega B \Gamma \simeq
\Gamma$ for groups~$\Gamma$. The group homomorphisms~$\phi_M$ (for all
basepoints~$M$) uniquely determine the functor~$\sv{-}$; similar facts
hold more generally for functors whose target is a groupoid~(see, for
example,~\cite{DGPinvfield}).

\begin{rem}\label{rem:tqftim}
  Every endomorphism~$(W; M, M)$ in~$\Gcob d$ defines an element $[W]
  \in \pi_0(\Omega_{M} B\Gcob d)$ whose image under the group
  homomorphism~$\varphi_M$ is the relative simplicial volume~$\sv{W,
    \partial W}$. In particular, if $[W] = [W']$, then $\sv{W,
    \partial W} = \sv{W', \partial W'}$.
\end{rem}

\begin{thm}\label{thm:cob}
  Let $G \subset \Am$ be a class of groups that is closed under
  isomorphisms and let $M$ be an object of~$\Gcob 4$. Then the
  group~$\pi_1(B \Gcob 4, [M])$ is \emph{not} finitely generated.
\end{thm}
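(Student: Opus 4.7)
The plan is to use the TQFT $\sv{\args}\colon \Gcob 4 \to \R$ to pull back the non-finite generation of (a subset of) $\SV(4)$ into $\pi_1(B\Gcob 4, [M])$. By Remark~\ref{rem:sv4nonfin}, there exists an infinite family $(N_i)_{i \in \N}$ of oriented closed connected $4$-manifolds whose simplicial volumes $\sv{N_i}$ are $\Q$-linearly independent in $\R$. Any subgroup of $\R$ containing such a family has infinite $\Q$-rank, hence is not finitely generated as an abelian group; since homomorphic images of finitely generated groups are finitely generated, it therefore suffices to realise the values $\sv{N_i}$ as $\phi_M([W_i])$ for endomorphisms $W_i$ of $M$ in $\Gcob 4$.

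To produce these endomorphisms, I would form
\[
W_i \coloneqq (M \times [0,1]) \connsum N_i,
\]
where the connected sum is taken inside a ball in the interior of the cylinder, and view $W_i$ as a cobordism from $M$ to $M$. The boundary is still $M \sqcup M$, and each inclusion factors through the homotopy equivalence $M \hookrightarrow M \times [0,1]$. Seifert--van~Kampen applied to the simply connected gluing sphere $S^3$ identifies $\pi_1(W_i) \cong \pi_1(M) \ast \pi_1(N_i)$, and the boundary inclusions correspond to the canonical injection of the left factor; hence both boundary copies of $M$ are $\pi_1$-injective in $W_i$, and $W_i$ is a legitimate morphism in $\Gcob 4$.

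To compute $\sv{W_i, \partial W_i}$, I would apply Theorem~\ref{thm:sv:additivity} twice along the simply connected (hence amenable and $\pi_1$-injective) gluing sphere $S^3$. Since $\pi_1(M)$ is amenable, Example~\ref{ex:zero:simplicial:volume}(2) gives $\sv{M \times [0,1], M \sqcup M} = 0$; amenable additivity applied to the decomposition $M \times [0,1] = (M \times [0,1])^{\circ} \cup_{S^3} D^4$ therefore forces $\sv{(M \times [0,1])^{\circ}, (M \sqcup M) \sqcup S^3} = 0$, and a parallel argument for $N_i = N_i^{\circ} \cup_{S^3} D^4$ yields $\sv{N_i^{\circ}, S^3} = \sv{N_i}$. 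A second application of additivity to $W_i = (M \times [0,1])^{\circ} \cup_{S^3} N_i^{\circ}$ then gives $\sv{W_i, \partial W_i} = \sv{N_i}$. By Remark~\ref{rem:tqftim}, $\phi_M([W_i]) = \sv{N_i}$, so $\im \phi_M$ contains the $\Q$-linearly independent family, and $\pi_1(B\Gcob 4, [M])$ cannot be finitely generated.

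The main technical point is ensuring that the interior connect-sum simultaneously preserves the boundary $\pi_1$-injectivity that defines $\Gcob 4$ and adds exactly $\sv{N_i}$ to the relative simplicial volume. Both properties hinge on the gluing $3$-sphere being simply connected, which lets Seifert--van~Kampen and amenable additivity apply without extra hypotheses; apart from this bookkeeping, the argument is essentially a clean pullback of the non-finite generation of $\SV(4)$ recorded in Remark~\ref{rem:sv4nonfin} through the TQFT $\sv{\args}$.
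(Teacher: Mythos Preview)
Your argument is correct, but it takes a different route from the paper. The paper works at the basepoint~$\varnothing$: an oriented closed connected $4$-manifold~$N$ is already an endomorphism of~$\varnothing$ in~$\Gcob 4$, so by Remark~\ref{rem:tqftim} the image of~$\phi_\varnothing$ contains all of~$\SV(4)$, and the $\Q$-linearly independent family from Remark~\ref{rem:sv4nonfin} finishes the argument immediately. The transfer to an arbitrary basepoint~$[M]$ is then handled by the infinite-loop-space structure of~$B\Gcob 4$, which makes all path-components homotopy equivalent.

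Your approach instead works directly at the basepoint~$[M]$ by manufacturing endomorphisms~$W_i = (M\times[0,1])\connsum N_i$ and computing their relative simplicial volume via amenable additivity along~$S^3$. This is a legitimate alternative: it avoids invoking basepoint-independence, at the cost of the explicit connected-sum bookkeeping (checking $\pi_1$-injectivity of the boundary and applying Theorem~\ref{thm:sv:additivity} several times). Note that the hypotheses of Theorem~\ref{thm:sv:additivity} require \emph{all} boundary components to have amenable fundamental group, not only the glued ones; this is satisfied here because the unglued components are copies of~$M$, whose fundamental group lies in~$G\subset\Am$. One small edge case: if~$M=\varnothing$ (or is disconnected), your cylinder-plus-connected-sum construction needs minor adjustment, but then your argument simply degenerates to the paper's.
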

\begin{proof}
  The relative simplicial volume of $4$-manifolds induces a group
  homomorphism
  \[ S \colon \pi_1(B \Gcob 4, [\varnothing]) \longrightarrow \R.
  \]
  The image of this group homomorphism contains the subset~$\SV(4)$
  (Remark~\ref{rem:tqftim}), which contains an infinite family of
  elements that are linearly independent over~$\Q$
  (Remark~\ref{rem:sv4nonfin}).  Therefore, the abelian group~$\im S$
  is not finitely generated and so $\pi_1(B \Gcob 4, [\varnothing])$
  is not finitely generated.

  As explained above, $\pi_1(B\Gcob 4, [M])$ is independent of the
  choice of basepoint~$[M]$, so the result follows.
\end{proof}

\begin{rem}
  We also expect corresponding results in higher dimensions.
  However, currently, not enough is known about the structure
  of~$\SV(d)$ for~$d \geq 5$.
\end{rem}

\subsubsection{Non-extendability to~$\Cob_d$.} 
Since the simplicial volume does not satisfy additivity in
general~\cite[Remark~7.9]{Frigerio}, it does not define a functor
on~$\Cob_d$.  However, it is still interesting to ask whether there
might be a different extension of the simplicial volume to general oriented 
compact manifolds with boundary which is always
additive. This question is closely related to the problem of extending
the functor~$\sv{-} \colon \Gcob d \to \R$ to the whole cobordism
category~$\Cob_d$. Based on the classification of functors with values
in a groupoid (see, e.g., \cite{DGPinvfield}), this problem is
essentially equivalent to the question of extending the
homomorphism~$\phi_\emptyset$ to $\pi_1(B\Cob_d, \varnothing)$. 

In contrast to~$\pi_1(B\Gcob 4, [M])$ (Theorem~\ref{thm:cob}), the
fundamental group of the $d$-dimensional cobordism category~$\pi_1(B\Cob_d, \varnothing)$ is well known and
simpler to describe. We first note that it agrees with the fundamental
group of the standard \emph{topologised} cobordism
category~$\mathscr{C}_d$~\cite[Section~2.4]{DGPinvfield}. This is
again independent of the choice of basepoint and can be identified
with the Reinhart bordism
group~$\mathfrak{R}_d$~\cite{Reinhart}\cite[Appendix A]{Ebert13}. We
recall that $\mathfrak{R}_d$ can be described as the group of
equivalence classes of oriented closed $d$-manifolds where the
equivalence relation is defined by cobordisms whose tangent bundle is
equipped with a nowhere-vanishing vector field that extends the normal
fields on the boundary components. This refined bordism group is known
to be a split extension of the usual oriented bordism
group~$\Omega_d^{\SO}$ by a cyclic group whose generator is
represented by the $d$-sphere. More precisely, there is a split exact
sequence:
\begin{equation} \label{reinhart}
0 \to \mathbb{Z}/\mathrm{Eul}_{d+1} \xrightarrow{[1] \mapsto [S^d]} \mathfrak{R}_d \to \Omega_d^{\SO} \to 0 \tag{$*$}
\end{equation}
where $\mathrm{Eul}_{n} = \{0\}$ if $n$ is odd, $\mathrm{Eul}_n =
2\mathbb{Z}$ if $n \equiv 2 \pmod 4$, and $\mathrm{Eul}_n =
\mathbb{Z}$ if $n$ is a multiple of~$4$. We refer to the
literature~\cite{Reinhart}\cite[Appendix A]{Ebert13}\cite{BS14} for
the properties of the bordism group~$\mathfrak{R}_d$ and the
description of the homotopy groups of~$B \mathscr{C}_d$ in terms of
bordism classes.

Using this description of~$\mathfrak{R}_d \cong \pi_1(B \Cob_d,
[\varnothing])$, we conclude below that the simplicial volume of
oriented closed $d$-manifolds cannot be extended to a functor on the
cobordism category~$\Cob_d$, i.e., there is no additive extension of
the simplicial volume~$\sv{-}$ (analogous to
Theorem~\ref{thm:sv:additivity}) to all oriented compact $d$-manifolds.

Let $\mathcal{M}_d$ denote the monoid of endomorphisms
of~$\varnothing$ in~$\Cob_d$, that is, the monoid of diffeomorphism
classes of oriented closed $d$-manifolds under the operation of
disjoint union.

\begin{prop} \label{prop:tqft} 
  Let $d \geq 2$.
  \begin{itemize}
  \item[(1)] There is no functor~$\Cob_d \to \R$ that extends the
    restriction of the simplicial volume~$\sv{-}_{|\mathcal{M}_d}
    \colon \mathcal{M}_d \to \R$ to oriented closed $d$-manifolds.
  \item[(2)] Let $G \subset \Am$ be a class of groups that is closed
    under isomorphisms. The functor~$\sv{-} \colon \Gcob d
    \longrightarrow \R$ does not admit an extension to a functor
    on~$\Cob_d$.
  \end{itemize}
\end{prop}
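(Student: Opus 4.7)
The plan is to first reduce (2) to (1), and then to prove (1) by obstructing the existence of such an extension via the identification $\pi_1(B\Cob_d,[\varnothing]) \cong \mathfrak{R}_d$ recalled in~\eqref{reinhart}.

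For the reduction: the endomorphism monoids of $\varnothing$ in $\Cob_d$ and in $\Gcob d$ both coincide with $\mathcal{M}_d$, because $\varnothing$ is an object of $\Gcob d$ and the $\pi_1$-injectivity condition on boundary components is vacuous when the cobordism has empty boundary. Any extension of $\sv{-} \colon \Gcob d \to \R$ to a functor on $\Cob_d$ therefore extends $\sv{-}_{|\mathcal{M}_d}$, so (2) follows from (1).

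To prove (1), I would argue by contradiction. Suppose $F \colon \Cob_d \to \R$ extends $\sv{-}_{|\mathcal{M}_d}$. Passing to classifying spaces and using the identifications recalled in the excerpt, the functor $F$ induces a group homomorphism
\[
  \phi \colon \mathfrak{R}_d \cong \pi_1(B\Cob_d, [\varnothing]) \longrightarrow \pi_1(B\R) = \R,
\]
whose value on the class $[M]$ of a closed $d$-manifold~$M$, viewed as a morphism $\varnothing \to \varnothing$, equals~$\sv{M}$. In particular $\phi([S^d]) = \sv{S^d} = 0$, and so by~\eqref{reinhart} the homomorphism $\phi$ vanishes on $\langle [S^d] \rangle = \ker(\mathfrak{R}_d \twoheadrightarrow \Omega_d^{\SO})$; hence $\phi$ factors through oriented bordism.

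To obtain a contradiction, take a closed connected oriented hyperbolic $d$-manifold~$H$, which exists for every $d \geq 2$ (Example~\ref{ex:pos:sim:vol}(1)), and set $M \coloneqq H \sqcup (-H)$. Additivity of the simplicial volume under disjoint union gives $\sv{M} = 2\sv{H} > 0$, while $H \times [0,1]$ witnesses $[M]=0$ in $\Omega_d^{\SO}$; therefore $\phi([M]) = 0$, which contradicts $\phi([M]) = \sv{M} > 0$. The step that requires most care is the translation of the hypothetical functor~$F$ into a bona fide group homomorphism on~$\mathfrak{R}_d$ whose values on closed $d$-manifolds agree with their simplicial volumes; this is the standard classifying-space/group-completion mechanism already invoked in the excerpt for~$\Gcob d$. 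Once this is in place, the contradiction is a clean combination of $\sv{S^d}=0$ with the positivity of the simplicial volume of $H \sqcup (-H)$.
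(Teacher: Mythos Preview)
Your proof is correct and follows essentially the same approach as the paper: both arguments pass to the induced homomorphism on~$\pi_1(B\Cob_d,[\varnothing]) \cong \mathfrak{R}_d$, use $\sv{S^d}=0$ together with~\eqref{reinhart} to descend to~$\Omega_d^{\SO}$, and then derive a contradiction from the fact that $M \sqcup (-M)$ is null-bordant while $\sv{M \sqcup (-M)} = 2\sv{M}$ is positive for suitable~$M$. Your choice to make the example explicit (a hyperbolic manifold~$H$) is a minor sharpening of the paper's ``non-trivial in general'', and your reduction of~(2) to~(1) matches the paper's one-line deduction.
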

\begin{proof}
  For~(1), note that such an extension of~$\Cob_d \to \R$ would imply
  a factorization of~$\sv{-}_{|\mathcal{M}_d} \colon \mathcal{M}_d \to
  \R$ through~$\mathfrak{R}_d \cong \pi_1(B\Cob_d, [\varnothing])$
  (see Remark~\ref{rem:tqftim}). In particular, this would imply that
  $\sv{-}$ is invariant under the Reinhart bordism relation. Moreover,
  since $\sv{S^d}=0$, it would further follow from the exact sequence
  \eqref{reinhart} that $\sv{-}$ is invariant under oriented
  bordism. This is obviously false in general, e.g., note that $M
  \sqcup (-M)$ is null-bordant as oriented closed $d$-manifold, but
  its simplicial volume is non-trivial in general. Claim~(2) follows
  directly from~(1).
\end{proof}

\begin{rem}
  The fact that $\sv{-}$ is not invariant under oriented bordism can
  also be shown as follows. Note that for~$d \in \{2, 3\}$, this fails
  because $\sv{-}$ is non-trivial but $\Omega^{\SO}_d \cong 0$. Then
  the result follows in all dimensions by taking suitable products. We
  note also that for~$d = 4$, this property can be shown to fail also
  because the oriented bordism group~$\Omega^{\SO}_4 \cong \Z$ is
  finitely generated, whereas $\SV(4)$ is not finitely generated by
  Remark~\ref{rem:sv4nonfin}.
\end{rem}

\begin{rem}\label{rem:eulerTQFT}
  In contrast to simplicial volume, the (relative) Euler
  characteristic defines a (symmetric monoidal) functor~$\chi \colon
  \Cob_d \to \Z$ (invertible TQFT), which sends $(W; M, N)$
  to~$\chi(W, M)$. Indeed, the Euler characteristic is invariant under
  the Reinhart bordism relation~\cite{Reinhart}.
\end{rem}

\section{Asphericalisations}\label{sec:asphericalisation}

The construction of aspherical closed manifolds with vanishing
simplicial volume is a key problem for
Question~\ref{quest:gromov}. There are several known constructions of
aspherical closed manifolds from non-aspherical or non-closed
manifolds. Important examples of such constructions are Davis'
reflection group trick~\cite{Davisannals, Davis:trick:proceedings,
  Davisbook} and Gromov's hyperbolization~\cite{Gromov1987,DavisJ,
  DavisJW}. The general difficulty with using these constructions to
obtain (counter)examples to Question~\ref{quest:gromov} has to do with
the difficulty of computing the simplicial volume of the resulting
aspherical closed manifolds.

In this section, we consider extensions of the class of aspherical
closed manifolds and look for interesting (counter)examples in these
contexts.  In particular, we will prove that the class of aspherical
spaces that are homology equivalent to closed manifolds, as well as
the class of closed manifolds that are homology equivalent to an
aspherical space, do not satisfy \pref{p:svchi} in general
(Theorem~\ref{thm:homologymfd}). We introduce the simplicial volume
of such spaces in Section~\ref{subsec:svhomologymfd}. The proof of 
Theorem~\ref{thm:homologymfd} will be given in
Section~\ref{subsec:kanthurston}; the proof is based on the Kan--Thurston theorem
\cite{Kan-Thurston}, which we recall in
Section~\ref{subsec:acyclicmaps}.
Finally, we end with some brief comments on known constructions of
aspherical closed manifolds and their possible connections with
Question~\ref{quest:gromov}
(Section~\ref{subsec:other_aspher}). Besides their independent
interest, we hope that the results of this section, especially,
combined with the aforementioned constructions of aspherical closed
manifolds, might provide useful tools for promoting non-aspherical or
non-closed examples to closed aspherical (counter)examples to
Question~\ref{quest:gromov}.

\subsection{Simplicial volume of spaces homology equivalent to manifolds}\label{subsec:svhomologymfd}

Our goal in this section is to extend the definition of simplicial
volume to spaces that are only homology equivalent to an oriented
closed manifold and discuss the main properties of this invariant.
This is motivated by the following basic observation:

\begin{rem}\label{key_rem}
  Suppose that $M$ is an oriented closed connected manifold
  with~$\sv{M} = 0$. Let $f \colon M \to N$ be a homology equivalence
  to an oriented closed connected manifold~$N$; in particular, this
  map has degree~$\pm 1$ and so~$\sv N = 0$ (this conclusion holds more generally 
  if the degree of $f$ is non-zero). Moreover, because 
  $f$ is a homology equivalence, it follows that $\chi(M) = \chi(N)$.  In this sense,
  \pref{p:svchi} is inherited under homology equivalences 
  between oriented closed connected manifolds. 
  
 Thus, in connection with Question~\ref{quest:gromov}, it would be interesting to understand the class of manifolds 
 which are homology equivalent to an oriented closed aspherical manifold with 
  vanishing simplicial volume. 
\end{rem}

\begin{defi} \label{def_gen_simp_vol}
  Let $X$ be a topological space, let $M$ be an oriented closed
  connected $n$-manifold and let $f \colon X \to M$ be an integral
  homology equivalence. We define the ($\R$-)\emph{fundamental class of~$(X,
    f)$} by
  \[ [X]_f := H_n(f;\R)^{-1} ([M]) \in H_n(X;\R)
  \]
  and the \emph{simplicial volume of~$X$} by
  \[ \sv X := \bigl\| [X]_f \bigr\|_1 \in \R_{\geq 0}.
  \]
\end{defi}

\begin{rem}
  The simplicial volume of such spaces is well-defined in the
  following sense. Let $(X, M^n, f)$ be as above.  In particular,
  $H_k(X;\Z)$ vanishes for~$k > n$ and $H_n(X;\Z) \cong H_n(M;\Z)
  \cong \Z$. Therefore, $H_n(f;\Z)^{-1}([M]_{\Z})$ is one of the two
  generators of~$H_n(X;\Z)$, which only differ by a sign.  In
  particular, the $\R$-fundamental class of~$(X, f)$ is independent
  of~$M$ and~$f$ up to sign. Therefore, the simplical volume of~$X$ is
  independent of the choice of~$M$ and~$f$. Clearly the definition of $\sv X$
  applies more generally whenever the map $f \colon X \to M$ induces an isomorphism on $H_n(-;\Z)$.
\end{rem}

\begin{rem}[Degree estimate]\label{rem:homologymfddeg}
  Let $(X, M, f)$ and $(Y, N, g)$ be as in
  Definition~\ref{def_gen_simp_vol}, where $M$ and $N$ are oriented
  closed connected manifolds of the same dimension~$n$. If $h \colon X
  \longrightarrow Y$ is a continuous map, then the \emph{unsigned
    homological degree}~$\mathopen|\deg h|$ is defined to be the
  unique natural number~$d \in \N$ with
  \[ H_n(h;\R)[X]_f = \pm d \cdot [Y]_g \in H_n(Y;\R). 
  \]
  As in the manifold case, we clearly have
  \[ \mathopen|\deg h| \cdot \sv Y \leq \sv X
  \]
  and it follows that the simplicial volume of~$X$ is homotopy
  invariant. Moreover, if $X$ admits a self-map~$h \colon X
  \longrightarrow X$ with~$\mathopen|\deg h| \geq 2$, then $\sv X =
  0$.  Furthermore, if $h \colon X \to Y$ is a (finite) covering map,
  then $\mathopen| \deg h| \cdot \sv{Y} = \sv{X}$, as can be seen from
  the same argument as in the manifold case.
\end{rem}

This extension of the simplicial volume to a homotopy invariant of
spaces that are only homology equivalent to an oriented closed
manifold should not be confused with the fact that the simplicial
volume is \emph{not} invariant under homology equivalences:

\begin{example}
  There exist oriented closed connected non-positively curved (and
  hence aspherical) homology $4$-spheres~$M$~\cite{RatcliffeT}; in
  particular, $M$ is homology equivalent to~$S^4$ and a result by
  Fujiwara and Manning~\cite[Corollary~2.5]{FMfilling} shows that $\sv
  M > 0 = \sv{S^4}.$
\end{example}

\subsection{Acyclic maps and plus contructions} \label{subsec:acyclicmaps}

We review briefly the definition and basic properties of acyclic maps
and refer to the literature~\cite{HHacyclic, Ra-acyclic} for more
details. A map~$f \colon X \longrightarrow Y$ is
\emph{acyclic} if the induced homomorphism
$$H_*(f; \mathcal{A}) \colon H_*(X;f^*\mathcal{A}) \longrightarrow
H_*(Y;\mathcal{A})$$ is an isomorphism for every local coefficient
system~$\mathcal{A}$ of abelian groups on~$Y$; in particular, $f$
induces isomorphisms on singular homology and cohomology with both
integral and real coefficients. Equivalently, a map~$f \colon X \to Y$
is acyclic if its homotopy fibers have trivial integral
homology. Every acyclic map~$f \colon X \to Y$ between path-connected
based spaces arises up to weak homotopy equivalence as the plus
construction~$\iota_P \colon X \to X^+_P$ with respect to a normal
perfect subgroup~$P \unlhd \pi_1(X)$. In this case, we have
$$\pi_1(X^+_P) \cong \pi_1(X)/P.$$

\begin{thm}[Kan--Thurston \cite{Kan-Thurston,BDH, Maunder}] \label{Kan-Thurston}
  For every path-connected based topological space~$X$, there is a
  group~$G_X$ together with an acyclic (based) map~$f_X \colon
  K(G_X,1) \to X$. Moreover, $G_X$ and $f_X$ can be chosen to be
  natural in~$X$.
\end{thm}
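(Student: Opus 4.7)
The plan is to follow the classical strategy of Kan and Thurston (with simplifications due to Baumslag--Dyer--Heller and Maunder): reduce to simplicial complexes, and there build $K(G_X,1)$ by gluing together acyclic group models attached to each simplex. The construction proceeds in three stages.

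\textbf{Step 1 (Acyclic models for simplices).} For each $n \geq 0$ I would construct a group $T_n$ together with a map $\varphi_n \colon K(T_n,1) \to \Delta^n$ which is acyclic; since $\Delta^n$ is contractible, this reduces to requiring $K(T_n,1)$ to be acyclic as a space. Moreover, for each face inclusion $\delta \colon \Delta^{n-1} \hookrightarrow \Delta^n$ I want a $\pi_1$-injective group homomorphism $\delta_* \colon T_{n-1} \hookrightarrow T_n$ lifting $\delta$, together with degeneracy homomorphisms, so that $T_\bullet$ forms a cosimplicial group. Existence of single acyclic groups (for instance Higman's four-generator group, or the mitosis construction of Baumslag--Dyer--Heller) combined with iterated amalgamated free products and HNN extensions allows one to build $T_n$ inductively: at stage $n$, start from the colimit of the $T_k$ ($k<n$) over the proper faces of $\Delta^n$ and adjoin generators and relations that simultaneously kill the remaining integral homology in the classifying space, while the normal-form theorems for amalgamated products and HNN extensions ensure that the previously constructed boundary subgroups embed.

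\textbf{Step 2 (Simplicial complexes).} Given a simplicial complex $X$, I would define
\[
K(G_X,1) := \operatorname{colim}_{\sigma \in X} K(T_{\dim \sigma},1),
\]
where the colimit runs over the category of simplices of $X$, and let $G_X$ be the corresponding colimit of groups. The assembled map $f_X \colon K(G_X,1) \to X$ is built from the $\varphi_n$. By induction on the skeleta of $X$, using Seifert--van Kampen at each step (with the $\pi_1$-injectivity of the face inclusions guaranteeing that the categorical pushouts of groups really compute the fundamental groups of the corresponding homotopy pushouts), one sees that $K(G_X,1)$ is indeed aspherical. Acyclicity of $f_X$ then follows by induction via Mayer--Vietoris in (twisted) integral homology, comparing the skeletal filtrations of the source and target.

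\textbf{Step 3 (Arbitrary spaces).} For a general based path-connected space $X$, take $|S_\bullet X|$ to be the geometric realisation of the singular simplicial set (or a functorial CW-approximation) and apply Step 2 to it. Composing with the natural weak equivalence $|S_\bullet X| \to X$ produces the desired acyclic map $f_X \colon K(G_X,1) \to X$. Since the whole construction was performed functorially in the simplicial complex (and in the singular simplex functor), $G_X$ and $f_X$ are natural in $X$.

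\textbf{Main obstacle.} The technical heart of the argument is Step~1: producing individual acyclic groups is easy, but assembling a full cosimplicial system with injective face homomorphisms and acyclic classifying spaces at every level requires a careful inductive amalgamation in which each new extension must kill relative homology without destroying the injectivity of the boundary inclusions previously put in place. All subsequent steps are essentially formal manipulations with colimits, Mayer--Vietoris, and van Kampen once this delicate cosimplicial building block has been established.
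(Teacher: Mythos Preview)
The paper does not actually prove this theorem: its ``proof'' consists entirely of citations to Kan--Thurston, Baumslag--Dyer--Heller, and Maunder, together with the remark that the latter two constructions preserve homotopy finiteness at the cost of weaker functoriality. So there is nothing substantive to compare your argument against in the paper itself.

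That said, your sketch is a fair high-level summary of the strategy in those references, closer in spirit to the Baumslag--Dyer--Heller and Maunder versions (build acyclic replacements simplex-by-simplex and glue) than to the original Kan--Thurston argument. Two small points are worth flagging. First, in Step~2 you work with simplicial \emph{complexes}, but in Step~3 you feed in the realisation of the singular simplicial \emph{set}; to make the construction genuinely go through the full category~$\Delta$ (with degeneracies), as you claim in Step~1, requires more care than the semi-simplicial/simplicial-complex version, and this is exactly where the trade-off the paper alludes to arises: the simplicial-complex constructions of BDH and Maunder are functorial only in simplicial maps, whereas full naturality in continuous maps (as in the original Kan--Thurston paper) uses a different, more set-theoretic, construction. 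Second, your identification of Step~1 as the technical heart is correct, but ``adjoin generators and relations that simultaneously kill the remaining integral homology'' understates what is needed: one must kill homology of the \emph{classifying space} by enlarging the group, and the mechanism for this (mitosis embeddings, or iterated cone constructions) is the genuinely non-formal input.
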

\begin{proof}
  The original functorial construction of~$(G_X, f_X \colon K(G_X,1)
  \to X)$ is due to Kan and Thurston \cite{Kan-Thurston}. Alternative
  constructions and refinements were obtained by Baumslag, Dyer,
  Heller~\cite{BDH}, and Maunder~\cite{Maunder}. (These constructions
  are also shown to preserve properties of homotopy finiteness, but
  they satisfy weaker functoriality properties in general.)
\end{proof}

\subsection{Using the Kan--Thurston theorem}\label{subsec:kanthurston}

The Kan--Thurston theorem (Theorem~\ref{Kan-Thurston}) has the
following consequence in connection with Question~\ref{quest:gromov}.

\begin{thm}\label{thm:homologymfd}
  Let $n \in \N_{\geq 2}$ be even. 
  \begin{enumerate}
  \item There exist aspherical CW-complexes~$X$ that admit an acyclic
    map $X \to M$ to an oriented closed connected $n$-manifold~$M$,
    and satisfy both $\sv{X} = 0$ and $\chi(X) \neq 0$. In particular,
    these aspherical spaces do \emph{not} satisfy \pref{p:svchi}.
  \item There exist oriented closed connected $n$-manifolds~$M$ that
    admit an acyclic map~$X \to M$ from an aspherical CW-complex~$X$,
    and satisfy both $\sv{M} = 0$ and $\chi(M) \neq 0$. In particular,
    these manifolds do \emph{not} satisfy \pref{p:svchi}.
  \end{enumerate}
\end{thm}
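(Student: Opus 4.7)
The plan is to establish both (1) and (2) from a single Kan--Thurston-type construction. For the target manifold, I would choose $M = S^n$ with even $n \geq 2$: since $\pi_1(S^n)$ is trivial, the sphere has $\sv{S^n} = 0$ (Example~\ref{ex:zero:simplicial:volume}(1)), while $\chi(S^n) = 2$. Thus $S^n$ already realises the ``manifold side'' of (2), and the task reduces to producing an aspherical CW-complex $X$ and an acyclic map $f \colon X \to S^n$ such that $\sv{X} = 0$ and $\chi(X) \neq 0$.

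To obtain such $X$ and $f$, I would invoke a version of the Kan--Thurston theorem (Theorem~\ref{Kan-Thurston}) in which the output group $G := G_{S^n}$ is additionally \emph{boundedly acyclic} in degree~$n$, so that $H_b^n(G;\R) = 0$. Writing $X := K(G,1)$ with the acyclic map $f \colon X \to S^n$ provided by the construction, one then checks three points. First, $X$ is aspherical by definition. Second, acyclicity of $f$ makes it an integral homology equivalence, so in particular $\chi(X) = \chi(S^n) = 2 \neq 0$. Third, applying the duality principle (Proposition~\ref{prop:duality}) to the fundamental class $[X]_f \in H_n(X;\R)$ from Definition~\ref{def_gen_simp_vol}, the vanishing of $H_b^n(X;\R) \cong H_b^n(G;\R)$ forces $\sv{X} = \|[X]_f\|_1 = 0$. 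This yields~(1), and~(2) is then obtained by retaining the same triple $(X, M = S^n, f)$.

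The main obstacle is securing a Kan--Thurston construction whose group is boundedly acyclic in the relevant degree. This step relies on combining a Baumslag--Dyer--Heller-style refinement of Kan--Thurston (whose building blocks are binate or mitotic groups) with the fact that such groups are boundedly acyclic, as available from the recent bounded cohomology literature referenced in the paper. It is worth emphasising that bounded acyclicity is genuinely needed for this approach: the classical Kan--Thurston theorem on its own only yields the norm-decreasing inequality $\sv{S^n} \leq \sv{X}$ via the induced map $f_*$, which runs in the wrong direction. Once the boundedly acyclic Kan--Thurston variant is in hand, the three verifications above are essentially immediate, and the choice of $S^n$ ensures both $\sv{M} = 0$ and $\chi(M) \neq 0$ on the manifold side.
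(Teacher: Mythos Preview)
Your approach is plausible but takes a genuinely different route from the paper, and it leans on a stronger input than is actually needed.

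The paper avoids bounded acyclicity entirely. Instead, it exploits the \emph{functoriality} clause of the Kan--Thurston construction (Theorem~\ref{Kan-Thurston}): applying it to a self-map $h \colon S^n \to S^n$ with $|\deg h| \geq 2$ yields a commutative square
\[
\xymatrix{
X \ar[r]^{H} \ar[d]_{f} & X \ar[d]^{f} \\
S^n \ar[r]^{h} & S^n
}
\]
in which $|\deg H| = |\deg h| \geq 2$, because $f$ is a homology isomorphism. The degree estimate of Remark~\ref{rem:homologymfddeg} then gives $\sv{X} = 0$ directly, and $\chi(X) = \chi(S^n) = 2$ follows from acyclicity of~$f$. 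No strengthening of Kan--Thurston is required---only the naturality already stated in Theorem~\ref{Kan-Thurston}.

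Your route via a boundedly acyclic~$G_X$ would also give the conclusion, and results of this type do exist in the literature (mitotic and binate groups are boundedly acyclic, and one can run a Baumslag--Dyer--Heller-style construction through such groups). But this is a substantially heavier input, and the version of Kan--Thurston stated in the paper does not include it, so you would need to supply or cite it separately; you are right to flag this as the main obstacle in your argument. One incidental advantage of your approach is that it would apply to any target manifold~$M$ with $\sv{M}=0$ and $\chi(M)\neq 0$, whereas the paper's argument needs~$M$ to admit a self-map of degree~$\geq 2$. For the theorem as stated, however, the self-map trick on~$S^n$ is both more elementary and entirely self-contained within the paper's framework.
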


\begin{proof}
  Let $M$ be an oriented closed connected $n$-manifold that has a
  (based) self-map~$h \colon M \to M$ with~$\mathopen|\deg h |\geq 2$
  and satisfies~$\chi (M) \neq 0$. For example, as $n$ is even, we may
  choose~$M = S^n$.
  
  \emph{Ad~1}. By Theorem~\ref{Kan-Thurston} (and the functoriality of
  the construction), there exists an aspherical CW-complex~$X$ with an
  acyclic map $f \colon X \to M$ and a map~$H \colon X \to X$ that
  makes the following square commutative:
  $$
  \xymatrix{
    X \ar[d]_f \ar[r]^H & X \ar[d]^f \\
    M \ar[r]^h & M. 
  }
  $$
  It follows that $\mathopen| \deg H | = \mathopen|\deg h| \geq
  2$. Thus, $\sv X = 0$ (Remark~\ref{rem:homologymfddeg}). Moreover,
  as $f$ is a homology equivalence, it follows that $\chi(X) = \chi(M)
  \neq 0$.

  \emph{Ad~2}. It is sufficient to apply Theorem~\ref{Kan-Thurston} to~$M$.
\end{proof}

We give some further context on possible improvements of
Theorem~\ref{thm:homologymfd}:

\begin{rem}[Poincar\'e duality]
  Hausmann~\cite{Hausmann86} proved that the group~$G_X$ in the
  Kan--Thurston theorem (Theorem~\ref{Kan-Thurston}) can be chosen to
  be a duality group when $X$ is homotopy finite. (A related
  interesting refinement of the Kan--Thurston theorem has also been
  obtained more recently by R.~Kim~\cite{KimR}.) We do not know
  whether we can obtain homotopy finite examples in Theorem~\ref{thm:homologymfd} and whether
  Hausmann's construction can also be made sufficiently functorial for
  the purpose of the proof above. Thus, it remains open whether
  Theorem~\ref{thm:homologymfd} can be strengthened to produce
  examples where the fundamental group is a duality group.

  We note that it remains an open problem whether every finitely
  presented Poincar\'e duality group is the fundamental group of a
  closed aspherical manifold~\cite{Davis:PDG}. In this connection, we
  also recall the following question~\cite[Question~(ii)
    p.~254]{Kan-Thurston}: Is every oriented closed connected
  $n$-manifold, $n \geq 4$, homology equivalent to an oriented closed
  aspherical manifold? If this question has an affirmative
  answer, then there exist aspherical homology spheres in all high
  dimensions. This would contradict a version of the Hopf conjecture,
  which claims that the Euler characteristic of every
  oriented closed aspherical $2k$-manifold is either zero or its 
  sign is~$(-1)^k$~\cite{Davis:Hopf:Sign}.
\end{rem}

\subsection{Further comments}\label{subsec:other_aspher}

Davis' reflection group trick~\cite{Davisannals, Davis:trick:proceedings,
  Davisbook} takes an oriented compact aspherical
$n$-manifold~$(W, \partial W)$ and constructs an oriented
\emph{closed} aspherical $n$-manifold~$M$ by reflecting~$W$
along so-called \emph{pieces} of the boundary of~$W$. The construction
also yields a retraction
$$
r \colon M \to W \;
$$
i.e., $r \circ i = \id_W$, where $i \colon W \to M$ is the inclusion.
In particular, $\pi_1(r)$ and $H_*(r)$ are epimorphisms; $H^*_{b}(r)$
and $H^*(r)$ are monomorphisms. Starting from``exotic''~$W$, this
method can be used to construct ``exotic'' closed aspherical
manifolds. Note that we may choose~$W$ to be an oriented compact
manifold that is homotopy equivalent to~$X$ as in
Theorem~\ref{thm:homologymfd}(1), assuming that the space~$X$ can also
be chosen to be homotopy finite~\cite{BDH}.

There exists an explicit formula for computing the Euler
characteristic of the manifold~$M$ in terms of the Euler
characteristic of the input manifold~$W$ and the combinatorics of the
pieces of~$\partial W$~\cite[p. 218]{Davis:trick:proceedings}.
However, in the case of the simplicial volume, the situation is more
delicate. The Davis reflection group trick can be viewed as a refined
version of doubling manifolds with boundary, where the refinement is
given by the combinatorics of the pieces of~$\partial W$. In order to
generalise Example~\ref{exa:double:sv} to this setting, it would be
desirable to find input manifolds~$W$ with~$\sv{W,\partial W} = 0$ and
where additionally the simplicial volume of~$W$ can be realised by
small relative fundamental cycles, whose behaviour on~$\partial W$ is
adapted to the combinatorics on the pieces of~$\partial W$.
Particularly interesting input candidates would be the examples from
Remark~\ref{rem:svchibfails} or
Proposition~\ref{prop:vanishing:triple:product}.

\medskip

On the other hand, Gromov's hyperbolization~\cite{Gromov1987,DavisJ,
  DavisJW} is a construction that takes an oriented closed connected triangulated
manifold~$N$ and produces an oriented closed aspherical
manifold~$h(N)$ together with a degree~$1$ map
$$
c \colon h(N) \to N.
$$
In particular,
$$
\sv{h(N)} \geq \sv{N} \ .
$$

In addition, $h(N)$ is a smooth manifold if $N$ is smooth, and $h$
preserves the stable tangent bundle, i.e., the vector bundles
$T(h(N))$ and~$c^*TN$ are stably isomorphic. This implies that
hyperbolization preserves the characteristic classes and numbers of
closed smooth manifolds. Also, $h(N)$ and~$N$ are (oriented)
cobordant.  Since the mod~$2$ Euler characteristic of~$N$ is
determined by the bordism class of~$N$, it is natural to consider the
hyperbolization in connection with the following weak version of
Question~\ref{quest:gromov}:

\begin{quest}\label{quest:gromov_parity}
  Let $M$ be an oriented closed aspherical manifold. Does
  the following implication hold?
  \begin{equation}
    \sv M = 0  \Longrightarrow \chi(M) \text{ is even? } 
    \plabel{SV$\chi \mathrm{(mod 2)}$}{p:svchi2}
  \end{equation}
\end{quest}

Assuming that $M$ is smooth, the property
``$\chi(M)$ is even'' is equivalent to the vanishing of the top
Stiefel-Whitney class of~$M$. 

It would be interesting to find~$N$ as above with the
property~$\sv{h(N)} = 0$. We note here that the simplicial volume is
always positive in the case of \emph{strict} hyperbolization (in the
sense of Charney and Davis~\cite{charney-davis_strict}). A relative
version of this construction, which might still be relevant in
connection with Question~\ref{quest:gromov:relative}, has also been
studied by Belegradek~\cite{Belegradek:hyperbolization}.

\section{Stable integral simplicial volume}\label{sec:sisv}

Stable integral simplicial volume and integral foliated simplicial
volume are versions of the simplicial volume that admit Poincar\'e
duality estimates for Betti numbers and for the Euler characteristic.

In this section, we recall definitions, basic properties, and known
examples of stable integral simplicial volume, with a focus on the
relative case and the connection with \pref{p:svchib}. Moreover, we
quickly outline the relation with the integral foliated simplicial
volume.

\begin{defi}[Stable integral simplicial volume]
  Let $(M, \partial M)$ be an oriented compact connected manifold~$M$ with (possibly
  empty) boundary~$\partial M$.  The \emph{stable integral simplicial
    volume} of~$(M,\partial M)$ is defined as
  \[
  \sv{M, \partial M}_\mathbb{Z}^\infty \coloneqq
  \inf \Bigl\{\frac{\isv{N, \partial N}}{\mathopen|\deg f|} \Bigm|  (N,f) \in C(M) \Bigr \},
  \]
  where $C(M)$ denotes the class of all finite (connected) coverings
  of~$M$.
\end{defi}

\subsection{Estimates for the Betti numbers and the Euler characteristic}\label{subsec:betti}

The key observation is that Poincar\'e duality leads to Betti number
estimates for simplicial volumes with respect to sufficiently integral
coefficient rings:

\begin{prop}[\protect{\cite[Example~14.28]{lueck_l2}\cite[p.~307]{gromovmetric}\cite[Proposition~3.2]{kionkeloeh}}]\label{prop:betti}
  Let $R$ be a normed principal ideal domain with~$|x| \geq 1$ for
  all~$x \in R \setminus \{0\}$.  Let $(M, \partial M)$ be an oriented compact
  connected $n$-manifold with (possibly empty) boundary $\partial
  M$. Then, for all~$k\in \N$,
  \[ b_k(M;R) \leq \|M,\partial M\|_R
  \]
  In particular,
  $\bigl|\chi(M,\partial M)\bigr| = \bigl|\chi(M)\bigr|
     \leq (n+1) \cdot \|M,\partial M\|_R.
  $
\end{prop}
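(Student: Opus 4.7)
The plan is to exploit Poincar\'e--Lefschetz duality together with a cochain-level analysis of the cap product with a small $R$-fundamental cycle.

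First, I would fix $\varepsilon > 0$ and choose a relative $R$-fundamental cycle $c = \sum_{i=1}^N a_i \sigma_i \in C_n(M;R)$ with pairwise distinct singular simplices $\sigma_i \colon \Delta^n \to M$, coefficients $a_i \in R \setminus \{0\}$, and $\|c\|_R = \sum_{i=1}^N |a_i| \leq \|M,\partial M\|_R + \varepsilon$. The norm hypothesis $|x| \geq 1$ for $x \in R \setminus \{0\}$ gives the key combinatorial input $N \leq \|c\|_R$; this is the one place where the normed-PID hypothesis enters the argument.

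Next, I would invoke the Poincar\'e--Lefschetz duality isomorphism $- \cap [M, \partial M]_R \colon H^{n-k}(M, \partial M; R) \xrightarrow{\cong} H_k(M;R)$, valid for oriented compact $n$-manifolds with boundary over any PID. For a cocycle $\varphi \in C^{n-k}(M,\partial M;R)$, the Alexander--Whitney formula for the cap product yields $\varphi \cap c = \sum_{i=1}^N a_i \cdot \varphi(\sigma_i^{\mathrm{back}}) \cdot \sigma_i^{\mathrm{front}}$, where $\sigma_i^{\mathrm{front}}$ and $\sigma_i^{\mathrm{back}}$ denote the front $k$-face and back $(n-k)$-face of $\sigma_i$, respectively. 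Hence every cap product $\varphi \cap c$ lies in the $R$-submodule $F_k \subseteq C_k(M;R)$ generated by the $N$ front $k$-faces of the~$\sigma_i$. Since the duality map is surjective, every class in $H_k(M;R)$ is represented by some cycle in $F_k$, so $H_k(M;R)$ is a quotient of a submodule of the $N$-generated module $F_k$. Over the PID~$R$ this yields $b_k(M;R) = \mathrm{rank}_R H_k(M;R) \leq N \leq \|c\|_R \leq \|M,\partial M\|_R + \varepsilon$, and letting $\varepsilon \to 0$ gives the Betti number bound.

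Finally, the Euler characteristic estimate follows from the standard identity $\chi(M) = \sum_{k=0}^n (-1)^k b_k(M;R)$ over a PID (e.g., via tensoring with the field of fractions of~$R$, or via the Hopf trace formula applied to a free chain complex computing $H_*(M;R)$), combined with Remark~\ref{rem:eulerrel} for $|\chi(M,\partial M)| = |\chi(M)|$. Summing the Betti number bounds gives $|\chi(M)| \leq \sum_{k=0}^n b_k(M;R) \leq (n+1) \cdot \|M,\partial M\|_R$. The only real subtlety is the passage from ``the cap-product image lies in the $N$-generated module $F_k$'' to ``$\mathrm{rank}_R H_k(M;R) \leq N$''; this uses that over a PID every submodule of an $N$-generated module has rank at most $N$, and that rank cannot increase under passage to quotients. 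No further obstacle is expected.
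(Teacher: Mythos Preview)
Your proof is correct and follows precisely the standard Poincar\'e--Lefschetz duality argument: cap with a near-optimal $R$-fundamental cycle, observe via the Alexander--Whitney formula that the image lands in a module generated by at most $N \leq \|c\|_R$ front faces, and bound the rank over the PID~$R$. The paper itself does not supply a proof of this proposition but only cites the references (L\"uck, Gromov, Kionke--L\"oh), and your argument is exactly the one found there, so there is nothing to add.
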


Since the Euler characteristic is multiplicative with respect to
finite coverings, this estimate also implies corresponding estimates
for the stable integral simplicial volume~\cite[Proposition~6.1]{FFM}:

\begin{cor}\label{cor:chi:sisv}
  Let $(M,\partial M)$ be an oriented compact connected $n$-manifold with
  (possibly empty) boundary $\partial M$. Then
  \[ \bigl| \chi(M,\partial M) \bigr| \leq (n + 1) \cdot \stisv {M,\partial M}.
  \]
\end{cor}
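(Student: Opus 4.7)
The plan is to reduce the claim to Proposition~\ref{prop:betti} applied with $R = \Z$, combined with the multiplicativity of the Euler characteristic under finite coverings.

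First, I would pick an arbitrary finite connected covering $(N,f) \in C(M)$ of degree $d = \mathopen|\deg f|$. Applying Proposition~\ref{prop:betti} to the oriented compact connected $n$-manifold $(N, \partial N)$ with $R = \Z$ (which is a normed PID satisfying $|x| \geq 1$ for every nonzero~$x$) yields
\[
\bigl| \chi(N, \partial N) \bigr| \leq (n+1) \cdot \isv{N, \partial N}.
\]
Next, I would invoke the multiplicativity of the (relative) Euler characteristic under finite coverings, which gives $\chi(N, \partial N) = d \cdot \chi(M, \partial M)$ (using that finite covers of compact manifolds restrict to finite covers of the boundary of the same degree, so that both $\chi(N) = d \cdot \chi(M)$ and $\chi(\partial N) = d \cdot \chi(\partial M)$).

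Combining these two facts produces the inequality
\[
d \cdot \bigl|\chi(M,\partial M)\bigr| \leq (n+1) \cdot \isv{N, \partial N},
\]
which, after dividing by $d$, can be rewritten as
\[
\bigl|\chi(M,\partial M)\bigr| \leq (n+1) \cdot \frac{\isv{N, \partial N}}{\mathopen|\deg f|}.
\]
Finally, I would take the infimum of the right-hand side over all $(N,f) \in C(M)$; by the definition of $\stisv{M,\partial M}$ this yields the desired bound. There is no genuine obstacle here: the only point that needs a moment of care is the multiplicativity $\chi(N,\partial N) = d \cdot \chi(M,\partial M)$ for the \emph{relative} Euler characteristic, which follows immediately from the analogous formulas for $\chi(N)$ and $\chi(\partial N)$ under finite coverings.
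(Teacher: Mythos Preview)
Your proof is correct and follows exactly the approach the paper indicates: apply Proposition~\ref{prop:betti} with $R=\Z$ to each finite covering, use multiplicativity of the Euler characteristic under finite coverings, and pass to the infimum.
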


\subsection{Integral approximation problems}

This estimate for the Euler characteristic
(Corollary~\ref{cor:chi:sisv}) suggests the following question:

\begin{quest}\label{quest:integral:approx:absolute:version}
  Let $M$ be an oriented closed aspherical manifold with
  residually finite fundamental group. Does the following implication
  hold?
  \begin{equation}
    \sv M = 0 \Longrightarrow \stisv M = 0.
    \plabel{SV$_{\Z}^\infty$}{p:svapprox}
  \end{equation}
\end{quest}

The corresponding approximation question for non-zero values in
general has a negative answer. For example, oriented closed connected
hyperbolic manifolds~$M$ of dimension at least~$4$ satisfy~$\sv M <
\stisv M$~\cite{FFM}.  Moreover, Proposition~\ref{prop:connected:sum}
and Corollary~\ref{cor:chi:sisv} show that \pref{p:svapprox} does not
hold for the connected sum of oriented closed aspherical
manifolds with zero simplicial volume and zero Euler characteristic
(of even dimension at least~$4$).  However, these manifolds are never
aspherical (Remark~\ref{rem:connected:sum:not:asph}).

\medskip

In Question~\ref{quest:integral:approx:absolute:version}, one usually
adds the hypothesis of \emph{residual finiteness} to ensure the
existence of ``enough'' finite coverings. However, there are also no
known examples of oriented closed aspherical manifolds with
non-residually-finite fundamental group such that the vanishing
behaviour of the ordinary simplicial volume is different from that of
the stable integral simplicial volume. One possible strategy to
produce such examples is to use Davis' reflection group trick
(Section~\ref{subsec:other_aspher}) to construct oriented closed
aspherical manifolds whose fundamental group is not
residually finite. However, as explained in
Section~\ref{subsec:other_aspher}, it seems to be difficult to gain
enough control on the (stable integral) simplicial volume when
performing this construction.

\medskip

What about Question~\ref{quest:integral:approx:absolute:version} for
manifolds with boundary?  Similarly to \pref{p:svchib}, also in the
case of the stable integral simplicial volume, we need to impose
additional boundary conditions:

\begin{example}
  Let $M$ be the product of three punctured tori as in
  Example~\ref{example:triple:product:tori}. Then $M$ is aspherical
  and
  \[ \sv{M, \partial M} = 0 \quad \quad \mbox{ and } \quad \quad \chi(M, \partial M) = -1.
  \] 
  Hence, Corollary~\ref{cor:chi:sisv} implies that $\stisv{M,\partial
    M} \neq 0$, even though $\pi_1(M)$ is residually finite.
\end{example}

\begin{quest}\label{quest:int:approx:pi:1:injective}
  Let $M$ be an oriented compact aspherical manifold with
  residually finite fundamental group and non-empty $\pi_1$-injective
  aspherical boundary $\partial M$. Does the following implication hold?
  \begin{equation}
    \sv{M,\partial M} = 0 \Longrightarrow \stisv{M,\partial M} = 0.
    \plabel{SV$_\Z^\infty,\partial$}{p:svapproxb}
  \end{equation}
\end{quest}

\medskip

We observe the following diagram of implications as a consequence of
Corollary~\ref{cor:chi:sisv} and
Proposition~\ref{prop:gromov:implies:rel:gromov}:
\[ \xymatrix{%
  \text{\pref{p:svchi}} \ar@{=>}[r]
  & \text{\pref{p:svchib}}
  \\
  \text{\pref{p:svapprox}} \ar@{=>}[u]
  & \text{\pref{p:svapproxb}} \ar@{=>}[u]
  }
\]
We do not know whether the diagram can be completed with a lower
horizontal implication. The main issue is the lack of suitably general
additivity results concerning the integral simplicial volume.  For
example, given an oriented compact aspherical manifold~$M$
with residually finite fundamental group and non-empty
$\pi_1$-injective aspherical boundary, it is not clear what the
vanishing of~$\stisv{D(M)}$ has to say about the vanishing
of~$\stisv{M, \partial M}$.

Conversely, however, the vanishing of~$\stisv{M,\partial M}$ implies
the vanishing of~$\stisv{D(M)}$:

\begin{prop}\label{prop:sisv:relative:double}
  Let $(M, \partial M)$ be an oriented compact connected $n$-manifold with non-empty
  boundary $\partial M$. Then:
  \begin{enumerate}
  \item $\sv{M, \partial M}_\mathbb{Z}^\infty \geq \sv{\partial M}_\mathbb{Z}^\infty \slash (n+1)$;
  \item $\sv{D(M)}_\mathbb{Z}^\infty \leq 2 \cdot \sv{M, \partial
    M}_\mathbb{Z}^\infty$.
\end{enumerate}
\end{prop}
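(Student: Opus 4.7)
The proof splits naturally into two estimates that refine, to the stable integral setting, the elementary bound $\sv{N,\partial N} \geq \sv{\partial N}/(n+1)$ from Remark~\ref{rem:vanishing:rel:sv:implices:van:boundary} (in its integral version, see Remark~\ref{rem:sv:with:int:coeff}) and the doubling inequality from Example~\ref{exa:double:sv}.

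For part~(1), I would fix a finite connected covering $f \colon N \to M$ of degree $d$ and chase the boundary. The restriction $f|_{\partial N} \colon \partial N \to \partial M$ is a $d$-sheeted covering, and each connected component $N_\alpha$ of $\partial N$ is a finite connected cover of some component $\partial_{i(\alpha)} M$ of $\partial M$ of a certain degree $d_\alpha$. These degrees satisfy the key identity $\sum_{\alpha : i(\alpha) = i} d_\alpha = d$ for every component $\partial_i M$. By the definition of $\stisv{\partial_{i(\alpha)} M}$ as an infimum over finite connected coverings, each component contributes $\isv{N_\alpha} \geq d_\alpha \cdot \stisv{\partial_{i(\alpha)} M}$. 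Summing first over $\alpha$ with $i(\alpha) = i$ and then over $i$, and using the convention that the stable integral simplicial volume of a disconnected manifold is the sum over its components, yields $\isv{\partial N} \geq d \cdot \stisv{\partial M}$. Combining with the integral boundary estimate $\isv{N, \partial N} \geq \isv{\partial N}/(n+1)$ and dividing by $d$ gives $\isv{N, \partial N}/d \geq \stisv{\partial M}/(n+1)$; taking the infimum over all $(N, f)$ establishes the claim.

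For part~(2), the strategy is to lift the doubling construction through finite coverings. Given a finite connected covering $f \colon N \to M$ of degree $d$, the double $D(N) = N \cup_{\partial N} (-N)$ is connected (since $N$ is connected and $\partial N \neq \emptyset$) and admits a canonical $d$-sheeted covering $D(f) \colon D(N) \to D(M)$. The chain-level argument in Example~\ref{exa:double:sv} works verbatim with integer coefficients: glueing a relative integral fundamental cycle $c$ of $(N, \partial N)$ to its orientation-reversed copy $\overline{c}$ yields an integral fundamental cycle $c + \overline{c}$ of $D(N)$ with $|c + \overline{c}|_1 \leq 2 |c|_1$. Hence $\isv{D(N)} \leq 2 \cdot \isv{N, \partial N}$. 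Dividing by $d$ and taking the infimum over all finite connected coverings of $M$ yields the claim.

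The main technical subtlety lies in the bookkeeping for part~(1), since the number of connected components of $\partial N$ may exceed that of $\partial M$; the weighted degree identity $\sum_\alpha d_\alpha = d$ across the fibres over each boundary component is exactly what makes the summation collapse to a single factor of $d$ in the correct place. Part~(2) is essentially a direct integral refinement of Example~\ref{exa:double:sv}, the only additional observations being that the doubling construction is natural with respect to finite coverings and preserves connectedness.
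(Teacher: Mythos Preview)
Your proof is correct. Part~(2) matches the paper's argument exactly. For Part~(1), both you and the paper start from the integral boundary estimate $\isv{N,\partial N} \geq \isv{\partial N}/(n+1)$ applied to a finite covering, but the aggregation over boundary components differs: the paper fixes $\varepsilon>0$, picks a near-optimal covering $N_\varepsilon \to M$, and uses a pigeonhole (weighted averaging) step on the components $S_1,\dots,S_k$ of $\partial N_\varepsilon$ to extract a single $S_j$ with $\isv{S_j}/d_j < (n+1)(\stisv{M,\partial M}+\varepsilon)$, thereby witnessing the infimum defining $\stisv{\partial M}$. You instead lower-bound each $\isv{N_\alpha}$ by $d_\alpha\cdot\stisv{\partial_{i(\alpha)}M}$ and sum directly. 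Your summation is slightly more streamlined and handles disconnected $\partial M$ without modification (the paper's pigeonhole step is phrased for connected $\partial M$); the paper's argument, on the other hand, has the minor bonus of explicitly producing connected coverings of $\partial M$ that realise the bound up to any $\varepsilon$.
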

\begin{proof}
  \emph{Ad~1.} First recall that $\sv{N, \partial N}_\mathbb{Z} \geq
  \sv{\partial N}_\mathbb{Z} \slash (n+1)$ for every oriented compact
  connected $n$-manifold~$N$ with boundary~$\partial N$
  (Remark~\ref{rem:sv:with:int:coeff}).  Suppose that $\sv{M, \partial
    M}_\mathbb{Z}^\infty = T \in \, \R_{\geq 0}$, then for
  every~$\varepsilon > 0$ there exists an oriented compact connected
  finite covering~$N_\varepsilon$ of degree~$d_\varepsilon$ such that
  $$
  T \leq \frac{\sv{N_\varepsilon, \partial N_\varepsilon}_\mathbb{Z}}{d_\varepsilon} < T + \varepsilon \ .
  $$
  Hence, we also have
  $$
  \frac{1}{n+1} \cdot \frac{\sv{\partial N_\varepsilon}_\mathbb{Z}}{d_\varepsilon} < T + \varepsilon \ .
  $$
  Notice that the boundary~$\partial N_\varepsilon$ might consist of
  several different connected components~$S_1, \dots, S_k$ that
  cover~$\partial M$ with degrees $d_1, \dots, d_k$, respectively,
  such that $d_\varepsilon = \sum_{i = 1}^k d_i$.  By the pigeonhole
  principle, there exists a~$j \in \, \{1, \dots, k\}$ with
  $$\frac{1}{n+1} \cdot \frac{\sv{S_j}_\mathbb{Z}}{d_j} < T + \varepsilon \ ,$$
  because $\sum_{i=1}^k d_i/d_\varepsilon = 1$ and
  $$
  T + \varepsilon > \frac{1}{n+1} \cdot \frac{\isv{\partial N_\varepsilon}}{d_\varepsilon}
  = \sum_{i = 1}^k \frac{1}{n+1} \cdot \frac{\sv{S_i}_\mathbb{Z}}{d_\varepsilon}
  = \sum_{i = 1}^k \frac{d_i}{d_\varepsilon} \cdot \frac{1}{n+1} \cdot \frac{\sv{S_i}_\mathbb{Z}}{d_i} 
  \ .
  $$
  This shows that $\stisv{\partial M} \slash (n+1) \leq T +
  \varepsilon$. Letting $\varepsilon \to 0$ proves the claim.

  \emph{Ad~2.} If $(N, \partial N) \to (M, \partial M)$ is a finite
  covering of degree~$d$, then the induced map~$D(N) \to D(M)$ between
  the doubles is also a finite covering of degree~$d$. Moreover, by
  reflecting fundamental cycles (Example~\ref{exa:double:sv}), we have
  $\isv{D(N)} \leq 2 \cdot \isv{N, \partial N}$.
\end{proof}

\begin{rem}
  Let $(M,\partial M)$ be an oriented compact connected manifold with
  (possibly empty) boundary. Then
  Proposition~\ref{prop:sisv:relative:double} gives an alternative way
  to derive an Euler characteristic estimate for~$\stisv {M,\partial
    M}$ from the closed case. First, suppose that $n := \dim M$ is
  even.  In this case, $\chi(D(M)) = 2 \cdot \chi(M,\partial M)$.
  Hence, the closed case of Corollary~\ref{cor:chi:sisv} and the
  second part of Proposition~\ref{prop:sisv:relative:double} show that
  $$
  2 \cdot |\chi(M, \partial M)| = |\chi(D(M)| 
  \leq (n+1) \cdot \sv{D(M)}_\mathbb{Z}^\infty
  \leq 2 \cdot (n+1) \cdot \sv{M, \partial M}_\mathbb{Z}^\infty \ .
  $$
  Suppose now that $M$ has odd dimension~$n$. Then, we know that
  $\chi(\partial M) = 2 \cdot \chi(M)$.  Hence, by the closed case of
  Corollary~\ref{cor:chi:sisv} and the first part of
  Proposition~\ref{prop:sisv:relative:double}, we have
  \begin{align*}
    2 \cdot |\chi (M, \partial M)|
    & = 2 \cdot |\chi(M) - \chi(\partial M)| = |\chi(\partial M)| \\
    &\leq n \cdot \sv{\partial M}_\mathbb{Z}^\infty \leq n \cdot (n+1) \cdot \sv{M, \partial M}_\mathbb{Z}^\infty \ .
  \end{align*}
\end{rem}

\begin{example}
  For the following manifolds, the stable integral simplicial volume
  equals the classical simplicial volume; in particular, these
  examples satisfy \pref{p:svapprox} or \pref{p:svapproxb},
  respectively:

  \begin{enumerate}
  \item All oriented compact aspherical
    surfaces~\cite{vbc,kionkeloeh};
  \item All oriented compact aspherical $3$-manifolds with
    toroidal (or empty) boundary~\cite{FFL,FLMQ};
  \item All oriented closed connected generalised aspherical graph
    manifolds with residually finite fundamental group~\cite{FFL};
  \item All oriented closed aspherical manifolds with
    residually finite amenable fundamental group~\cite{FLPS};
  \item All oriented compact aspherical smooth manifolds with
    residually finite fundamental group admitting a non-trivial smooth
    $S^1$-action~\cite{FauserS1};
    \item All oriented compact aspherical smooth manifolds with
    residually finite fundamental group admitting an $F$-structure~\cite{LoehMoraschiniSauer};
  \item All oriented closed aspherical smooth manifolds with
    residually finite fundamental group admitting a regular circle
    foliation with finite holonomy groups~\cite{Camp-Corro};
    \item Every oriented closed aspherical manifold $M$ with
    residually finite fundamental group and $\amcat(M) \leq \dim(M)$~\cite{LoehMoraschiniSauer}.
    This applies e.g. to manifolds that are the total space of a fibre bundle 
    $M \to B$ with oriented closed connected fiber $N$ such that $\amcat(N) \leq \dim(M) \slash (\dim(B) +1)$
    and to manifolds of dimension $n \geq 4$ whose fundamental group $\Gamma$
    contains an amenable normal subgroup $A$ with $\cd_\mathbb{Z} (\Gamma \slash A) < n$.
  \end{enumerate}
\end{example}

\begin{rem}
  The arguments discussed in this section can be extended to principal
  ideal domains with norm bounded from below by~$1$.  Interesting
  examples of such rings include, for example, the finite fields with
  the trivial norm~\cite{Loehcoefficients}. In this setting, the proof
  of Proposition~\ref{prop:sisv:relative:double} also applies
  verbatim.
\end{rem}

\subsection{The dynamical version}

The Poincar\'e duality arguments also apply to the dynamical version
of the (integral) simplicial volume, i.e., to the integral foliated
simplicial volume:

\newcommand{\ifsv}[2][norm]{\!\csname #1l\endcsname\bracevert\!#2\!%
                            \csname #1r\endcsname\bracevert\!}

\begin{defi}[Integral foliated simplicial volume~\cite{gromovmetric,Sthesis,FFL}]
  Let $M$ be an oriented compact connected $n$-manifold with (possibly
  empty) boundary $\partial M$.
  \begin{itemize}
  \item If $\alpha = \pi_1(M) \actson (X,\mu)$ is a probability
    measure-preserving action on a standard Borel probability space,
    then we set
    \[ \ifsv {M,\partial M}^{\alpha}
    := \bigl\| [M,\partial M]^\alpha \|_1^\alpha,
    \]
    where $[M,\partial M]^\alpha \in H_n(M,\partial M
    ;L^\infty(X;\Z))$ denotes the image of the usual fundamental
    class~$[M,\partial M]_\Z$ under the inclusion of~$\Z$ into the
    \emph{twisted} coefficient module~$L^\infty(X;\Z)$. The
    norm~$|\cdot|_1^\alpha$ on the twisted chain complex is taken with
    respect to the $L^1$-norm on~$L^\infty(X;\Z)$.
  \item The \emph{(relative) integral foliated simplicial volume}
    of~$(M,\partial M)$ is defined as
    \[ \ifsv {M,\partial M} := \inf_{\alpha \in A(\pi_1(M))} \ifsv {M,\partial M}^\alpha,
    \]
    where $A(\pi_1(M))$ denotes the class of all probability
    measure-pre\-serv\-ing $\pi_1(M)$-actions on standard Borel
    probability spaces.
  \end{itemize}
\end{defi}

\begin{rem}
  For technical reasons, in the setting of manifolds with boundary, it
  is recommended to work with actions of the fundamental groupoid
  instead of the fundamental group~\cite{FFL,FLMQ}.
\end{rem}

\begin{prop}
  Let $(M,\partial M)$ be an oriented compact connected
  $n$-man\-i\-fold with (possibly empty) boundary $\partial M$. Then, for all~$k
  \in \N$,
  \[ \ltb k M \leq \ifsv {M,\partial M}.
  \]
  In particular, $\bigl|\chi(M,\partial M)\bigr| = \bigl|\chi(M)\bigr|
  \leq (n+1) \cdot \ifsv {M,\partial M}$.
\end{prop}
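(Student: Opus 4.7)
The argument mirrors that of Proposition~\ref{prop:betti}, with the normed principal ideal domain~$R$ replaced by the finite von Neumann algebra $\mathcal{N}(\alpha)$ associated to a measure-preserving action $\alpha \in A(\pi_1(M))$, and the integrality condition $|x|\geq 1$ replaced by a dimension-versus-$L^1$-norm inequality for cyclic submodules of $L^\infty(X;\Z)$. I will show $\ltb k M \leq \ifsv{M,\partial M}^\alpha$ for every $k$ and every fixed $\alpha$, so that the stated Betti number bound follows by taking the infimum over $\alpha$. First, I would recall the parametrised description
\[
\ltb k M \;=\; \dim_{\mathcal{N}(\alpha)} H_k\bigl(C_*(\widetilde{M};\Z)\otimes_{\Z\pi_1(M)} L^2(X,\mu)\bigr),
\]
which is the standard identification of $L^2$-Betti numbers in terms of the crossed product (equivalently, groupoid) von Neumann algebra.

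Next, invoke Poincaré--Lefschetz duality with $L^2(X)$-coefficients: cap product with the twisted fundamental class $[M,\partial M]^\alpha \in H_n(M,\partial M; L^\infty(X;\Z))$ induces isomorphisms of Hilbert $\mathcal{N}(\alpha)$-modules between absolute and relative (co)homology in complementary degrees. The decisive chain-level estimate is now as follows. Choose a representative $c = \sum_{j=1}^N f_j \sigma_j$ of $[M,\partial M]^\alpha$ with $f_j \in L^\infty(X;\Z)$ and $\sum_j \|f_j\|_{L^1} < \ifsv{M,\partial M}^\alpha + \varepsilon$. Then the cap-product map $\args \cap c$ factors through an $\mathcal{N}(\alpha)$-submodule whose $j$-th summand lies inside the cyclic submodule $\mathcal{N}(\alpha)\cdot f_j$. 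Decomposing $|f_j| = \sum_{m\geq 1}\chi_{E_{j,m}}$ into characteristic functions of level sets and using the elementary fact $\dim_{\mathcal{N}(\alpha)}(\mathcal{N}(\alpha)\cdot\chi_E) \leq \mu(E)$ yields
\[
\dim_{\mathcal{N}(\alpha)}\bigl(\mathcal{N}(\alpha)\cdot f_j\bigr) \;\leq\; \sum_m \mu(E_{j,m}) \;=\; \|f_j\|_{L^1}.
\]
Summing over $j$, letting $\varepsilon\to 0$, and infimising over $\alpha$ gives $\ltb k M \leq \ifsv{M,\partial M}$.

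The Euler characteristic estimate then follows immediately: by the $L^2$-Euler--Poincaré formula $\chi(M) = \sum_{k=0}^{n}(-1)^k \ltb k M$ and the Betti number bound just established,
\[
\bigl|\chi(M)\bigr| \;\leq\; \sum_{k=0}^n \ltb k M \;\leq\; (n+1)\cdot \ifsv{M,\partial M},
\]
while $|\chi(M,\partial M)| = |\chi(M)|$ is Remark~\ref{rem:eulerrel}. The main obstacle is executing Poincaré--Lefschetz duality cleanly at the chain level with the integral twisted coefficients $L^\infty(X;\Z)$, so that the $L^1$-bookkeeping on a cycle translates directly into an $\mathcal{N}(\alpha)$-dimension bound on the image of the cap-product; this was carried out in the closed case by Schmidt~\cite{Sthesis}, and the relative version requires analogous care in treating the boundary chain complex, in the spirit of~\cite{FFL,FLMQ}.
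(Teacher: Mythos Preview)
Your proposal is correct and follows the same approach as the paper: the paper's proof simply states that Schmidt's closed-case argument~\cite{Sthesis} carries over verbatim once one replaces Poincar\'e duality by twisted Poincar\'e--Lefschetz duality, and this is precisely what you have unpacked. One small caveat worth making explicit: the parametrised identification of~$\ltb k M$ with the~$\mathcal{N}(\alpha)$-dimension of the twisted homology is Gaboriau's theorem and requires~$\alpha$ to be essentially free; this is harmless because one may always pass to the product with an essentially free action without increasing~$\ifsv{M,\partial M}^\alpha$, but it should be stated before invoking the formula.
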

\begin{proof}
  This is a relative version of the $L^2$-Betti number estimate which
  was shown in the closed case by Schmidt~\cite{Sthesis} based on
  ideas of Gromov~\cite[p.~306]{gromovmetric}. When phrasing the proof
  in terms of $L^2$-Betti numbers of standard equivalence relations,
  literally the same proof as in the closed
  case~\cite[Theorem~6.4.5]{loeh_l2} can be applied to the twisted
  Poincar\'e--Lefschetz duality isomorphism.
\end{proof}

Many positive examples for Question~\ref{quest:gromov} have been
established using the integral foliated simplicial volume
(Section~\ref{subsec:relatedwork}) and most of the known computations
of the stable integral simplicial volume are based on ergodic
theoretic methods and the fact that
\[ \stisv {M,\partial M} = \ifsv{M,\partial M}^{\widehat{\pi_1(M)}}
\]
holds~\cite[Proposition~2.12]{FLMQ}\cite{LP,FLPS}, where
$\widehat{\pi_1(M)}$ denotes the dynamical system given by the
profinite completion of~$\pi_1(M)$.

A summary of computations of integral foliated simplicial volume and
of these ergodic theoretic methods can be found in the
literature~\cite[Chapter~6]{loeh_l2}.

\bibliographystyle{abbrv}
\bibliography{gromovbib}

\end{document}